\numberwithin{equation}{section} 
\definecolor{lgreen}{rgb}{0.0, 0.48, 0.0}
\definecolor{lpurple}{rgb}{0.48, 0.0, 0.48}
\definecolor{bblue}{rgb}{0.2, 0.4, 0.8}
\definecolor{bgreen}{rgb}{0.2, 0.6, 0.4}
\definecolor{bred}{rgb}{0.8, 0.4, 0.2}
\definecolor{bviolet}{rgb}{0.7, 0.2, 0.7}
\definecolor{blackred}{rgb}{0.6, 0.3, 0.3}
\definecolor{blackblue}{rgb}{0.3, 0.3, 0.6}
\definecolor{byellow}{rgb}{0.8, 0.6, 0.0}
\tikzset{
  treenode/.style = {align=center, inner sep=0pt, text centered,
    font=\sffamily},
  arnBleuPetit/.style = {treenode, circle, black, draw=bblue,
    fill=bblue!08,
    minimum width=0.8em, minimum height=0.5em
  },
  arnRougePetit/.style = {treenode, circle, black, draw=bred,
    fill=bred!20,
    minimum width=0.8em, minimum height=0.5em
  },
  arnVertPetit/.style = {treenode, circle, black, draw=bgreen,
    fill=bgreen!20,
    minimum width=0.8em, minimum height=0.5em
  },
  arn_n/.style = {treenode, circle, black, draw=bblue,
    text width=1.25em, very thick,
    fill=bblue!08},
  arn_r/.style = {treenode, circle, black, draw=bred,
    text width=1.15em, very thick,
    fill=red!08},
  arn_v/.style = {treenode, circle, bviolet, draw=bviolet,
    text width=1.5em, very thick,
    fill=bviolet!10},
  arn_g/.style = {treenode, circle, bgreen, draw=bgreen,
    text width=1.5em, very thick,
    fill=bblue!10},
}
\newcommandx*{\fromto}[7][3=black,4=black,5=0,6=0,7=.995,usedefault]{
    \path (#1) edge [thick,#4,bend right=#5,
    decoration={markings,mark=at position #7 with
    {\arrow[thick,#3, rotate=-#6]{>}}}, postaction={decorate}
    ] node {} (#2);
}
\newcommandx*{\sedge}[4][3=blackblue,4=0,usedefault]{
    \path (#1) edge [thick,#3,bend right=#4] node {} (#2);
}
\theoremstyle{definition}
\newtheorem{theorem}{Theorem}
\newtheorem{remark}{Remark}
\newtheorem{lemma}{Lemma}
\newtheorem{definition}{Definition}
\newtheorem{corollary}{Corollary}
\newtheorem{problem}{Problem}
\newtheorem{conjecture}{Conjecture}
\newtheorem{example}{Example}
\title{The birth of the contradictory component in random 2-SAT}
\author{Sergey Dovgal\thanks{%
The author is supported by the French project ANR project MetACOnc,
ANR-15-CE40-0014.
}}
\affil{
    \normalsize
  Institut Galilée,
  Université Paris 13,\\
  99 Avenue Jean Baptiste Clément 93430,\\
  Villetaneuse, France.\\
email: \texttt{dovgal-at-lipn.fr}
}
\date{\today}
\newcommand{\oset}[3][0ex]{%
  \mathrel{\mathop{#3}\limits^{
    \vbox to#1{\kern-2\ex@
    \hbox{$\scriptstyle#2$}\vss}}}}
\def \n {\overline}
\newcommand{\fn}[1]{%
    \oset[-.3ex]{%
        \rule{5pt}{1pt}%
    }{%
        #1%
    }%
}
\def \leq {\leqslant}
\def \geq {\geqslant}
\DeclareMathOperator{\Link}{\boldsymbol \Lambda}
\DeclareMathOperator{\fA}{\mathcal A}
\DeclareMathOperator{\fB}{\mathcal B}
\DeclareMathOperator{\fC}{\mathcal C}
\DeclareMathOperator{\eLink}{\Lambda}
\def \eA {A}
\def \eB {B}
\newcommand*{\eg}{\textrm{e.g.}\@\xspace}
\newcommand*{\ie}{\textrm{i.e.}\@\xspace}
\begin{document}

\maketitle

\begin{abstract}
    We prove that, with high probability, the contradictory components of a
    random 2-SAT formula in the subcritical phase of the phase transition have
    only 3-regular kernels.
    This follows
    from the relation between these kernels and the complex component of a random
    graph in the subcritical phase. This partly settles the question about the
    structural
    similarity between the phase transitions in 2-SAT and random graphs. As a
    byproduct, we describe the technique that allows to obtain a full asymptotic
    expansion of the satisfiability in the subcritical phase.
    We also obtain the distribution of the number of contradictory variables and
    the structure of the spine in the subcritical phase.

\noindent \textbf{keywords.} 2sat, satisfiability, complex component, cores,
kernels, spine, analytic combinatorics, generating functions
\end{abstract}

\section{Introduction}

\subsection{Phase transitions}

Phase transitions in random graphs, directed graphs, hypergraphs, random
geometric complexes, percolations, real-world networks and constraint
satisfiability problems (CSP) have become rapidly developing areas with a huge
spread of interconnected publications.

The term ``phase transition'' was mostly used by physicists, but
can be also applied to many combinatorial situations, when a small change of a
certain parameter results in a huge asymptotic change of some other parameter.
The original studies of the physical phase transitions, including Ising and Potts
model, considered graphs which formed certain regular lattices: from rectangular
ones to more complicated including maps on surfaces. Of close
relation is the percolation theory which is sometimes called ``the simplest
model displaying a phase transition''.  Friendly introductions into percolation
theory and Potts models can be found in a PhD Thesis~\cite{dommers2013spin}, a
survey paper~\cite{beaudin2010little} and in a lecture
course~\cite{duminil2017lectures}.

Apart from the existing practical applications in hardware in software
engineering (\eg cuckoo
hashing~\cite{devroye2003cuckoo,dietzfelbinger2010tight}), phase transitions in
graphs and digraphs are studied in their own right.  Of the most
recent references on random graphs and networks can be mentioned the
books~\cite{janson2011random,van2016random}.
Concerning the phase transition in \emph{directed} graphs,
the width of the transitions window has been described
in~\cite{luczak2009critical} and a description of the giant
core is given in~\cite{pittel2017birth}.
Recent studies reveal the properties of phase transitions in random hypergraphs
\cite{de2015phase,cooley2018largest}
and simplicial complexes~\cite{cooley2018vanishing}.

One of the most recent
surveys on satisfiability is a part of the excellent ``Art of Computer
Programming, Volume 4'' by
Knuth~\cite{knuth2015art}. The k-SAT problem has also played its role on the
intersection of theoretical computer science and theoretical physics. One of the
surprising connections is that several NP-complete decision problems can be
reformulated in terms of positivity of the ground state energy of an Ising model
and that the techniques like belief propagation and cavity method used in statistical physics can be also used to predict
the behaviour of random
formulae, see~\cite{monasson1997statistical,biroli2002phase,mezard2002analytic}.
At the same time, phase transitions in random CSP seem to be closely related to
their algorithmic complexity, see~\cite{achlioptas2008algorithmic}.
While the existence of satisfiability threshold and its value for k-SAT with \(
k \geq 3 \) remains an unsolved open problem, several advances have been made,
including a precise asymptotic location of the transition point for \( k \to \infty
\)~\cite{coja2016asymptotic}, phase transitions of \( k
\)-XORSAT~\cite{herve2011random,pittel2016satisfiability},
and more
generally, systems of linear equations over \( F_q
\)~\cite{ayre2017satisfiability},
improvements in \( k \)-colorability threshold~\cite{coja2013chasing}.

The 2-SAT problem (see~\cref{definition:cnf}) is the easiest case of the more general \( k \)-SAT problem
which is NP-complete for \( k \geq 3 \), and admits a linear-time algorithm for
\( k = 2 \)~\cite{aspvall1982linear}. However, already the problem of maximising
the number of satisfying assignments of the 2-SAT, known as MAX-2-SAT is known
to be NP-complete as well. The study of the phase transition in 2-SAT and MAX-2-SAT
culminated in the papers~\cite{bollobas2001scaling}
and~\cite{coppersmith2004random}, where the critical width has been determined.
Kim~\cite{kim2008finding} improved
the bounds and provided an exact constant for the subcritical case.
Since then, it has been questioned whether the structural similarities between
the transitions in graphs, digraphs and 2-SAT exist.
As an example, it is known that for random graphs \( G_{n,m} \) with \( n \)
vertices and \( m = \frac{n}{2}(1 + \mu n^{-1/3}) \) edges,
\[
    \mathbb P(\text{%
        \( G_{n,m} \) contains only trees and unicycles%
    })
    = 1 - \dfrac{5 + o(1)}{24 |\mu|^3}
    \quad
    \text{ as } \mu \to -\infty
    \text{ with } n
    ,
\]
for a random 2-SAT formula \( F_{n,m} \) with \( n \) variables and
\( m = n (1 + \mu n^{-1/3}) \) clauses,
\[
    \mathbb P(\text{%
        \( F_{n,m} \) is satisfiable%
    })
    = 1 - \dfrac{1 + o(1)}{16 |\mu|^3}
    \quad
    \text{ as } \mu \to -\infty
    \text{ with } n
    ,
\]
and for random directed graphs \( D_{n,m} \) with \( n \) vertices and \( m =
n(1 + \mu n^{-1/3}) \) oriented edges,
\[
    \mathbb P\left(\substack{
        \text{%
        every strong component in \( D_{n,m} \) is either}
        \\
        \text{a vertex or a cycle of
        length \( O(n^{1/3} \mu^{-1}) \)%
    }%
    }\right)
    \to 1
    \quad
    \text{ as } \mu \to -\infty
    \text{ with } n
    ,
\]
where \( o(1) \) goes to zero as \( |\mu| \ll n^{1/3} \) (the formula for random
graphs requires \( |\mu| \ll n^{1/12} \) and can be further improved to
\( |\mu|\ll n^{1/3} \), see~\cite{herve2011random}). The factor \( 5/24 \) is obtained by
adding the inverse numbers of automorphisms \( 1/12 \) and \( 1/8 \) of
the two possible cubic (\ie 3-regular) multigraphs with \( 3 \) edges and \( 2
\) vertices,
which appear as the only two possible cores at the point when a graph doesn't
anymore consist only of trees and unicycles,
see~\cref{subsection:sp:lemma:random:graphs}
and~\cref{remark:bicyclic:components} for an
explanation of this phenomenon.
A different viewpoint on the giant component and satifsfiability has been
proposed in~\cite{molloy2008does}.
Clearly, the structure of a critical implication digraph might not be similar to
that of a critical simple graph or a critical digraph, due to the presence of
certain symmetries, which have been emphasised in~\cite{kravitz2007random}.
Nevertheless, it still seems
to be possible to draw certain structural similarities between the models.

We follow the approach of \emph{analytic combinatorics} which was used to obtain
the structure of random graph at the point of the phase
transition~\cite{flajolet1989first,janson1993birth,flajolet2004airy}.
The approach makes use of the generating function technique which gives a very
clear structural vision of the corresponding combinatorial objects. In addition
to the aestethic benefits (see \eg a unifying approach for the upper bounds on
satisfiability threshold~\cite{puyhaubert2004generating}), it allows to give
very precise asymptotic descriptions, often accompanied by complete asymptotic
expansions. In particular, inside the transition window, the probability that a
random graph doesn't contain a complex component (\ie consists only of trees and
unicycles) has been expressed in terms of Airy function which has appeared in
many other contexts in analytic
combinatorics~\cite{banderier2001random,flajolet2004airy}.
Another interpretation, using the fact that Airy function is linked to the area
under a Brownian motion, was discovered by Aldous~\cite{aldous1997brownian}, see
also~\cite{addario2012continuum}.
The enumeration of unsatisfiable 2-SAT formulae is equivalent to
forbidding a certain contradictory pattern
(namely, the contradictory circuit, see~\cref{subsection:definitions}).
Of the closest to our approach here is~\cite{collet2018threshold} where the
authors use the analytic approach to study the containment problem for small
subgraphs.
Applying the analytic methods to the phase transitions of SAT-formulae and
directed graphs remains one of the important challenging problems.

The \emph{spine} of a 2-CNF (see~\cref{definition:contradictions})
has been introduced in~\cite{bollobas2001scaling}
as a useful tool for combinatorial analysis of the probability of
satisfiability inside the critical window. Originally, the spine is defined as
the set of variables that are forced to take the FALSE value in any satisfying
assignment.
Later, the spine has been shown to impact the complexity of the underlying decision
problems in a more general setting for various constraint satisfaction problems
including \( k \)-XORSAT, graph bipartition problem, random
3-coloring~\cite{solymosispine,istrate2005spines}.  In our analysis of
satisfiability we do not use the properties of the spine, but we find this
parameter interesting by itself. We obtain a new structural result about the
spine of a random formula.

\subsection{Definitions}
\label{subsection:definitions}

A \emph{simple graph} \( G = (V, E) \), \( E \subset
\{ \{x, y\} \mid x, y \in V,\ x \neq y \} \) is a graph
without loops and multiple edges, so that every
edge connects a set of two distinct vertices and for every pair of vertices
there is at most one edge connecting them. Contrary to a simple graph, a
\emph{multigraph} may contain an arbitrary number of loops and multiple edges.
By \( \mathcal G(n,m) \) we denote the set of all simple graphs with \( n \)
vertices and \( m \) edges. We say that a graph has \emph{size} \( n \) if it
contains \( n \) vertices.

A \emph{simple digraph} is a synonym for a \emph{simple directed graph} which we
define as a pair \( D = (V, E) \) of the set of vertices \( V \) and the set of
edges \( E \subset \{ (x, y) \mid x, y \in V,\ x \neq y \} \) such that every
directed edge \( x \to y \) is represented by
a pair of vertices \( (x, y) \);
a simple digraph contains no loops \( x \to x \), and no multiple edges;
at the same time, any two distinct vertices \( x \) and \( y \) can have
simultaneously both directed edges \( x \to y \) and \( y \to x \) between them.
An \emph{unoriented projection} of a simple digraph is a multigraph (possibly
containing double edges) which is obtained by dropping the orientations of the
edges.  We denote by \( \mathcal D(2n, m) \) the set of simple digraphs with \(
2n \) vertices and \( m \) oriented edges.

Below we give the definitions related to the 2-SAT model which contains the
formulae in \emph{2-conjunctive normal form} (2-CNF).

\begin{definition}[Variables, literals and clauses]
    \label{definition:cnf}
Consider  \( n \) \emph{Boolean variables}
\( \{x_1, x_2, \ldots x_n\} \), so that \( x_i \in \{0, 1\} \) for all \( i \in
\{1, \ldots, n\} \). The logical negation of \( x_i \) is denoted by \( \n x_i \).
Each of \( \{x_i, \n x_i\} \) is called a \emph{literal}, while the two literals
\( \{x_i, \n x_i\} \) refer to the same \emph{variable} \( x_i \).
We say that two literals \( \xi \) and \( \eta \) are
\emph{complementary} if \( \xi = \n \eta \).
Two literals \( \xi \) and \( \eta \) are said to be \emph{strictly distinct} if
their underlying variables are distinct.
The 2-\emph{clauses} are disjunctions of two
literals, corresponding to distinct Boolean variables, in other words, each
2-clause is of the form \( (\xi_j \lor \eta_j) \) where \( \xi_j \) and \( \eta_j
\) belong to the set of \( 2n \) possible literals
\( \{ x_1, \ldots, x_n, \n x_1, \ldots, \n x_n\} \).
We do not distinguish clauses obtained by a change of the order of variables
inside a disjunction, so \( (\xi_j \lor \eta_j) \equiv (\eta_j \lor \xi_j) \).
A \emph{2-CNF} is a conjunction of clauses,
\ie a formula of the form \( \land_{j = 1}^m (\xi_j \lor \eta_j) \) where
each of the clauses \( (\xi_j \lor \eta_j) \) is distinct for \( j \in \{ 1,
\ldots, m \} \). A formula is \emph{satisfiable} (SAT) if there exists variable
assignment yielding the truth value of the formula.
By \( \mathcal F(n, m) \) we denote the set of 2-CNF formulae with \( n \)
Boolean variables and \( m \) clauses.
\end{definition}

In different models, the
distinctness condition for clauses is not required, but we show that it is not essential for
the phase transition and for our techniques. More explicitly, we require for
technical reasons that in
a 2-CNF formula in our model the conditions
\ref{condition:no:double:edges}--\ref{condition:no:multigraphs} hold:
\begin{enumerate}[label=(C\arabic*)]
    \item \label{condition:no:double:edges}
        clauses \( (x_i \lor x_i) \) are not allowed;
    \item \label{condition:no:loops}
        clauses \( (x_i \lor \n x_i) \) are not allowed;
    \item \label{condition:no:multigraphs}
        all the clauses are distinct.
\end{enumerate}
Each of the conditions
\ref{condition:no:double:edges}--\ref{condition:no:multigraphs} may be
violated without changing the main results, see~\cref{remark:breaking:conditions}.

\begin{definition}[Implication digraphs]
Any 2-CNF with \( n \) Boolean variables and \( m \) clauses can be represented
in the form of an \emph{implication digraph} where every clause \( (x \lor y) \)
is replaced with two directed edges \( (\n x \to y) \) and \( (\n y \to x) \).

If there is a directed path from a vertex \( x \) to a vertex \( y \) in a
directed graph \( D \), we write \( x \rightsquigarrow y \).
In the case when \( D \) is an implication digraph,
we also say that a literal \( x \) \emph{implies} literal \( y \).
\end{definition}

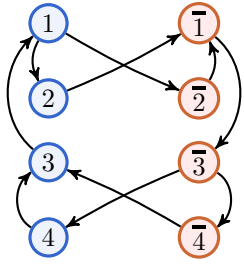
\begin{figure}[hbt]
\centering
\begin{tikzpicture}[>=stealth',thick, node distance=1.0cm] 
\draw
node[arn_n](1)  {\( 1\)}
node[arn_n](2)  [below of=1] {\( 2 \)}
node[arn_n](3)  [below of=2, node distance=.85cm] {\( 3 \)}
node[arn_n](4)  [below of=3] {\( 4 \)}
node[arn_r](1') [right of=1, node distance=2cm] {\( \fn 1 \)}
node[arn_r](2') [right of=2, node distance=2cm] {\( \fn 2 \)}
node[arn_r](3') [right of=3, node distance=2cm] {\( \fn 3 \)}
node[arn_r](4') [right of=4, node distance=2cm] {\( \fn 4 \)}
;
\fromto{1}{2'}[][][4]
\fromto{2}{1'}[][][4]
\fromto{3'}{4}[][][4]
\fromto{4'}{3}[][][4]
\fromto{1}{2}[][][ 30][10]
\fromto{2'}{1'}[][][ 30][10]
\fromto{1'}{3'}[][][-50][-5]
\fromto{3}{1}[][][-50][-5]
\fromto{4}{3}[][][-60][-15]
\fromto{3'}{4'}[][][-60][-15]
\end{tikzpicture}
\caption{\label{fig:implication:digraph}Implication digraph.}
\end{figure}

\begin{definition}[Spine, contradictory variables and circuits]
    \label{definition:contradictions}
    The \emph{spine} of a formula \( F \) (denoted \( \mathcal S(F) \)) is the
    set of literals that imply their complementary literals, \ie
    \[
        \mathcal S(F) := \{
            x \mid
            x \rightsquigarrow \n x
            \text{ in $F$}
        \}.
    \]
    A variable \( x \) is \emph{contradictory} if \( x \in \mathcal S(F) \) and
    \( \n x \in \mathcal S(F) \). The \emph{contradictory component}
    \( \mathcal C(F) \)
    of a
    formula \( F \) is then formed of all the contradictory variables, \ie
    \[
        \mathcal C(F) := \{
            x \mid
            x \rightsquigarrow \n x \rightsquigarrow x
            \text{ in $F$}
        \}
        .
    \]
    A \emph{contradictory circuit} is a distinguished directed path passing
    through \( x \), \( \n x \) and \( x \). It can be easily shown that any
    variable belonging to a contradictory circuit is also contradictory
    (possibly via a different contradictory circuit).
    It is well known and has been proven in~\cite{aspvall1982linear}, that a
    formula is satisfiable if and only if it contains a so-called
    \emph{contradictory circuit}, that is, there exists a literal \( x \) such that
    \( x \rightsquigarrow \n x \) and \( \n x \rightsquigarrow x \).
    Consequently, a formula is satisfiable if and only if its contradictory
    component is empty.
\end{definition}

\begin{example}
\cref{fig:implication:digraph} contains an implication digraph of a
formula \( (\n x_1 \lor \n x_2)(\n x_1 \lor x_2)(x_1 \lor \n x_3)(x_3 \lor
x_4)(x_3 \lor \n x_4) \). This formula is unsatisfiable because its implication
digraph containts a contradictory circuit \( 1 \to \n 2 \to \n 1 \to \n 3 \to 4
\to 3 \to 1\) passing through \( 1 \) and \( \n 1 \).
\end{example}

\begin{remark}
\label{remark:set:cscc}
    A contradictory component forms a set of strongly connected components such
    that there is no directed path between any two strongly connected
    components. Indeed, if \( \mathcal C_1 \) and \( \mathcal C_2 \) are such
    strongly connected components (where each variable is contradictory), and
    there is a path from \( x \in \mathcal C_1 \) to \( y \in \mathcal C_2 \),
    then, the complementary path \( \n y \rightsquigarrow \n x \) is also a part
    of the implication digraph, while \( \n y \in \mathcal C_2 \), \( \n x
    \in \mathcal C_1 \), since each of the strongly connected components is
    contradictory. Therefore, the presence of a directed path from \( \mathcal
    C_1 \) to \( \mathcal C_2 \) implies a directed path from \( \mathcal C_2 \)
    to \( \mathcal C_1 \) and they form a single strongly connected component.
\end{remark}

\subsection{Structure of the paper}
The paper is structures as follows. In~\cref{section:sum:representation} we
introduce the concept of \emph{sum-representation} which is used throughout the
paper. In this section, the main comparisons between simple graphs and
implication digraphs of \( 2 \)-SAT are drawn, and the \emph{excess} of a
contradictory component is introduced.
In~\cref{section:generating:functions} the classical symbolic method for
generating functions of simple graphs is explained, along with its variations for
weakly connected directed graphs. In this section, the concept of compensation
factor of a contradictory component is introduced.

In~\cref{section:asymptotic:analysis}, the actual asymptotic analysis is done.
The main results of this paper are formed
into~\cref{theorem:satisfiability,theorem:contradictory:variables,theorem:spine}.
In~\cref{theorem:satisfiability} we prove that in the subcritical phase of the
2-SAT, the kernels of the
contradictory components are typically cubic and that the asymptotic expansion
of the probability of satisfiability is linked to the compensation factors of
such cubic components.
In~\cref{theorem:contradictory:variables} we prove that the number
of contradictory variables, scaled by \( n^{1/3} |\mu|^{-1} \)
follows a mixture of Gamma laws, with the first nontrivial case being
Gamma(3) with a scale parameter \( \mu^{-1} n^{1/3} \) corresponding to the
contradictory component of minimal possible excess \( 1 \).
Finally, in~\cref{theorem:spine} we classify the spine literals according to
the multiplicities of their paths to the complementary variables, and prove that
a negligible part of the spine can be removed in such a way that the remaining
literals form a set of tree-like structures.

Several naturally arising open problems and
conjectures are described in~\cref{section:open:problems}, along with some
remarks on how the results presented in this paper could be potentially extended.
In the appendix, in~\cref{section:full:asymptotic:expansion} we resent a ``proof
of concept'' full asymptotic expansion of the saddle point lemma which allows in
principle to construct the full asymptotic expansion of the satisfiability.

\section{Sum-representation of implication digraphs}
\label{section:sum:representation}

For the study of the 2-SAT phase transition, it is convenient to consider
digraphs with an even number of vertices \( 2n \) with a special vertex
labelling convention. Instead of the labels
\( \{n+1, n+2, \ldots, 2n\} \) we conventionally use the synonyms
\( \{ \n 1, \n 2, \ldots, \n n \} \). Under this re-assignment, the nodes with
labels \( \{1, 2, \ldots, n\} \) correspond to Boolean literals \( \{x_1, x_2,
\ldots, x_n \} \), and the nodes with labels \( \{\n 1, \ldots, \n n\} \)
correspond to the negations of these literals \( \{\n x_1, \ldots, \n x_n \} \).
Since the complementary of \( \n x_i \) is \( x_i \), the same applies to the
labels, \( \n{\n i} = i \).

\begin{definition}[Conflicts in digraphs]
We define a \emph{complementary} of the edge \(x \to y\) as \(\n y \to \n x\).
We shall say that a pair of complementary edges in a directed graph form a
\emph{conflict}.
We say that a digraph is \emph{conflict-free} if there are no conflicting pairs
of edges inside it, \ie if no two edges are complementary.
\end{definition}

Note that in the model that we consider, a random 2-CNF formula contains
neither clauses of type \( (x \lor x) \) nor of \( (x \lor \n x) \). This
implies that the underlying implication digraph contains neither loops nor
multiple edges.

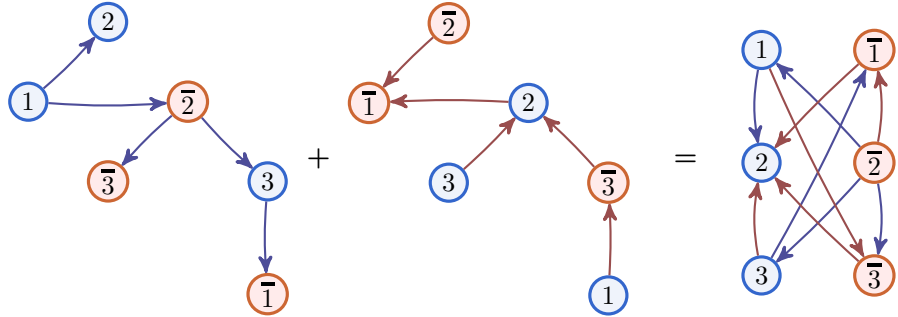
\begin{figure}[hbt]
\centering
\raisebox{-2cm}{
\begin{tikzpicture}[>=stealth',thick, node distance=1.5cm]
\draw
node[arn_n](2)                       {\(    2 \)}
node[arn_n](1)  [below left of  =2 ] {\(    1 \)}
node[arn_r](2') [below right of =2 ] {\( \fn 2 \)}
node[arn_r](3') [below left of  =2'] {\( \fn 3 \)}
node[arn_n](3)  [below right of =2'] {\(    3 \)}
node[arn_r](1') [below of       =3 ] {\( \fn 1 \)}
;
\fromto{1}{2}  [blackblue, very thick][blackblue, thick][4][1]
\fromto{1}{2'} [blackblue, very thick][blackblue, thick][4][1]
\fromto{2'}{3'}[blackblue, very thick][blackblue, thick][4][1]
\fromto{2'}{3} [blackblue, very thick][blackblue, thick][4][1]
\fromto{3}{1'} [blackblue, very thick][blackblue, thick][4][1]
\end{tikzpicture}
}
\( \boldsymbol + \)
\raisebox{-2cm}{
\begin{tikzpicture}[>=stealth',thick, node distance=1.5cm]
\draw
node[arn_r](2')                       {\( \fn 2 \)}
node[arn_r](1')  [below left of  =2' ] {\( \fn 1 \)}
node[arn_n](2) [below right of =2' ] {\(    2 \)}
node[arn_n](3) [below left of  =2] {\(    3 \)}
node[arn_r](3')  [below right of =2] {\( \fn 3 \)}
node[arn_n](1) [below of       =3' ] {\(    1 \)}
;
\fromto{2'}{1'}[blackred, very thick][blackred, thick][3][1]
\fromto{2}{1'} [blackred, very thick][blackred, thick][3][1]
\fromto{3}{2}  [blackred, very thick][blackred, thick][3][1]
\fromto{3'}{2} [blackred, very thick][blackred, thick][3][1]
\fromto{1}{3'} [blackred, very thick][blackred, thick][3][1]
\end{tikzpicture}
}
\(\quad \boldsymbol = \quad \)
\raisebox{-1.75cm}{
\begin{tikzpicture}[>=stealth',thick, node distance=1.5cm]
\draw
node[arn_n](1)               {\(    1 \)}
node[arn_n](2)  [below of=1] {\(    2 \)}
node[arn_n](3)  [below of=2] {\(    3 \)}
node[arn_r](1') [right of=1] {\( \fn 1 \)}
node[arn_r](2') [right of=2] {\( \fn 2 \)}
node[arn_r](3') [right of=3] {\( \fn 3 \)}
;
\fromto{1}{2}  [blackblue, very thick][blackblue, thick][10]
\fromto{2'}{1} [blackblue, very thick][blackblue, thick][3]
\fromto{2'}{3'}[blackblue, very thick][blackblue, thick][-10]
\fromto{2'}{3} [blackblue, very thick][blackblue, thick][-3]
\fromto{3}{1'} [blackblue, very thick][blackblue, thick][3]

\fromto{2'}{1'}[blackred, very thick][blackred, thick][10]
\fromto{1'}{2} [blackred, very thick][blackred, thick][3]
\fromto{3}{2}  [blackred, very thick][blackred, thick][-10]
\fromto{3'}{2} [blackred, very thick][blackred, thick][-3]
\fromto{1}{3'} [blackred, very thick][blackred, thick][3]
\end{tikzpicture}
}
\caption{\label{fig:sum:representation}Example of a sum representation digraph
\( G \) and its complementary \( \n G \) whose edge-union \( G + \n G \) gives
an implication digraph.}
\end{figure}
\begin{definition}[Sum-representation]
    A digraph \( G = (V, E) \) with \( 2n \) conventionally labelled vertices
    \( V = \{1, \cdots, n, \n 1, \cdots, \n n\} \) and \( |E| = m \) edges is
    called a \emph{sum-representation digraph} if it does not contain loops,
    multiple edges, edges of type \( x \to \n x \) and is conflict-free.
    The \emph{complementary} digraph \( \n
    G \) of a sum-representation digraph is obtained by replacing all the edges
    by their respective complementaries. We say that \( G \) is a
    \emph{sum-representation of} an implication graph \( G + \n G \) which is a
    digraph obtained by joining the
    sets of edges of \( G \) and \( \n G \) (see~\cref{fig:sum:representation}).
    Every implication digraph with \( 2m \) edges has \( 2^m \)
    sum-representations, since for each of the \( m \) clauses of the corresponding
    formula, \( 2 \) choices of complementary edges are available.
    We denote the set of all sum-representation digraphs with \( 2n \) vertices
    and \( m \) oriented edges as \( \mathcal D^\circ(2n,m) \).
\end{definition}

Instead of implication digraphs with \( 2n \) vertices and \( 2m \) oriented
edges we enumerate conflict-free digraphs with \( 2n \) vertices and \( m \)
edges. It is worth noticing that \( |\mathcal F(n,m)| = 2^{-m} |\mathcal
D^\circ(2n,m)| \). Note also that \( \mathcal D^\circ(2n,m) \subset \mathcal
D(2n,m) \).

An \emph{edge rotation} is a process of transformation of an edge \( x \to y \)
into its complementary edge \( \n y \to \n x \) (see~\cref{fig:edge:rotation}).
We say that \( \pi_1 \) is \emph{equivalent} to \( \pi_2 \) if it can be
obtained by a sequence of edge rotations. This means that \( \pi_1 \)
and \( \pi_2 \) are both sum-representation of the same implication digraph.

\begin{figure}[hbt]
\centering
\raisebox{-2cm}{
\begin{tikzpicture}[>=stealth',thick, node distance=1.5cm]
\draw
node[arn_n](2)                       {\(    2 \)}
node[arn_n](1)  [below left of  =2 ] {\(    1 \)}
node[arn_r](2') [below right of =2 ] {\( \fn 2 \)}
node[arn_r](3') [below left of  =2'] {\( \fn 3 \)}
node[arn_n](3)  [below right of =2'] {\(    3 \)}
node[arn_r](1') [below of       =3 ] {\( \fn 1 \)}
;
\fromto{1}{2}  [very thick][thick][4][1]
\fromto{1}{2'} [very thick][thick][4][1]
\fromto{2'}{3'}[very thick][thick][4][1]
\fromto{2'}{3} [very thick][thick][4][1]
\fromto{1'}{3} [bblue, very thick][bblue, ultra thick][4][1]
\end{tikzpicture}
}
\( \boldsymbol \Rightarrow \)
\raisebox{-2cm}{
\begin{tikzpicture}[>=stealth',thick, node distance=1.5cm]
\draw
node[arn_n](2)                       {\(    2 \)}
node[arn_n](1)  [below left of  =2 ] {\(    1 \)}
node[arn_r](2') [below right of =2 ] {\( \fn 2 \)}
node[arn_r](3') [below left of  =2'] {\( \fn 3 \)}
node[arn_n](3)  [below right of =2'] {\(    3 \)}
node[arn_r](1') [below of       =3 ] {\( \fn 1 \)}
;
\fromto{1}{2}  [very thick][thick][4][1]
\fromto{1}{2'} [very thick][thick][4][1]
\fromto{2'}{3'}[very thick][thick][4][1]
\fromto{2'}{3} [very thick][thick][4][1]
\fromto{3'}{1} [bred, very thick][bred, ultra thick][-4][-1]
\end{tikzpicture}
}
\caption{\label{fig:edge:rotation}Example of two equivalent sum-representations
    obtaine by one edge rotation.
An edge \( \n 1 \to 3 \) is replaced by its complementary \( \n 3 \to 1 \).
}
\end{figure}
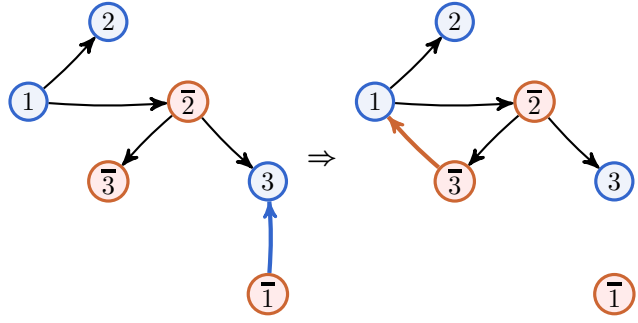

\subsection{The spine}
\label{subsection:sum:representation:spine}

Let \( y \) be a literal belonging to the spine of some formula \( F \in
\mathcal F(n,m) \), which means that \( y
\rightsquigarrow \n y \) in the implication digraph corresponding to \( F \).
Counting such literals \( y \) with the
multiplicities of the corresponding paths \( y \rightsquigarrow \n y \) gives a larger number
than just the cardinality of the spine, however, we are going to show that in
the subcritical phase, \ie when \( m = n(1 + \mu n^{-1/3}) \), \( \mu \to -\infty
\), it gives asymptotically the same result,
see~\cref{theorem:spine,corollary:spine}.
The inherent reason behind this is
that the majority of the spine components form certain tree-like structures
(see~\cref{fig:spine:tree}).

Let
us say that a path \( y \rightsquigarrow \n y \) is \emph{strictly distinct} if
all the vertices of the path, except \( y \) and \( \n y \), are pairwise
strictly distinct.
If \( y \rightsquigarrow \n y \), it is not always possible to find a strictly
distinct directed path from \( y \) to \( \n y \), but it is possible to split
it into a few sections, each strictly distinct.

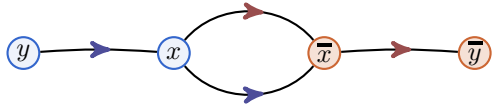
\begin{figure}[hbt]
    \centering
\begin{tikzpicture}[>=stealth',thick, node distance=1.0cm]
\draw
node[arnBleuPetit, minimum height = 1.2em,  thick](x)
                    at (0,0) {\( x \)}
node[arnRougePetit, minimum height = 1.2em, thick](x')
                    at (2,0) {\( \fn x \)}
node[arnBleuPetit, minimum height = 1.2em,  thick](y)
                    at (-2,0){\( y \)}
node[arnRougePetit, minimum height = 1.2em, thick](y')
                    at (4,0) {\( \fn y \)}
;
\fromto{x}{x'}     [ultra thick,blackred] [thick]
                    [-50][-5][0.6];
\fromto{x}{x'}     [ultra thick,blackblue][thick]
                    [50][5][0.6];
\fromto{x'}{y'}     [ultra thick,blackred][thick]
                    [-4][-1][0.6];
\fromto{y}{x}     [ultra thick,blackblue] [thick]
                    [-4][-1][0.6];
\end{tikzpicture}
\caption{\label{fig:spine:sd}A directed path from \( y \) to \( \n y \) is split
    into three strictly distinct sections.}
\end{figure}

\begin{lemma}[Minimal spinal paths]
    \label{lemma:minimal:spinal:path}
Let \( y \rightsquigarrow \n y \) inside an implication digraph. Then there
exists a literal \( x \) (not necessarily disctinct from \( y \)) such that \( y
\rightsquigarrow x \rightsquigarrow \n x \rightsquigarrow \n y \),
the paths \( y \rightsquigarrow x \) and \( x \rightsquigarrow \n x \) are
strictly distinct and do not intersect with each other.

As depicted in~\cref{fig:spine:sd},
each arrow represents a directed path
with strictly distinct literals. Since \( x \rightsquigarrow \n x \) is strictly
distinct, it has no intersection with its complementary path.
It is convenient to denote by \( x \rightsquigarrow_1 \n x \) and \( x
\rightsquigarrow_2 \n x \) the two complementary versions of the path \( x
\rightsquigarrow \n x \).
We shall call every such quadruple
\( (y \rightsquigarrow x, x \rightsquigarrow_1 \n x, x \rightsquigarrow_2 \n x, \n x
\rightsquigarrow \n y)\) a \emph{minimal spinal path}.
\end{lemma}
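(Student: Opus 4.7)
The plan is to extract $x$ from a shortest directed path $P: y = v_0 \to v_1 \to \cdots \to v_k = \n y$ in the implication digraph. In such a shortest path no literal can appear twice (otherwise one could shortcut, contradicting minimality), so $v_0, \ldots, v_k$ are pairwise distinct literals. Because $v_0$ and $v_k$ share an underlying variable, the set $\{ i : \exists\, j < i,\ v_i = \n{v_j} \}$ is nonempty; I let $i$ be its minimum, and pick the necessarily unique $j_0 < i$ with $v_{j_0} = \n{v_i}$. The candidate is $x := v_{j_0}$, $\n x = v_i$.

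The key observation is that, by the minimality of $i$, no two vertices among $v_0, \ldots, v_{i-1}$ are complementary; combined with the fact that they are pairwise distinct as literals, this means they have pairwise distinct underlying variables. Consequently the subpath $v_0 \to \cdots \to v_{j_0}$ realises $y \rightsquigarrow x$ with a strictly distinct interior, and the subpath $v_{j_0} \to \cdots \to v_i$ realises $x \rightsquigarrow \n x$ whose interior $v_{j_0+1}, \ldots, v_{i-1}$ is likewise strictly distinct (any repetition of a literal or appearance of a complementary pair inside would contradict either the shortest-path property or the choice of $i$; in particular no interior literal of this middle segment can share an underlying variable with $x$ or $\n x$). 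Since these two subpaths occupy disjoint index ranges in $P$, their vertex sets meet only at $x$, which furnishes the required non-intersection. The tail $v_i \to \cdots \to v_k$ supplies $\n x \rightsquigarrow \n y$, completing the decomposition announced in the statement.

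To produce the two complementary versions $x \rightsquigarrow_1 \n x$ and $x \rightsquigarrow_2 \n x$, I invoke the structural symmetry of implication digraphs: for every edge $a \to b$ on the strictly distinct path from $x$ to $\n x$ the complementary edge $\n b \to \n a$ is also present in the digraph, so reading the complementaries in reverse assembles a second directed path from $x$ to $\n x$ through the complementary interior literals. Strict distinctness guarantees that each underlying variable used in the interior of the original segment appears under a single polarity, so the two copies share no interior vertex. Taken together with $y \rightsquigarrow x$ and $\n x \rightsquigarrow \n y$, this yields the minimal spinal path.

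The only delicate point is the simultaneous guarantee of strict distinctness and non-intersection for the first two subpaths, which reduces to a single observation: up to the first index at which a complementary pair appears in $P$, the path visits literals whose underlying variables are all distinct. No computation beyond this indexing argument is required; the rest is bookkeeping on the shortest path and an application of the built-in edge symmetry of the implication digraph.
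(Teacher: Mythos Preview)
Your argument is correct and follows the same overall strategy as the paper: work along a simple path from $y$ to $\n y$ and locate the ``innermost'' complementary pair. Your indexing is in fact tidier than the paper's. The paper picks the minimal $k$ for which both $a_k$ and $\n{a_k}$ lie on the path \emph{and} the segment $a_k \rightsquigarrow \n{a_k}$ is strictly distinct, and then has to justify existence by an iterative descent (``keep choosing a smaller directed path\ldots''). By instead taking the minimal index $i$ at which a complementary of some earlier vertex first appears, you get strict distinctness of the prefix $v_0,\ldots,v_{i-1}$ in one stroke, and both $y\rightsquigarrow x$ and $x\rightsquigarrow\n x$ inherit it automatically; no iteration is needed. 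One small point: you obtain $\n x \rightsquigarrow \n y$ from the \emph{tail} $v_i\to\cdots\to v_k$ of $P$, which indeed exists but need not be strictly distinct. The paper instead takes $\n x \rightsquigarrow \n y$ to be the complementary of $y \rightsquigarrow x$ (exactly the edge-symmetry trick you already used for $x\rightsquigarrow_2\n x$), which guarantees strict distinctness of that leg as well and matches the quadruple pictured in the figure. The lemma as literally stated does not demand this, so your proof stands; but for consistency with how the minimal spinal path is used downstream, the complementary construction is the intended choice.
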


\begin{proof}
    Without loss of generality assume that the path \( y \rightsquigarrow \n y
    \) does not pass through the same vertex twice.
    Assume that the path \( y \rightsquigarrow \n y \) is formed as a sequence
    of edges \( y = a_0 \to a_1 \), \( a_1 \to a_2 \), \ldots, \( a_{\ell - 1}
    \to a_\ell = \n y \). If the path is strictly distinct, we can set \( y = x
    \) and the lemma is proven. Otherwise we choose \( x = a_k
    \) where \( k \) is the minimal index such that both \( a_k \) and
    \( \n a_k \) belong to the path \( y \rightsquigarrow \n y \), and the path
    \( x \rightsquigarrow \n x \) is strictly distinct. It is always possible to
    choose such an index: if the original path is not strictly distinct, we keep
    choosing a smaller directed path connecting some literal \( x \) and its
    complementary, until we obtain a path of length at most \( 2 \) for which
    the statement is obviously true since the implication digraph does not
    contain loops and multiple edges.
    Note that under such a choice, the path \( y \rightsquigarrow \n x \) is
    also strictly distinct (except for the vertices \( x \) and \( \n x \)),
    because otherwise it is possible to choose a smaller \( k \). The
    corresponding part \( \n x \rightsquigarrow \n y \) can be constructed as a
    complementary of the path \( y \rightsquigarrow x \).
\end{proof}

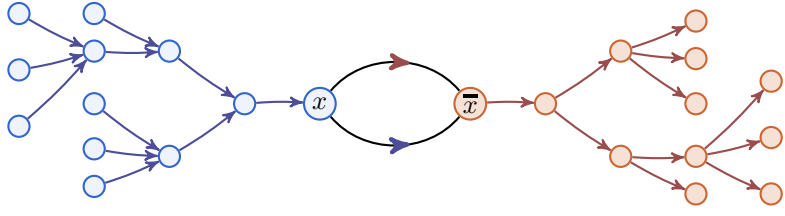
\begin{figure}[hbt]
\centering
\begin{tikzpicture}[>=stealth',thick, node distance=1.0cm]
\draw
node[arnBleuPetit, minimum height = 1.2em,  thick](x)
                    at (0,0) {\( x \)}
node[arnRougePetit, minimum height = 1.2em, thick](x')
                    at (2,0) {\( \fn x \)}
node[arnBleuPetit](q1)  at (-1,0) {}
node[arnBleuPetit](q11)  at ($(q1) + (-1,-0.7)$) {}
node[arnBleuPetit](q111)  at ($(q11) + (-1,-0.4)$) {}
node[arnBleuPetit](q112)  at ($(q11) + (-1,0.1)$) {}
node[arnBleuPetit](q113)  at ($(q11) + (-1,+0.7)$) {}
node[arnBleuPetit](q12)  at ($(q1) + (-1,0.7)$) {}
node[arnBleuPetit](q121)  at ($(q12) + (-1,+0.5)$) {}
node[arnBleuPetit](q122)  at ($(q12) + (-1,0)$) {}
node[arnBleuPetit](q1221)  at ($(q122) + (-1,-1)$) {}
node[arnBleuPetit](q1222)  at ($(q122) + (-1,-0.25)$) {}
node[arnBleuPetit](q1223)  at ($(q122) + (-1,0.5)$) {}
node[arnRougePetit](w1) at (3,0)  {}
node[arnRougePetit](w11)   at ($(w1)   - (-1,-0.7)$) {}
node[arnRougePetit](w111)  at ($(w11)  - (-1,-0.4)$) {}
node[arnRougePetit](w112)  at ($(w11)  - (-1,0.1)$) {}
node[arnRougePetit](w113)  at ($(w11)  - (-1,+0.7)$) {}
node[arnRougePetit](w12)   at ($(w1)   - (-1,0.7)$) {}
node[arnRougePetit](w121)  at ($(w12)  - (-1,+0.5)$) {}
node[arnRougePetit](w122)  at ($(w12)  - (-1,0)$) {}
node[arnRougePetit](w1221) at ($(w122) - (-1,-1)$) {}
node[arnRougePetit](w1222) at ($(w122) - (-1,-0.25)$) {}
node[arnRougePetit](w1223) at ($(w122) - (-1,0.5)$) {}
;
\fromto{x}{x'}        [ultra thick,blackred] [thick][-50][-5][0.6];
\fromto{x}{x'}        [ultra thick,blackblue][thick][50][5][0.6];
\fromto{x'}{w1}       [thick][blackred][-4][-1];
\fromto{q1}   {x}     [thick][blackblue][-4][-1];
\fromto{q11}  {q1}    [thick][blackblue][4][1];
\fromto{q111} {q11}   [thick][blackblue][4][1];
\fromto{q112} {q11}   [thick][blackblue][4][1];
\fromto{q113} {q11}   [thick][blackblue][4][1];
\fromto{q12}  {q1}    [thick][blackblue][4][1];
\fromto{q121} {q12}   [thick][blackblue][4][1];
\fromto{q122} {q12}   [thick][blackblue][4][1];
\fromto{q1221}{q122}  [thick][blackblue][4][1];
\fromto{q1222}{q122}  [thick][blackblue][4][1];
\fromto{q1223}{q122}  [thick][blackblue][4][1];
\fromto{w1}   {w11}   [thick][blackred][4][1];
\fromto{w11}  {w111}  [thick][blackred][4][1];
\fromto{w11}  {w112}  [thick][blackred][4][1];
\fromto{w11}  {w113}  [thick][blackred][4][1];
\fromto{w1}   {w12}   [thick][blackred][4][1];
\fromto{w12}  {w121}  [thick][blackred][4][1];
\fromto{w12}  {w122}  [thick][blackred][4][1];
\fromto{w122} {w1221} [thick][blackred][4][1];
\fromto{w122} {w1222} [thick][blackred][4][1];
\fromto{w122} {w1223} [thick][blackred][4][1];
\end{tikzpicture}
\caption{\label{fig:spine:tree}For every literal \( y \) in a
tree-like spine structure, there is a unique path \( y \rightsquigarrow \n y \).}
\end{figure}

Let \( \sigma \) be the
random variable denoting the cardinality of the spine in a random formula
\( F \in \mathcal F(n,m) \). Then, since the number of sum-representations of a
single implication digraph with \( 2m \) edges is \( 2^m \),
the expected value of \( \sigma \) can be expressed as
\[
    \mathbb E \sigma = \dfrac{
        \left|\left\{
            (G, x) \mid
                G \in \mathcal D^\circ(2n, m)
                ,\
            \text{$x \rightsquigarrow \n x$ in $G+\n G$}
        \right\}\right|
    }{
        | \mathcal D^\circ(2n, m) |
    }
    .
\]
Since the spine literals can be characterised as the literals for which there
\emph{exists} a path connecting a literal and its complementary, the counting of
such literals can be handled though inclusion-exclusion.
Accordingly,
\begin{align*}
    \mathbb E \sigma &=
    \dfrac{
        \left|\left\{
            (G, y, p) \ | \
                G \in \mathcal D^\circ(2n,m)
                ;\
            p \text{ is a minimal spinal path \( y \rightsquigarrow \n y \)
                in \( G + \n G \)}
        \right\}\right|
    }{
        | \mathcal D^\circ(2n, m) |
    }
    \\
    & -
    \dfrac{
        \left|\left\{
            (G, y, p_1, p_2) \ | \
                G \in \mathcal D^\circ(2n,m)
                ;\
                p_1,\text{ } p_2 \text{ are distinct }
                \text{minimal spinal paths \( y \rightsquigarrow \n y \) in \( G
                + \n G \) }
        \right\}\right|
    }{
        | \mathcal D^\circ(2n, m) |
    }
    + \cdots
    .
\end{align*}

Consider a pattern \( p \in G \), \( G \in \mathcal D^\circ(2n, m) \) consisting
of two paths \( y \rightsquigarrow x \), \( x \rightsquigarrow \n x \), where
all the literals on the two paths are pairwise stictly distinct (except for
\( x \) and \( \n x \)). Suppose that the pattern \( p \) has total length \(
\ell \).
Then, among \( 2^m \) equivalent sum-representations of \( G \),
\( 2^{m-\ell} \) sum-representations contain the
pattern \( p \) unaltered.
Among \( 2^\ell \) possible combinations of edge
rotations of one of the \( \ell \) edges of \( p \) only, there are exactly \( 2
\) possibilities that result in the same type of pattern, the second one
obtained by converting the path \( x \rightsquigarrow \n x \) into its
complementary.

Therefore, the number of sum-representation digraphs with a distinguished
pattern \( p = y \rightsquigarrow x \rightsquigarrow \n x \) of length \( \ell \)
counted with multiplicity \( 2^\ell \) enumerates (with
multiplicity 2-to-1) implication digraphs with a distinguished
minimal spinal path. This allows us to rewrite the first summand of \( \mathbb E
\sigma \) as
\begin{align*}
    &
    \dfrac{
        \left|\left\{
            (G, y, p) \ | \
                G \in \mathcal D^\circ(2n,m)
                ;\
            p \text{ is a minimal spinal path \( y \rightsquigarrow \n y \)
                in \( G + \n G \)}
        \right\}\right|
    }{
        | \mathcal D^\circ(2n, m) |
    }
    \\ &
    =
    \dfrac12
    \cdot
    \dfrac{
        \sum_{\ell}
        2^\ell
        \left|\left\{
            (G, y, x, p) \ | \
                G \in \mathcal D^\circ(2n,m)
                ;\
                p \text{ is a distinguished pattern \( y \rightsquigarrow x
                    \rightsquigarrow \n x \)
                    of length \( \ell \)
                in \( G \)}
        \right\}\right|
    }{
        | \mathcal D^\circ(2n, m) |
    }
    .
\end{align*}
All the subsequent summands of \( \mathbb E \sigma \) can be rewritten in the same manner as well, though it
requires considering several cases for the mutual configuration of \( p_1 \) and
\( p_2 \) and using possibly different factors instead of \( \frac12 \) for
the multiplicities.

\subsection{Classifying the contradictory components according to their excesses}
Recall that contradictory circuits and variables are defined
in~\cref{definition:contradictions}.
If a variable \( x \) is contradictory, it can belong to multiple contradictory
circuits at the same time. When the complexity of a contradictory component
increases, the number of circuits simultaneously contaning a given
variable can grow
exponentially on its excess. However, we show that the excess grows very slowly
in the subcritical phase of the 2-SAT transition, so this technique allows to
obtain some structural properties before the number of multiplicities blows up.

Similarly to the excess of a complex component in a simple graph which equals to
the number of its edges minus the number of its vertices, we introduce the
excess of a contradictory graph.
(See~\cref{fig:cscc:excess:one:second,fig:cscc:excess:one,fig:cscc:excess:two,fig:unavoidable:cscc}
for different contradictory components of excess \( 1 \) to \( 2 \).)

\begin{definition}[Contradictory component and its excess]
    \label{definition:contradictory:component}
    We call a digraph \( D \) with \( 2n \) conventionally labelled vertices a
    \emph{contradictory digraph} or a
    \emph{contradictory component} if for its every edge \( e \in D \) its
    complementary edge \( \n e \) is also in \( D \), and every vertex \( x \in
    D \) implies its complementary: \( x \rightsquigarrow \n x \).  The
    \emph{excess} of a contradictory component is defined to be equal to the
    difference between the number of its edges and vertices divided by \( 2 \).
    The excess of an empty graph is defined to be zero.
\end{definition}

\begin{figure}[hbt]
    \begin{minipage}[t]{0.24\textwidth}
\centering
\raisebox{1cm}{
\begin{tikzpicture}[>=stealth',thick, node distance=1.0cm]
\draw
node[arnBleuPetit, minimum height = 1.2em, thick](x)
                    at (0,0)           {\( x \)}
node[arnRougePetit, minimum height = 1.2em, thick](x')
                    at (2.5,0)           {\( \fn x \)}
;
\fromto{x}{x'}     [ultra thick,blackred] [thick]
                    [-30][-5][0.6];
\fromto{x}{x'}     [ultra thick,blackblue][thick]
                    [-75][-5][0.6];
\fromto{x'}{x}     [ultra thick,blackred] [thick]
                    [-30][-5][0.6];
\fromto{x'}{x}     [ultra thick,blackblue][thick]
                    [-75][-5][0.6];
\end{tikzpicture}
}
\caption{
\label{fig:cscc:excess:one:second}
First possible contradictory component of excess \( 1 \).
}
\end{minipage}\hfill
\begin{minipage}[t]{0.24\textwidth}
\centering
\begin{tikzpicture}[>=stealth',thick, node distance=1.0cm]
\draw
node[arnBleuPetit, minimum height = 1.2em, thick](x)
                    at (0,0)           {\( x \)}
node[arnRougePetit, minimum height = 1.2em, thick](x')
                    at (2.5,0)           {\( \fn x \)}
node[arnBleuPetit, minimum height = 1.2em, thick](y)
                    at (2.5,-2.5)           {\( y \)}
node[arnRougePetit, minimum height = 1.2em, thick](y')
                    at (0,-2.5)           {\( \fn y \)}
;
\fromto{x}{x'}     [ultra thick,blackred] [thick]
                    [-50][-5][0.6];
\fromto{x}{x'}     [ultra thick,blackblue][thick]
                    [50][5][0.6];
\fromto{x'}{y}     [ultra thick,blackred] [thick]
                    [-4][-1][0.6];
\fromto{y'}{x}     [ultra thick,blackblue][thick]
                    [-4][-1][0.6];
\fromto{y}{y'}     [ultra thick,blackred] [thick]
                    [-50][-5][0.6];
\fromto{y}{y'}     [ultra thick,blackblue][thick]
                    [50][5][0.6];
\end{tikzpicture}
\caption{
\label{fig:cscc:excess:one}
Second possible contradictory component of excess \( 1 \).
}
\end{minipage}\hfill
\begin{minipage}[t]{0.24\textwidth}
    \centering
\begin{tikzpicture}[>=stealth',thick, node distance=1.0cm]
\draw
node[arnRougePetit, minimum height = 1.2em, thick](y')
                    at (22.5    :  2)           {\( \fn y \)}
node[arnBleuPetit, minimum height = 1.2em, thick](w)
                    at (22.5*3  :  2)           {\( w \)}
node[arnBleuPetit, minimum height = 1.2em, thick](z)
                    at (22.5*5  :  2)           {\( z \)}
node[arnBleuPetit, minimum height = 1.2em, thick](y)
                    at (22.5*7  :  2)           {\( y \)}
node[arnBleuPetit, minimum height = 1.2em, thick](x)
                    at (22.5*9  :  2)           {\( x \)}
node[arnRougePetit, minimum height = 1.2em, thick](z')
                    at (22.5*11 :  2)           {\( \fn z \)}
node[arnRougePetit, minimum height = 1.2em, thick](w')
                    at (22.5*13 :  2)           {\( \fn w \)}
node[arnRougePetit, minimum height = 1.2em, thick](x')
                    at (22.5*15 :  2)           {\( \fn x \)}
;
\fromto{x}{y}      [ultra thick,blackblue][][-4][-1][0.6];
\fromto{y}{z}      [ultra thick,blackblue][][-4][-1][0.6];
\fromto{z}{w}      [ultra thick,blackblue][][-4][-1][0.6];
\fromto{w}{y'}     [ultra thick,blackblue][][-4][-1][0.6];

\fromto{y'}{x'}    [ultra thick,blackred] [][-4][-1][0.6];
\fromto{x'}{z}     [ultra thick,blackred] [][-4][-1][0.6];
\fromto{w}{x}      [ultra thick,blackred] [][-4][-1][0.6];
\fromto{z'}{y'}    [ultra thick,blackred][][4][1][0.6];

\fromto{y}{w'}     [ultra thick,blackred][][4][1][0.6];
\fromto{w'}{z'}    [ultra thick,blackred][][-4][-1][0.6];
\fromto{z'}{x}     [ultra thick,blackblue] [][-4][-1][0.6];
\fromto{x'}{w'}    [ultra thick,blackblue] [][-4][-1][0.6];
\end{tikzpicture}
    \caption{\label{fig:unavoidable:cscc}A minimal contradictory
    component of excess \( \frac{12 - 8}{2} = 2 \).}
    \end{minipage}\hfill
    \begin{minipage}[t]{0.24\textwidth}
        \centering
\raisebox{.2cm}{
\begin{tikzpicture}[>=stealth',thick, node distance=1.0cm]
\draw
node[arnBleuPetit, minimum height = 1.2em, thick](x)
                    at (0,0)           {\( x \)}
node[arnRougePetit, minimum height = 1.2em, thick](x')
                    at (2,0)           {\( \fn x \)}

node[arnBleuPetit, minimum height = 1.2em, thick](z)
                    at (-.1,-1)           {\( z \)}
node[arnRougePetit, minimum height = 1.2em, thick](z')
                    at (2.1,-1)           {\( \fn z \)}

node[arnBleuPetit, minimum height = 1.2em, thick](w)
                    at (1,0.6)           {\( w \)}
node[arnRougePetit, minimum height = 1.2em, thick](w')
                    at (1,-0.6)           {\( \fn w \)}

node[arnBleuPetit, minimum height = 1.2em, thick](y)
                    at (2,-2)           {\( y \)}
node[arnRougePetit, minimum height = 1.2em, thick](y')
                    at (0,-2)           {\( \fn y \)}
;
\fromto{x}{w}      [very thick,blackblue][thick][-20][-5][0.6];
\fromto{w'}{x'}    [very thick,blackred] [thick][-20][-5][0.7];
\fromto{x}{w'}     [very thick,blackred] [thick][50][10][0.7];
\fromto{w}{x'}     [very thick,blackblue][thick][-50][-10][0.6];

\fromto{x'}{z'}    [very thick,blackred] [thick][-4][-1][0.7];
\fromto{z'}{y}     [very thick,blackred] [thick][-4][-1][0.7];
\fromto{y'}{z}     [very thick,blackblue][thick][-4][-1][0.7];
\fromto{z}{x}      [very thick,blackblue][thick][-4][-1][0.7];

\fromto{y}{y'}     [very thick,blackred] [thick][-50][-5][0.6];
\fromto{y}{y'}     [very thick,blackblue][thick][50][5][0.6];

\fromto{z}{w}      [very thick,blackblue]
                   [thick,looseness = 1.6][-100][-5][0.6];
\fromto{w'}{z'}    [very thick,blackred] [thick][20][5][0.6];
\end{tikzpicture}
}
\caption{\label{fig:cscc:excess:two}Contradictory component of excess \( 2 \)
which is not minimal.}
    \end{minipage}
\end{figure}

In contrast with the phase transition of simple graphs, where the whole
structure of the graph is known with high probability, no such information is
available for the phase transition of 2-SAT yet.
On a positive side, Kim has obtained the number of variables and clauses in a
core of a random formula. In this work, we focus on the contradictory component
only.

In order to access the
probability of satisfiability, we use the \emph{inclusion-exclusion method}.
We consider \emph{minimal} contradictory components which are contradictory
implication digraphs that do not have any proper contradictory subgraphs.
Two examples of minimal contradictory components of excess \( 1 \) and \( 2 \)
are given in \cref{fig:cscc:excess:one,fig:unavoidable:cscc}, the arrows play
the role of strictly distinct paths. \cref{fig:cscc:excess:two} represents a
contradictory strongly component of excess \( 2 \) which is not minimal.

\begin{lemma}
    \label{lemma:pie:sat}
Let \( \xi \) denote the number of minimal contradictory components in a random
implication digraph corresponding to a random formula \( F \in \mathcal F(n,m)
\), counted with multiplicities and with possible overlappings.
Then, the probability of satisfiability of a random formula \( F \in \mathcal F(n,m) \)
can be expressed using the principle
of inclusion-exclusion:
\begin{equation}
    \mathbb P(F \in \mathcal F(n,m)\text{ is SAT}) = 1 - \mathbb E_{n,m} \xi
    + \dfrac{1}{2!} \mathbb E_{n,m} \xi(\xi - 1)
    - \dfrac{1}{3!} \mathbb E_{n,m} \xi(\xi - 1)(\xi - 2)
    + \cdots.
\end{equation}
\end{lemma}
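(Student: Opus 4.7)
The plan is a direct application of the Bonferroni / Jordan inclusion-exclusion identity, organized in three short steps.

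First, I would reduce the satisfiability probability to $\mathbb{P}(\xi = 0)$. By the Aspvall-Plass-Tarjan characterization stated in~\cref{definition:contradictions}, $F$ is unsatisfiable iff its implication digraph carries a contradictory circuit. Any implication digraph containing a contradictory circuit contains a minimal contradictory subgraph (pass to a smaller contradictory subgraph whenever a proper one exists, which terminates in finitely many steps), and conversely every minimal contradictory component, being itself a contradictory digraph in the sense of~\cref{definition:contradictory:component}, carries a contradictory circuit. Hence $F$ is SAT iff no minimal contradictory component embeds into the implication digraph. Since $\xi$ counts such embeddings with multiplicities and possible overlappings, the event $\{\xi = 0\}$ coincides with the SAT event regardless of the exact counting convention, as multiplicities only inflate positive counts and cannot turn a count into zero.

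Next, I would apply the elementary combinatorial identity
\[
    \mathbf{1}[\xi = 0] \;=\; (1-1)^\xi \;=\; \sum_{k=0}^{\infty}(-1)^k\binom{\xi}{k},
\]
valid for any nonnegative integer $\xi$, the sum being finite almost surely since $\binom{\xi}{k}$ vanishes for $k > \xi$. Taking expectation commutes with the sum by linearity (no convergence issue, only finitely many terms are nonzero for each realization), and rewriting $\binom{\xi}{k} = \xi(\xi-1)\cdots(\xi-k+1)/k!$ yields exactly the claimed series for $\mathbb{P}(F \in \mathcal F(n,m)\text{ is SAT})$.

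The only delicate point is the first step: one has to confirm that the counting convention for $\xi$ is faithful to the dichotomy ``empty vs.\ nonempty contradictory component'', i.e., that allowing multiplicities and overlappings cannot spuriously move $\xi$ from $0$ to a positive value or the reverse. This is immediate because both multiplicities and overlaps are nonnegative integer contributions to the count. Beyond this bookkeeping check, no genuine analytic or combinatorial obstacle arises; the identity is purely formal once Step 1 is granted.
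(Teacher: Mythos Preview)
Your proposal is correct and follows essentially the same inclusion--exclusion route as the paper: reduce satisfiability to the event $\{\xi=0\}$ and then expand $\mathbb P(\xi=0)$ as an alternating sum of factorial moments. The only cosmetic difference is that you invoke the pointwise identity $\mathbf 1[\xi=0]=(1-1)^\xi=\sum_k(-1)^k\binom{\xi}{k}$ and take expectations, whereas the paper writes $\xi=\sum_i\mathbf 1_{E_i}$ for the indicator events $E_i$ and applies the union form of inclusion--exclusion before recognising $\sum_{i_1<\cdots<i_k}\mathbb P(E_{i_1}\wedge\cdots\wedge E_{i_k})=\tfrac{1}{k!}\mathbb E\,\xi(\xi-1)\cdots(\xi-k+1)$.
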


\begin{proof}
A formula is satisfiable if and only if
the contradictory component is empty, \ie
the number of minimal contradictory components is
equal to zero.
Let \( N \) denote the total possible number of subgraphs in a digraph with \( n
\) vertices, and let \( E_i \) denote the event that \( i \)th subgraph forms a
minimal contradictory component. Using de~Morgan's rule and the
inclision-exclusion principle, we obtain
\[
    \mathbb P(\xi = 0) =
    \mathbb P(\n{E_1} \land \cdots \land \n{E_N})
    =
    1 - \mathbb P(E_1 \lor \cdots \lor E_N)
    =
    1 - \sum_{i = 1}^N \mathbb P(E_i)
    + \sum_{i<j} \mathbb P(E_i \land E_j)
    - \cdots
    .
\]
Finally, we note that
\[
    \sum_{i_1 < \ldots < i_k}
        \mathbb P(E_{i_1} \land \cdots E_{i_k})
    =
        {N \choose k} \mathbb P(E_1 \land \cdots \land E_k)
    =
        \dfrac{1}{k!} \mathbb E_{n,m}
        \xi(\xi - 1) \cdots (\xi - k+ 1)
        ,
\]
which finishes the proof.
\end{proof}

The term \( \mathbb E \xi(\xi-1) \) denotes the expected number of pairs of
minimal contradictory components, and requires going through different possible
cases of their mutual configuration. Further terms will provide even more
complicated combinatorial structures, but on the asymptotic level, they will all
appear to be negligible in the subcritical phase of the 2-SAT phase transition.

\begin{example}
    \label{example:overlap}
\cref{fig:cscc:excess:two}
    representing a contradictory component of excess \( 2 \) can be also
    considered as a pair of contradictory components each of excess \( 1 \): the
    first one being \( x \rightsquigarrow w \rightsquigarrow \n x
    \rightsquigarrow \n z \rightsquigarrow y \rightsquigarrow \n y
    \rightsquigarrow z \rightsquigarrow x \) with a double sequence \( y
    \rightsquigarrow \n y \) and a mirror path \( x \rightsquigarrow \n w
    \rightsquigarrow \n x \), and the second one being described as
    \( z \rightsquigarrow w \rightsquigarrow \n x \rightsquigarrow \n z
    \rightsquigarrow y \rightsquigarrow \n y \rightsquigarrow z \) with a double
    path \( y \rightsquigarrow \n y \) and a complementary sequence \( z
    \rightsquigarrow x \rightsquigarrow \n w \rightsquigarrow \n z\).
    In such a way, the first component of excess \( 1 \) is obtained by dropping
    the pair \( x \rightsquigarrow w \) and \( \n w \rightsquigarrow \n z \),
    and the second -- by dropping \( x \rightsquigarrow w \) and \( \n w
    \rightsquigarrow \n x \).
\end{example}

\subsection{The case of the simplest minimal contradictory component}
\label{subsection:simplest:cscc}
It is convenient to represent \( \xi \) from~\cref{lemma:pie:sat} as the sum
\( \xi = \sum_{r \geq 1}\xi_r \) where \( \xi_r \) is the number of
distinguished minimal contradictory components of excess \( r \)
in a random implication digraph.

We focus on the case \( r = 1 \) first.
The expected value of \( \xi_1 \) is equal to the number of implication digraphs
with a distinguished contradictory component of excess \( 1 \)
(\eg~\cref{fig:cscc:excess:one}) divided by the
total number of implication digraphs. In order to count the implication digraphs
with distinguished contradictory components, we are going to count their respective
sum-representations and then divide by \( 2^m \) which is the number of
sum-representations of a single implication digraph with \( m \) edges. Both
numerator and denominator of the total fraction are then divided by the same
factor \( 2^m \) which cancels out.
In other words, \( \mathbb E \xi_1 \) can be expressed as
\begin{equation}
    \mathbb E \xi_r = \dfrac{
        \left|\left\{ (G, p) \;\big|\;
            \substack{
                G\text{ is a sum-representation digraph with \( 2n \) vertices
                and \( m \) edges}
            \\
            p\text{ is a minimal contradictory pattern of excess \( r \)
                in \( G + \n G \)
            }
        }\right\}\right|
    }{
        \big|\{
            G \mid G\text{ is a sum-representation digraph with \( 2n \) vertices
                and \( m \) edges}
        \}\big|
    }
    .
\end{equation}
We further note that distinguishing a contradictory pattern in \( G + \n G \)
can be replaced with distinguishing a different contradictory pattern in a
sum-representation digraph \( G \) if a proper multiplicity factor is used.

There are only two possible multigraphs which can result in cancelling
of a contradictory digraph of excess \( 1 \), depicted, respectively,
in~\cref{fig:cscc:excess:one:second,fig:cscc:excess:one}. Accordingly, we
consider two different cases.

\textbf{First case.} Consider a sum-representation digraph with a
distinguished pattern of type depicted in~\cref{fig:first:contradictory:pattern}.
Each arrow there represents a sequence of strictly
distinct literals, and it is also required that all the literals on
both sequences (except \( x \) and \( \n x \)) are pairwise strictly distinct, and
that the two distinguished literals marked \( x \) and \( \n x \) are
complementary.  Suppose that the pattern has \( \ell \) oriented edges. Then,
among \( 2^\ell \) graphs obtained by edge rotations of the pattern, there are
exactly \( 4 \) which belong to the same pattern: it is possible to take a
complementary of the path \( x \rightsquigarrow \n x \) which gives \( 2 \)
possibilities, and the complementary of the path \( \n x \rightsquigarrow x \).

\textbf{Second case.} Consider a sum-representation digraph with a
distinguished pattern of type depicted in~\cref{fig:second:contradictory:pattern}.
Again, each arrow corresponds to a sequence, and all the literals on the
three sequeces \( x \rightsquigarrow \n x \), \( \n x \rightsquigarrow y \), \(
y \rightsquigarrow \n y \) are pairwise strictly distinct, except for \( x \)
and \( \n x \), and \( y \) and \( \n y \), which are required to be
complementary. The literals \( x \) and \( y \) are also required to be strictly
distinct. Again, if the total length of the pattern is \( \ell \), there are \(
8 \) sum-representations among \( 2^\ell \) equivalent ones, such
that these sum-representations form the same pattern. Indeed, \( 4 \)
options are given by replacing either of the two paths \( x \rightsquigarrow \n
x\) and \( y \rightsquigarrow \n y \) by its complementary. If the path \( \n x
\rightsquigarrow y \) is replaced by its complementary, then we obtain a
different graph constructed of the three sequences \( x \rightsquigarrow \n x
\), \( \n y \rightsquigarrow x \), \( y \rightsquigarrow \n y \) which belongs
to the same pattern if we swap the variables \( x \) and \( y \). No other
combinations of edge rotations give the same pattern because it is required that
all the literals are pairwise strictly distinct.

\begin{figure}[hbt]
    \begin{minipage}[t]{0.45\textwidth}
    \centering
\begin{tikzpicture}[>=stealth',thick, node distance=1.0cm]
\draw
node[arnBleuPetit, minimum height = 1.2em, thick](x)
                    at (0,0)           {\( x \)}
node[arnRougePetit, minimum height = 1.2em, thick](x')
                    at (2,0)           {\( \fn x \)}
;
\fromto{x}{x'}     [very thick,blackred][thick][-84][-1][0.6];
\fromto{x'}{x}     [very thick,blackred][thick][-84][-1][0.6];
\end{tikzpicture}
\caption{\label{fig:first:contradictory:pattern}First sum-representation
contradictory pattern corresponding to \cref{fig:cscc:excess:one:second}.}
\end{minipage}\hfill
\begin{minipage}[t]{0.53\textwidth}
    \centering
\raisebox{.7cm}{
\begin{tikzpicture}[>=stealth',thick, node distance=1.0cm]
\draw
node[arnBleuPetit, minimum height = 1.2em,  thick](x)
                    at (0,0) {\( x \)}
node[arnRougePetit, minimum height = 1.2em, thick](x')
                    at (2,0) {\( \fn x \)}
node[arnBleuPetit, minimum height = 1.2em,  thick](y)
                    at (4,0) {\( y \)}
node[arnRougePetit, minimum height = 1.2em, thick](y')
                    at (6,0) {\( \fn y \)}
;
\fromto{x}{x'}     [very thick,blackred][thick][4][1][0.6];
\fromto{x'}{y}     [very thick,blackred][thick][-4][-1][0.6];
\fromto{y}{y'}     [very thick,blackred][thick][4][1][0.6];
\end{tikzpicture}
}
\caption{\label{fig:second:contradictory:pattern}Second sum-representation
contradictory pattern corresponding to \cref{fig:cscc:excess:one}.}
\end{minipage}
\end{figure}

Suppose that a pattern (either from~\cref{fig:first:contradictory:pattern}
or from~\cref{fig:second:contradictory:pattern}) has \( \ell \) edges and is a
distinguished subgraph of a sum-representation digraph containing \( m \) edges
in total. Then, among \( 2^m \) equivalent, \( 2^{m-\ell} \)
contain the pattern unaltered. Therefore, the number of
sum-representation digraphs with a distinguished contradictory pattern of length
\( \ell \)
from~\cref{fig:first:contradictory:pattern,fig:second:contradictory:pattern}
counted with multiplicity \( 2^\ell \) enumerates (with respective
multiplicities 1-to-4 and 1-to-8) implication digraphs with a distinguished
contradictory digraph~\cref{fig:cscc:excess:one:second,fig:cscc:excess:one} of
excess 1.

Combining the two cases, we obtain a new expression for \( \mathbb E \xi_1 \):
\begin{align*}
    \mathbb E \xi_1 &=
    \dfrac14 \cdot
    \dfrac{
        \sum_{\ell = 0}^\infty
        2^\ell
        \left|\left\{
            (G, p_\ell) \ | \
            \substack{
                G\text{ is a sum-representation digraph with \( 2n \) vertices
                and \( m \) edges}
            \\
            p_\ell \text{ is a pattern from~\cref{fig:first:contradictory:pattern}
                of length \( \ell \) in \( G \)
            }}
        \right\}\right|
    }{
        \big|\{
            G \mid G\text{ is a sum-representation digraph with \( 2n \) vertices
                and \( m \) edges}
        \}\big|
    }
    \\
    & +
    \dfrac18 \cdot
    \dfrac{
        \sum_{\ell = 0}^\infty
        2^\ell
        \left|\left\{
            (G, p_\ell) \ | \
            \substack{
                G\text{ is a sum-representation digraph with \( 2n \) vertices
                and \( m \) edges}
            \\
            p_\ell \text{ is a pattern from~\cref{fig:second:contradictory:pattern}
                of length \( \ell \) in \( G \)
            }}
        \right\}\right|
    }{
        \big|\{
            G \mid G\text{ is a sum-representation digraph with \( 2n \) vertices
                and \( m \) edges}
        \}\big|
    }
    .
\end{align*}
As we shall see in~\cref{section:asymptotic:analysis}, the
contribution of the first summand is negligible. An intuitive way to see why
this happens is the following: the shape from~\cref{fig:cscc:excess:one:second}
can be obtained by contracting a path \( \n y \rightsquigarrow x \) in
\cref{fig:cscc:excess:one}. Therefore, the first case is a degenerate case when
the length of the corresponding path is equal to zero and the literal \( x \)
coincides with the literal \( \n y \). In general, only the structures with
cubic kernels have dominant contribution.

A similar decomposition can be obtained for \( \mathbb E \xi_r \) for any finite \(
r \).  Instead the two presented cases, the sum should be taken over all the
possible minimal contradictory components of given excess \( r \), divided by a
corresponding multiplicity factor, which plays the analog of a
compensation factor for simple graphs. A detailed explanation of the nature of
such compensation factors is given in~\cref{subsection:compensation:factors}.

\section{Generating functions and saddle point techniques}
\label{section:generating:functions}
The symbolic method and analytic combinatorics~\cite{flajolet2009analytic}
are powerful methods which allow to
\begin{enumerate*}[label=(\textit{\roman*})]
    \item construct combinatorial objects (like graphs, or 2-SAT formulae, in
        our case) by combining them in various different ways using a set of
        allowed operations;
    \item translate this constructions into the language of generating
        functions;
    \item analyse the asymptotic number of objects using
        various available integration tools.
\end{enumerate*}
This section is devoted to describing the symbolic construction of graphs and
directed
graphs needed for our purpose, and the asymptotic tool needed for our analysis.

\subsection{Symbolic method for graphs}
Recall that
the \emph{size} of a graph (or a directed graph) is defined to be equal to the
number of its vertices.
In this paper, labelled graphs are considered: every vertex has a distinct label
from the set \( \{1, 2, \ldots, n\} \) where \( n \) is the total number of
vertices of the graph.

Given a family of graphs (or directed graphs) \( \mathcal A \), we construct its
corresponding \emph{exponential generating function} (EGF) which is
\[
    A(z) := \sum_{n \geq 0} a_n \dfrac{z^n}{n!},
\]
where \( a_n \) is equal to the number of graphs or digraphs of size \( n \)
from \( \mathcal A \).
The operator \( [z^n] \) of taking \( n \)th coefficient is then defined as
\( [z^n] A(z) = a_n / n! \).
All the indefinite integrals \( \int F(z) dz \) in this paper should be
interpreted as \( \int_0^z F(\tau) d \tau \).

Given two families \( \mathcal A \) and \( \mathcal B \),
the sum of their respective generating functions \( A(z) \) and \( B(z) \)
represents the union of the two families \( \mathcal A \) and \( \mathcal B \).
The family \( \fC := \fA \times \fB \) is defined as the family of labelled
graphs (or digraphs) whose vertices are partitioned into two sets such that
there are no edges between the parts, the underlying graph of the first part is
from \( \mathcal A \), and the graph from the second part is from \( \mathcal B
\). We can also say that \( \fA \times \fB \) is the labelled family of ordered
tuples of graphs from \( \fA \) and \( \fB \). If the respective EGFs of these
families are \( \eA(z) \) and \( \eB(z) \) then the EFG of \( \fC \) is
\( A(z) \cdot B(z) \).

Consequently, if \( A(z) \) is an EGF for a graph family \( \mathcal A \) not
containing an empty graph, then \( A^k(z) \) is the EGF for sequences of length
\( k \) of graphs from \( \mathcal A \). Summing over all \( k \), we obtain the
EGF for all possible sequences (possibly empty) of graphs from \( \mathcal A \),
which is \( \frac{1}{1 - A(z)} \).
If the family \( \mathcal A \) does not contain an empty graph, then
the EGF for non-ordered sequences of length \( k \) (\ie sets) of graphs from
\( \mathcal A \) is \( A(z)^k/k! \). Summing over all \( k \), we
obtain the EGF of all unordered sequences of graphs from \( \mathcal A \), which
is \( e^{A(z)} \). Finally, a oriented cyclic composition of graphs from \(
\mathcal A \) has EGF \( \sum_{k \geq 1} A^k(z) / k = \log \frac{1}{1 - A(z)} \).
If a cycle is not oriented, then the corresponding EGF is \( \frac12 \log
\frac{1}{1 - A(z)} \).

\begin{figure}[hbt]
    \begin{minipage}[t]{0.31\textwidth}
\centering
\raisebox{1.0cm}{
\begin{tikzpicture}[>=stealth',thick, node distance=1.0cm]
\draw
node[arnBleuPetit](a) at ( 0, 0)  { }
node[arnBleuPetit](s) at ( 1,-1)  { }
node[arnBleuPetit](d) at ( 2, 0)  { }
node[arnBleuPetit](f) at ( 1, 1)  { }
;
\sedge{a}{d}[][4];
\sedge{a}{f}[][4];
\sedge{s}{a}[][4];
\sedge{d}{s}[][4];
\sedge{f}{d}[][4];
\draw
node[arnRougePetit](z)  at (3,   1)  { }
node[arnRougePetit](x)  at (3,   0)  { }
node[arnRougePetit](q)  at (3.8, 0)  { }
node[arnRougePetit](w)  at (3,  -1)  { }
node[arnRougePetit](e)  at (4.8,-1)  { }
node[arnRougePetit](r)  at (4.8, 0)  { }
node[arnRougePetit](t)  at (3.8, 1)  { }
;
\sedge{q}{w}[][4];
\sedge{q}{r}[][4];
\sedge{e}{r}[][4];
\sedge{r}{t}[][4];
\sedge{t}{q}[][4];
\sedge{z}{x}[][4];
\node[rectangle,dashed,draw,fit=(a)(s)(d)(f), very thick,
      rounded corners=5mm,inner sep= 5pt, bgreen] {};
\node[rectangle,dashed,draw,fit=(q)(w)(e)(r)(t), very thick,
      rounded corners=5mm,inner sep= 5pt, bgreen] {};
\end{tikzpicture}
}
\caption{Product of graph families.}
\end{minipage}\hfill
\begin{minipage}[t]{0.31\textwidth}
    \centering
\begin{tikzpicture}[>=stealth',thick, node distance=1.0cm]
\draw
node[arnRougePetit](a) at (360/5*0 : 1.2)  {}
    node[arnBleuPetit](a1) at ($(a)+(1,0)$) {}
        node[arnBleuPetit](a11) at ($(a1)+(1,0.4)$) {}
        node[arnBleuPetit](a12) at ($(a1)+(1,-0.4)$) {}
node[arnRougePetit](s) at (360/5*1 : 1.2)  { }
    node[arnBleuPetit](s1) at ($(s)+(72+20 : 1)$)  { }
    node[arnBleuPetit](s2) at ($(s)+(72-20 : 1)$)  { }
node[arnRougePetit](d) at (360/5*2 : 1.2)  { }
    node[arnBleuPetit](d1) at ($(d)+(360/5*2 : 1)$) {}
node[arnRougePetit](f) at (360/5*3 : 1.2)  { }
node[arnRougePetit](g) at (360/5*4 : 1.2)  { }
    node[arnBleuPetit](g1) at ($(g)+(-72+30 : 1)$)  { }
    node[arnBleuPetit](g2) at ($(g)+(-72 : 1)$)  { }
    node[arnBleuPetit](g3) at ($(g)+(-72-30 : 1)$)  { }
;
\sedge{a}{s}   [][16];
\sedge{s}{d}   [][16];
\sedge{d}{f}   [][16];
\sedge{f}{g}   [][16];
\sedge{g}{a}   [][16];
\sedge{a}{a1}  [][4];
\sedge{a1}{a11}[][4];
\sedge{a1}{a12}[][4];
\sedge{s}{s1}  [][4];
\sedge{s}{s2}  [][4];
\sedge{d}{d1}  [][4];
\sedge{g}{g1}  [][4];
\sedge{g}{g2}  [][4];
\sedge{g}{g3}  [][4];
\node[dashed,draw,fit=(a)(a1)(a11)(a12), very thick,
      rounded corners=5mm,inner sep= 5pt, bgreen] {};
\node[dashed,draw,rotate fit = 72, fit=(s)(s1)(s2), very thick,
      rounded corners=5mm,inner sep= 5pt, bgreen] {};
\node[dashed,draw,rotate fit = 360/5*2, fit=(d)(d1), very thick,
    rounded corners = 7pt,inner sep= 5pt, bgreen] {};
\node[dashed,draw,rotate fit = 360/5*3, fit=(f), very thick,
    rounded corners = 7pt,inner sep= 5pt, bgreen] {};
    \node[dashed,draw,rotate fit = -72, fit=(g)(g1)(g2)(g3), very thick,
    rounded corners = 5mm,inner sep= 4pt, bgreen] {};
\end{tikzpicture}
\caption{Cyclic composition.}
\end{minipage}\hfill
\begin{minipage}[t]{0.31\textwidth}
    \centering
    \raisebox{1.0cm}{
\begin{tikzpicture}[>=stealth',thick, node distance=1.0cm]
\draw
node[arnRougePetit, minimum height = 1.1em, ultra thick](R)
                           at (0, 0)                        {}
node[arnBleuPetit](a)      at ($(R)-(1.2,0.5)$)             {}
node[arnBleuPetit](a1)     at ($(a)+(-.5,-0.5)$)            {}
node[arnBleuPetit, minimum height = 1.1em, ultra thick](a11)
                    at ($(a1)+(0.5,-0.5)$)           {\( 1 \)}
node[arnBleuPetit](a111)   at ($(a11)-(0.2,0)+(0.5, -0.5)$) {}
node[arnBleuPetit](a112)   at ($(a11)-(0.2,0)+(0,   -0.5)$) {}
node[arnBleuPetit](a113)   at ($(a11)-(0.2,0)+(-0.5,-0.5)$) {}
node[arnBleuPetit](a12)    at ($(a1)+(-.5,-0.5)$)           {}
node[arnBleuPetit](R1)     at ($(R)  +  (0,-0.5)$)          {}
node[arnBleuPetit](R11)    at ($(R1) +  (0,-0.5)$)          {}
node[arnBleuPetit](R2)     at ($(R)  +  (0.5,-0.5)$)        {}
node[arnBleuPetit](R21)    at ($(R2) +  (0,-0.5)$)          {}
node[arnBleuPetit](R211)   at ($(R21)+  (-.5,-0.5)$)        {}
node[arnBleuPetit](R2111)   at ($(R211)+ (0,-0.5)$)         {}
node[arnBleuPetit](R2112)   at ($(R211)+ (.5,-0.5)$)        {}
node[arnBleuPetit](R2113)   at ($(R211)+ ( 1,-0.5)$)        {}
node[arnBleuPetit](R212)   at ($(R21)+  (.5,-0.5)$)         {}
;
\node[dashed,draw,fit=(R)(R2111)(R2113), very thick,
      rounded corners=5mm,inner sep= 5pt, bgreen] {};
\node[dashed,draw,fit=(a)(a12)(a11)(a111)(a113), very thick,
      rounded corners=5mm,inner sep= 5pt, bgreen] {};
\sedge{R}{a}      [ultra thick][4];
\sedge{a}{a1}     [ultra thick][4];
\sedge{a1}{a11}   [ultra thick][4];
\sedge{a11}{a111} [][4];
\sedge{a11}{a112} [][4];
\sedge{a11}{a113} [][4];
\sedge{a1}{a12}   [][4];
\sedge{R}{R1}     [][4];
\sedge{R}{R2}     [][4];
\sedge{R1}{R11}   [][4];
\sedge{R2}{R21}   [][4];
\sedge{R21}{R211} [][4];
\sedge{R21}{R212} [][4];
\sedge{R211}{R2111}[][4];
\sedge{R211}{R2112}[][-4];
\sedge{R211}{R2113}[][-4];
\end{tikzpicture}
}
\caption{Constructing unrooted trees: the case when the label of the root is not
equal to \( 1 \).}
\end{minipage}
\end{figure}

If all the graphs in a family \( \mathcal A \) are connected,
then \( e^{A(z)} \) enumerates labelled graphs whose connected components are from \(
\mathcal A \). The product of exponential generating functions for labelled
graphs can be given further interpretations: if two graph families \( \mathcal A
\) and \( \mathcal B \) do not intersect and the graphs from these families are
connected, then the product of their EGFs enumerates graphs with two connected
components, one from \( \mathcal A \) and the second from \( \mathcal B \).
In the same manner, one can multiply generating functions of not necessarily
connected graph families provided that the underlying connected components
corresponding to the two families, are always distinct. Then, the product can be
interpreted as a new family of graphs whose vertices can be partitioned uniquely
into two sets, and the graphs constructed on the respective sets, belong to the
first and the second family, respectively.

It is well known that the EGF \( T(z) \) for rooted trees,
also known as Cayley trees, satisfies
\[
    T(z) = z e^{T(z)} = \sum_{n \geq 0} n^{n-1} \dfrac{z^n}{n!}.
\]
\emph{Unicyclic graphs} are defined as connected graphs whose number of vertices
is equal to their number of edges. Every such graph has exactly one undirected
cycle inside it and can be represented as a sequence of trees arranged in a
cycle of length at least \( 3 \), which results in the following EGF \( V(z) \):
\[
    V(z) =
    \dfrac12 \sum_{k \geq 3} \dfrac{T(z)^k}{2k}
    =
    \dfrac{1}{2}\left[
        \log \dfrac{1}{1 - T(z)} - T(z) - \dfrac{T^2(z)}{2}
    \right].
\]

The EGF for \emph{unrooted trees} has the form \( U(z) = T(z) - \frac{T(z)^2}{2}
\). An elegant proof of this fact is given in~\cite{flajolet1989first}.
We present a sketch of the proof for completeness.
The label of the root of a rooted tree can be equal to \( 1 \) or
greater than \( 1 \). The generating function of rooted trees whose root has
label \( 1 \) is \( U(z) \) because an unrooted tree can be canonically rooted
at vertex with label \( 1 \). Otherwise, a tree whose root label is greater than
\( 1 \) can be represented as a pair consisting of the subtree whose parent is
the root and which contains the vertex with label \( 1 \), and the remaining
tree which is formed by removing the aforementioned subtree from the initial
tree. The generating function of such (unordered) tuples is \( T^2(z) / 2 \).
Adding up the two cases, we obtain an identity \( T(z) = U(z) + T^2(z) / 2 \).

Similarly to trees and unicycles, more complex structures can be defined. An
\emph{excess} of a connected graph is equal to the number of its edges minus the
number of its vertices. A connected graph with minimal possible excess is a
tree, and its excess is equal to \( -1 \). Next, a unicycle is a connected graph
of excess \( 0 \). A connected graph of excess \( 1 \) is called a
\emph{bicycle}.

Suppose that a connected graph \( G \) has excess \( r \). The \emph{pruning}
procedure is described as repeated removal of vertices of degree \( 1 \) until
no vertices of degree \( 1 \) remain. The resulting graph is then called the
\emph{2-core} or just the \emph{core} of \( G \).
Consequently, each vertex of degree \( 2 \) of \( G \) can be removed, while
connecting its former neighbours by an edge; this procedure is called
\emph{cancelling}. The resulting graph obtained by pruning and cancelling is
called the \emph{3-core}, or the \emph{kernel} of \( G \).
After cancelling, a simple graph may gain some multiple edges
and loops, \ie become a multigraph. Note that pruning and cancelling do not
change the excess of \( G \). It is known (see \eg~\cite{janson1993birth}) that
after pruning and cancelling, the resulting multigraph belongs to a finite set
of multigraphs with given excess \( r \), see
also~\cref{subsection:compensation:factors}.

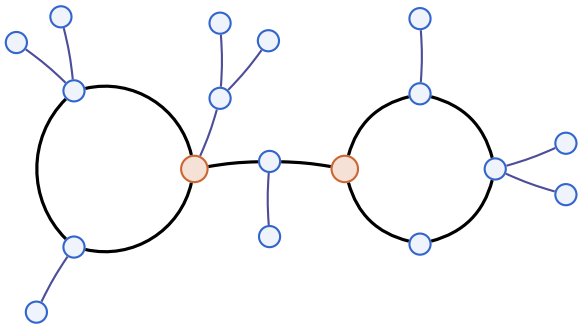
\begin{figure}[hbt]
\centering
\begin{tikzpicture}[>=stealth',thick, node distance=1.0cm]
\draw
node[arnRougePetit, minimum height = 1.0em](q) at (360/3*0 : 1.2)  {}
    node[arnBleuPetit](q01) at ($(q)+(70:1)$) {}
        node[arnBleuPetit](q011) at ($(q01)+(70-20:1)$) {}
        node[arnBleuPetit](q012) at ($(q01)+(70+20:1)$) {}
node(q0)               at ($(q)-(1,0)$)  {}
node[arnBleuPetit](q1) at ($(q0)+(120:1.2)$)  {}
    node[arnBleuPetit](q11) at ($(q1)+(120-20:1)$) {}
    node[arnBleuPetit](q12) at ($(q1)+(120+20:1)$) {}
node[arnBleuPetit](q2) at ($(q0)+(240:1.2)$)  {}
    node[arnBleuPetit](q21) at ($(q2)+(240:1)$) {}
node[arnBleuPetit](qw) at ($(q)+(1,0.1)$)  {}
    node[arnBleuPetit](qw1)at ($(qw)+(0,-1)$)  {}
node[arnRougePetit, minimum height = 1.0em](w) at ($(q)+(2,0)$)  {}
node(w0)               at ($(w)+(1,0)$)  {}
node[arnBleuPetit](w1) at ($(w0)+(90:1)$)  {}
    node[arnBleuPetit](w11) at ($(w1)+(90:1)$)  {}
node[arnBleuPetit](w2) at ($(w0)+(0:1)$)  {}
    node[arnBleuPetit](w21) at ($(w2)+(-20:1)$)  {}
    node[arnBleuPetit](w22) at ($(w2)+(+20:1)$)  {}
node[arnBleuPetit](w3) at ($(w0)+(-90:1)$)  {}
;
\sedge{q}{q1}   [very thick][45];
\sedge{q1}{q2}  [very thick][45];
\sedge{q2}{q}   [very thick][45];
\sedge{q}{qw}   [very thick][-4];
\sedge{qw}{w}   [very thick][-4];
\sedge{w}{w1}   [very thick][-30];
\sedge{w1}{w2}  [very thick][-30];
\sedge{w2}{w3}  [very thick][-30];
\sedge{w3}{w}   [very thick][-30];

\sedge{q}{q01}    [][4];
\sedge{q01}{q011} [][4];
\sedge{q01}{q012} [][4];
\sedge{q1}{q11}   [][4];
\sedge{q1}{q12}   [][4];
\sedge{q2}{q21}   [][4];
\sedge{qw}{qw1}   [][4];
\sedge{w1}{w11}   [][4];
\sedge{w2}{w21}   [][4];
\sedge{w2}{w22}   [][4];
\end{tikzpicture}
\caption{Pruning and cancelling.}
\end{figure}

It turns out that in the subcritical and in the critical phases, the dominant
contribution comes only from the cubic (3-regular) kernels. If a kernel is not
cubic, it can be obtained as a ``limiting case'' from some cubic kernel by
contracting some of its edges.
This corresponds to the case when the corresponding paths in the initial graph
have length zero. Heuristically, since the average length of a path on the
cubic core is of order \( \Theta(n^{1/3}) \) in the critical phase (see
\eg~\cite{janson2011random}), the probability that a path has zero vertices, \ie
that the kernel is non-cubic, is of order \( O(n^{-1/3}) \).

Turning to the case of digraphs and implication digraphs,
pruning is not defined for a contradictory component, because it doesn't contain
any
vertices of degree \( 1 \). Cancellation is then defined in the following way: if a
vertex has in-degree \( 1 \) and out-degree \( 1 \), it is removed and a
directed edge between the in-neighbour and the out-neighbour is added.
It is easy to see that a directed multigraph obtained after a cancellation
procedure with given excess belongs to a finite sets of contradictory
reduced multigraphs of given excess (see
also~\cref{problem:enumeration:of:cscc}).
If in an implication multidigraph every vertex has the sum of in- and
out-degrees equal \( 3 \), it is also called cubic.

\subsection{Weakly connected directed graphs}
\label{subsection:weakly:connected:digraphs}

A \emph{weakly connected} digraph is a directed graph whose underlying
non-oriented projection is connected.  We construct the EGFs of the analogs of the
trees and unicycles in the world of directed graphs.

A rooted tree of size \( n \) has \( n-1 \) edges, and for each of the edges
there are two orientation choices. For unrooted trees, we can take the vertex
with label \( 1 \) as a canonical root, so that all of the \( 2^{n-1} \) edge
orientations can be also distinguished and result in distinct oriented unrooted
trees.
Therefore, the EGFs for, respectively, oriented rooted and unrooted trees
(\ie directed weakly connected graphs whose non-oriented projections are,
respectively, rooted and unrooted trees) are, respectively,
\begin{equation}
    T_{\to}(z) = \dfrac{1}{2} T(2 z), \quad
    \text{and} \quad
    U_{\to}(z) = \dfrac{1}{2} U(2 z)
    =
    \dfrac12 T(2z) - \dfrac14 T^2(2z)
    .
\end{equation}
The EGF for simple digraphs whose underlying non-oriented projections are
cycles of length at least \( 3 \), is obtained by substitution \( z \mapsto 2z
\) into that of cyclic non-oriented graphs, and equals
\( \dfrac12\left[
    \log \dfrac{1}{1 - 2z} - (2z) - \dfrac{(2z)^2}{2}
\right]\). In simple digraphs, it is allowed to have a circuit of length \( 2 \)
on condition that it has the form \( x \to \n x \to x\), \ie connects two
vertices in both directions. Adding this case corresponds to the summand
\( z^2/2 \).

By substitution, it follows that the EGF for unicyclic directed graphs
(directed graphs whose non-oriented projections are unicyclic graphs) is
\begin{equation}
    V_{\to}(z) =
    \dfrac12\left[
        \log \dfrac{1}{1 - 2T_{\to}(z)} - (2T_\to(z)) - \dfrac{(2T_{\to}(z))^2}{2}
    \right]
    + \dfrac{T_\to(z)^2}{2}
    .
\end{equation}
The last summand is taken out intentionally.
Essentially, \( V_{\to}(z) = V(2z) + T^2(2z) / 8 \), where \( V(z) \) is the
previously obtained EGF for unicyclic simple graphs.
EGFs for directed graphs with higher excess can be constructed in the same way.

\begin{remark}
    Apart from weakly connected digraphs, which come as a more or less direct
    generalisation of the corresponding non-oriented versions, it is also
    possible to specify digraphs with respect to their \emph{strongly connected
    components}, see~\cite{de2019symbolic} and references therein.
    This may lead to further development of more
    precise properties of the 2-SAT phase transition, or that of critical random
    digraphs.
\end{remark}

\subsection{Compensation factors}
\label{subsection:compensation:factors}
The compensation factor of a multigraph \( M \) is a certain coefficient
depending on \( M \) which is required to obtain the EGF of the family of graphs
reducing to \( M \) under pruning and cancelling. While for the purposes of the
current paper we take a relatively low-level classical approach, there also
exists an elegant unifying framework~\cite{panafieu2016analytic} for dealing
with compensation factors by introducing bivariate generating functions which
are exponential with respect to two variables.

Consider the three possible kernels of excess \( 1 \)
in~\cref{figure:bicyclic:kernels}. Each of these kernels a certain symmetry
group which acts on the set of its half-edges and vertices. Since the
objects are labelled, the cardinality of the automorphism group acting on its
vertices equals \( n! \) divided by the number of distinct labelled graphs.
The remaining compensation factor can be determinted solely on the base of the
incidence matrix.

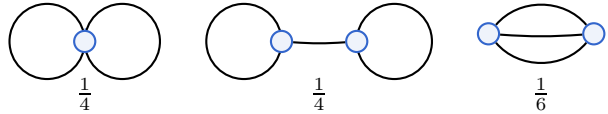
\begin{figure}[hbt]
\centering
\begin{tikzpicture}[>=stealth',thick, node distance=1.0cm]
\draw
node[arnBleuPetit](a) at (0.0, 0.0) {}
node at (0.0, -.7) {\(\tfrac14\)}
[thick] (a) arc [radius = 0.5, start angle= 0, end angle= 360]
[thick] (1.0, 0.0) arc [radius = 0.5, start angle= 0, end angle= 360]
;
\path (a) edge [bblue, very thick, bend left=90] (a) ;
\end{tikzpicture}
\( \quad \)
\begin{tikzpicture}[>=stealth',thick, node distance=1.0cm]
\draw
node[arnBleuPetit](a) at (0.0, 0.0) {}
node[arnBleuPetit](b) at (1.0, 0.0) {}
node at (0.5, -.7) {\(\tfrac14\)}
[thick] (a) arc [radius = 0.5, start angle = 0, end angle= 360]
[thick] (2.0, 0.0) arc [radius = 0.5, start angle = 0, end angle= 360]
;
\sedge{a}{b}[ ][4];
\end{tikzpicture}
\( \quad \)
\begin{tikzpicture}[>=stealth',thick, node distance=1.0cm]
\draw
node[arnBleuPetit](a) at (0.0, 0.0) {}
node[arnBleuPetit](b) at (1.4, 0.0) {}
node at (0.7, -.8) {\(\tfrac16\)}
;
\sedge{a}{b}[ ][4];
\path (a) edge [thick, bend left = 50] (b) ;
\path (a) edge [thick, bend right = 50] (b) ;
\end{tikzpicture}
\caption{
    \label{figure:bicyclic:kernels}
    Kernels of complex components of excess \( 1 \) and their respective
compensation factors.}
\end{figure}

\begin{definition}
    Consider a multigraph \( M \) with \( n \) labelled vertices and with \(
    m_{xy} \) edges between vertices with labels \( x \) and \( y \). In
    particular, \( m_{xx} \) denotes the number of loops of a vertex \( x \),
    and \( m_{xy} = m_{yx} \).

    A \emph{compensation factor} \( \varkappa(M) \) is defined as follows:
    \begin{equation}
        \dfrac{1}{\varkappa(M)}
        :=
        \prod_{x = 1}^n 2^{m_{xx}} \prod_{y=x}^n m_{xy}!
        .
    \end{equation}
\end{definition}
The number of ways to construct a single multigraph \( M \) with \( n \)
vertices and \( m \) edges by gluing the half-edges of the labelled vertices can
be viewed as \( 2^m m! \varkappa(M) \).
The EGF of multigraphs \( G \) reducing under pruning and cancelling to a
given multigraph \( M \) is then written as
\[
    \dfrac{\varkappa(M)}{n!} \cdot \dfrac{T(z)^n}{(1 - T(z))^m}
    ,
\]
which can be obtained by substitution of trees into the corresponding sequences
and vertices. For a rigorous proof of this expression,
see~\cite{janson1993birth}.

For contradictory components in implication digraphs, we define
the compensation factors in a very similar way.

\begin{definition}[Compensation factor of sum-representations and implication
    digraphs]
    Consider a sum-representation digraph \( D \in \mathcal D^\circ(2n,m) \) and
    a contradictory implication digraph \( M \) with \( 2n \)
    conventionally labelled vertices and \( 2m \) directed edges.
    Suppose that for each literals \( x \) and \( y \) there are
    \( d_{xy} \) oriented edges in \( D \) and
    \( m_{xy} \) oriented edges in \( M \)
    between vertices with labels
    \( x \) and \( y \). The compensation factors of \( D \) and \( M \)
    are then defined as
    \begin{equation}
        \dfrac{1}{\varkappa(D)} :=
        \prod_{x = 1}^n \prod_{y = 1}^n d_{xy}!
        , \quad
        \dfrac{1}{\varkappa(M)} :=
        \prod_{x = 1}^n \prod_{y = 1}^n m_{xy}!!
        ,
    \end{equation}
    where for even \( N \), the double factorial is defined as \( N!! = 2 \cdot
    4 \cdot \cdots \cdot N \).
\end{definition}
For the case of simple graphs, the sum over all possible labelled kernels \( M
\) with \( n \) vertices and given excess \( r \),
\(
    \sum_{M} \frac{\varkappa(M)}{n!}
\) expresses the coefficient at \( |\mu|^{-3r} \) in the probability that a
random graph has
a complex component of such excess
(c.f.\xspace the asymptotic probability \( 5 / 24 \cdot |\mu|^{-3} \) of having a
bicyclic complex component, where \( 5/24 \) equals the sum of the compensation
factors \( 1/4 \) and \( 1/6 \) divided by \( 2! \),
see also~\cref{remark:bicyclic:components}).

As we show in~\cref{theorem:satisfiability}, the compensation factor for
contradictory components plays exactly the same role.
The compensation factor for contradictory components has the following additional
interpretation.

\begin{lemma}
    \label{lemma:compensation:factor:interpretation}
    Consider a contradictory component \( \mathcal C \) with \( n \) Boolean
    variables and one of its
    sum-representations \( \pi(\mathcal C) \). We shall say that a
    sum-representation digraph \( \pi_1 \) is \emph{isomorphic} to a digraph
    \( \pi_2 \) if \( \pi_1 \) can be obtained from \( \pi_2 \) by a permutation
    of Boolean variables.
    Then, the number of sum-representations, both equivalent and isomorphic to
    \( \pi(\mathcal C) \) is equal to
    \( n! \varkappa(\pi) / \varkappa(\mathcal C) \).
\end{lemma}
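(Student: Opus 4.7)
The plan is to establish the identity via an orbit-stabilizer argument for the natural action of the symmetric group $S_n$ on labeled sum-representation digraphs, combined with a combinatorial identification of stabilizer orders with the compensation factors. A permutation $\sigma \in S_n$ acts by sending the literal $i$ to $\sigma(i)$ and $\overline{i}$ to $\overline{\sigma(i)}$, and thus extends to an action on sum-representations. A sum-representation $\pi'$ is simultaneously equivalent to $\pi$ (i.e.\ it is a sum-representation of $\mathcal{C}$) and isomorphic to $\pi$ (i.e.\ $\pi' = \sigma(\pi)$ for some $\sigma$) precisely when $\sigma(\pi) + \overline{\sigma(\pi)} = \sigma(\mathcal{C}) = \mathcal{C}$, that is when $\sigma \in \mathrm{Aut}_{S_n}(\mathcal{C})$, the variable-permutation stabilizer of $\mathcal{C}$. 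By orbit-stabilizer the cardinality I wish to compute equals $|\mathrm{Aut}_{S_n}(\mathcal{C})|/|\mathrm{Stab}_{S_n}(\pi)|$.

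Next, I would reinterpret both group orders by directly enumerating labeled implication multidigraphs of a fixed isomorphism type, in the same spirit as the classical multigraph compensation-factor identity recalled just before the statement. Building $\mathcal{C}$ from the labeled vertex set amounts to choosing, for each ordered literal pair $(x,y)$, an unordered collection of $m_{xy}$ parallel edges, with the constraint that the $(x,y)$-block is coupled to the $(\overline{y},\overline{x})$-block through the complementary pairing of edges inherited from clauses. The automorphism count therefore decomposes into a vertex part of size $n!/|\mathrm{Aut}_{S_n}(\mathcal{C})|$ and an edge part of size $1/\varkappa(\mathcal{C}) = \prod_{x,y} m_{xy}!!$, the double factorial arising because permuting a block of $m_{xy}$ parallel edges forces the matched permutation on the complementary block. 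An entirely parallel analysis for the conflict-free digraph $\pi$, in which edges are not coupled in complementary pairs, gives the edge-part $1/\varkappa(\pi) = \prod d_{xy}!$. Comparing the two enumerations and taking the ratio $|\mathrm{Aut}_{S_n}(\mathcal{C})|/|\mathrm{Stab}_{S_n}(\pi)|$ then yields exactly $n!\,\varkappa(\pi)/\varkappa(\mathcal{C})$.

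The main obstacle is the rigorous justification of the double-factorial factor in $\varkappa(\mathcal{C})$: one must verify that when the $m_{xy}$ parallel edges $x \to y$ are permuted, the forced compatible permutation on the complementary block $\overline{y} \to \overline{x}$ together with the choice of which edge of each complementary pair is retained in the sum-representation produces a group of order $m_{xy}!!$ rather than the naive $m_{xy}!$ or $(m_{xy}!)^2$. This requires tracking, pair by pair, the interaction between edge-block permutations and the orientation choice inherent in the sum-representation, while checking that under conditions \ref{condition:no:double:edges}--\ref{condition:no:multigraphs} the self-complementary case $(x,y) = (\overline{y},\overline{x})$ cannot occur and therefore does not need a separate analysis. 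Once this structural identification is in place, the identity follows directly from the quotient of the two labeled-multigraph counts.
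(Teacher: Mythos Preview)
Your orbit--stabilizer setup in the first paragraph is clean and correct as far as it goes: the labeled multidigraphs that are simultaneously equivalent and isomorphic to $\pi$ are exactly the $\mathrm{Aut}_{S_n}(\mathcal C)$--orbit of $\pi$, of cardinality $|\mathrm{Aut}_{S_n}(\mathcal C)|/|\mathrm{Stab}_{S_n}(\pi)|$. The problem is that this number is \emph{not} what the lemma is counting, and your second paragraph does not bridge the gap. Test it on the worked example following the lemma: for the excess-$1$ kernel of \cref{fig:cscc:excess:one} you get $|\mathrm{Aut}_{S_2}(\mathcal C)|=2$ (swap $x\leftrightarrow y$) and $|\mathrm{Stab}_{S_2}(\pi)|=1$, hence orbit size $2$; but $n!\,\varkappa(\pi)/\varkappa(\mathcal C)=2!\cdot 1/(1/4)=8$. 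The missing factor of $4$ is exactly $1/\varkappa(\mathcal C)$ and comes from the two self-complementary double edges $x\to\overline x$ and $y\to\overline y$.

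The underlying issue is a conflation of two different symmetry groups. The quantities $\varkappa(\mathcal C)^{-1}=\prod m_{xy}!!$ and $\varkappa(\pi)^{-1}=\prod d_{xy}!$ record \emph{edge-permutation} symmetries in the configuration model (permutations of parallel edges, together with the within-pair swaps forced by the complementary pairing), whereas $\mathrm{Aut}_{S_n}(\mathcal C)$ and $\mathrm{Stab}_{S_n}(\pi)$ are \emph{variable-permutation} symmetries of labeled multidigraphs. Your sentence ``the automorphism count therefore decomposes into a vertex part \ldots\ and an edge part of size $1/\varkappa(\mathcal C)$'' asserts a relation between these that does not hold: the count of labeled multidigraphs isomorphic to $\mathcal C$ is simply $n!/|\mathrm{Aut}_{S_n}(\mathcal C)|$, with no edge factor. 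What the paper (and \cref{subsection:simplest:cscc}) actually counts are sum-representations \emph{as edge-subsets} of $\mathcal C$, where the $2^m$ choices of one edge per complementary pair are kept distinct even when they produce the same labeled multidigraph; the factor $\prod m_{xy}!!$ then enters directly as the number of such choices yielding a fixed multidigraph, which is why the paper's proof computes it as an edge-automorphism count rather than via vertex orbits. Your argument would need to be reorganised to track this edge-level multiplicity from the start.
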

\begin{proof}
    If \( \mathcal C_1, \ldots, \mathcal C_K \) are \( K \) possible isomorphic
    implication
    digraphs obtained by label permutations, then each of the digraphs has
    the same number of
    \( 2^m \) sum-representations, and therefore, each isomorphic
    sum-representation is counted with multiplicity \( K \).

    Let us compute the number \( K \) of possible isomorphic digraphs obtained
    by label permutations. All the edges of \( \mathcal C \) come in pairs,
    therefore, within each pair there is a cyclic group of order 2, and there
    are \( (m_{xy}/2)! \) permutations between the \( (m_{xy}/2) \) pairs of
    oriented edges between the vertices \( x \) and \( y \). Multiplying the
    orders of these groups, we obtain \( (m_{xy}/2)! \cdot 2^{m_{xy}/2} =
    (m_{xy})!!  \). Each loop is directed, so there is no additional factors
    corresponding to the loops. The number \( K \) then equal to \( n! /
    \varkappa(\mathcal C) \) since this is equal to the cardinality of the
    automorphism group.
\end{proof}

\begin{example}
As an illustration of this concept, let us compute the compensation factor of
the contradictory component of excess \( 1 \)
from~\cref{fig:cscc:excess:one}. There are \( n = 2 \) Boolean variables
and two multiple edges, between, respectively, \( x \) and \( \n x \), and \( y
\) and \( \n y \). Therefore, the compensation factor is \( 1/2!!^2 = 1/4 \). The
number of isomorphic equivalent sum-representations is therefore \( 2! \cdot 4 =
8 \), which explains the factor \( 1/8 \) in the second summand of the
  expression for \( \mathbb \xi_1 \) in~\cref{subsection:simplest:cscc}.
  The factor \( 1/4 \) corresponding to~\cref{fig:cscc:excess:one:second} is
  computed as \( 1! \cdot 2!!^2 \) because there is one Boolean variable and two
  double edges.
\end{example}

It is natural to expect that the concept of compensation factor of a
contradictory component will prove helpful not only for the subcritical phase of
the phase transition, but will allow to give a complete description of the
transition curve.
See also~\cref{section:open:problems} for discussions and open questions.

\subsection{Incorporating relations between labels of vertices}

We continue to use the labelling convention for digraphs and implication
digraphs introduced in~\cref{section:sum:representation}, so that for digraphs
having \( 2n \) vertices, the labels of the vertices are partitioned into the
set of positive literals \( \{1, \cdots, n\} \) and negative ones
\( \{\n 1, \cdots, \n n \} \).

\begin{definition}[Literal linking construction]
\label{definition:literal:linking:construction}
Suppose that in a given family \( \fA \) of digraphs, each graph \( G \in
\fA \) contains two distinguished \emph{empty nodes}, \ie vertices for which
we do not assign any labels and which do not contribute to the total size of
the graph.
We define the family \( \Link[\fA]\) as the family
obtained from \( \fA \) by replacing the two empty nodes by
complementary literals.
\end{definition}

\begin{lemma}
    \label{lemma:complementary:literal:insertion:operator}
    If \( \eA(z) \) is the EGF of a given family \( \fA \) of digraphs with
    even number of nodes, two distinguished empty nodes (of size 0),
    then the EGF of \(
    \Link[\fA]\) is given by
    \begin{equation}
        \eLink [\eA](z) := z\int_0^z \eA(t) dt
        .
    \end{equation}
\end{lemma}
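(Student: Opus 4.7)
The approach is to establish a clean combinatorial identity of the form $b_m = m \cdot a_{m-2}$ for the coefficients of the relevant EGFs, then read off the integral formula by routine rearrangement.

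First, I fix notation: write $A(z) = \sum_{n \geq 0} a_n z^n/n!$, so $a_n$ counts digraphs in $\mathcal{A}$ with $n$ labeled vertices (using the convention $\{1, \ldots, n/2, \fn 1, \ldots, \fn{n/2}\}$, hence $n$ is always even) together with the two distinguished empty nodes. Let $b_m = m! \,[z^m]\, \eLink[\eA](z)$ denote the number of structures in $\Link[\fA]$ on $m$ labeled vertices.

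The heart of the argument is a direct enumeration of $b_m$. To reconstruct a structure in $\Link[\fA]$ on $m = 2(k+1)$ labeled vertices, I will: (i) pick which of the $k+1 = m/2$ Boolean variables is assigned to the former pair of empty nodes; (ii) for this chosen variable $j$, decide which of the two (distinguishable) empty-node positions receives the literal $j$ and which receives $\fn j$, contributing a factor $2$; (iii) place a member of $\fA$ on the remaining $m-2$ labeled vertices, after canonical relabeling to the standard label set $\{1, \ldots, k, \fn 1, \ldots, \fn k\}$. Multiplying the three factors yields $b_m = 2 \cdot (m/2) \cdot a_{m-2} = m \cdot a_{m-2}$.

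Translating to generating functions and substituting $n = m-2$ gives
\[
    \eLink[\eA](z)
    = \sum_{m \geq 2} m \cdot a_{m-2} \frac{z^m}{m!}
    = \sum_{n \geq 0} a_n \frac{z^{n+2}}{(n+1)!}
    = z \int_0^z \eA(t)\, dt,
\]
which is the desired identity. The only delicate point is the factor of $2$ coming from step (ii): this relies on the two empty nodes being a priori distinguishable in each $G \in \fA$ (e.g.\ marked as a ``first'' and ``second'' empty node, as in the directed-path usage elsewhere in the paper). Were they treated as an unordered pair, the formula would pick up a compensating factor of $1/2$, so this convention is implicit in the statement and worth flagging; otherwise the argument is entirely routine.
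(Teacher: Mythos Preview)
Your proof is correct and follows essentially the same route as the paper's: both count the structures of size $m=2n$ in $\Link[\fA]$ as $2n$ times the number of $\fA$-structures on $2n-2$ labeled vertices, then translate this coefficient identity into the integral formula. Your decomposition of the factor $2n$ as $2\cdot(m/2)$ (choice of variable times choice of orientation of the literal pair) is slightly more explicit than the paper's single sentence ``the number of ways to insert the complementary labels into two distinguished positions is $2n$,'' and your remark about the implicit distinguishability of the two empty nodes is a useful clarification, but the argument is the same.
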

\begin{proof}
    The number of graphs of size \( (2n-2) \) from \( \fA \) is
    \( (2n-2)! [z^{2n-2}] \eA(z) \). Then, the number of ways to insert the
    complementary labels into two distinguished positions is \( 2n \). The total
    number of graphs of size \( 2n \) from the family
    \( \Link[\fA] \) is therefore \( 2n \cdot (2n - 2)! [z^{2n}] z^2 A(z) =
    \frac{1}{2n - 1} (2n)! [z^{2n-2}] \eA(z) \). We conclude by noting that \(
    (2n) \)-th coefficient of \( z \int_0^z \eA(t) dt \) equals to \( \frac{1}{2n -
    1} [z^{2n - 2}] \eA(z) \) and multiplying by \( (2n)! \) gives the total
    number of graphs of size \( 2n \) from \( \Link[\fA] \).
\end{proof}

\begin{remark}
    It is possible to insert more than one pair of complementary literals, in
    this case more than one application of \( \Link \) is required. Naturally,
    in this case, it is required that the number of unlabelled empty slots
    should be equal to two times
    the number of inserted complementary literals.
\end{remark}

\subsection{Saddle point lemma for random graphs}
\label{subsection:sp:lemma:random:graphs}

Among many tools introduced in~\cite{janson1993birth}, the central one was the
saddle point lemma allowing to study the fine structure of a random graph near
the point of its phase transition using the generating functions.  We give it in
a slightly reformulated form without a proof.
\begin{lemma}[{\cite[Lemma 3]{janson1993birth}}]
    \label{lemma:sp:integration}

    Suppose that \( m = \frac{n}{2}(1 + \mu n^{-1/3}) \), and
    \( H(t) \) is a function analytic at \( t = 1 \).
    Then,
    as \( \mu \to -\infty \), and \( |\mu| \leq n^{1/12} \),
    \begin{equation}
        \dfrac{n!}{|\mathcal G(n,m)|} [z^n] \dfrac{U(z)^{n-m}}{(n-m)!}
        e^{V(z)}
        \dfrac{H(T(z))}{(1 - T(z))^y}
        =
        H(1)
        \left(\frac{n^{1/3}}{|\mu|}\right)^y
        (1 + O(|\mu|^{-3}))
    .
    \end{equation}
\end{lemma}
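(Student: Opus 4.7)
The plan is to adapt the standard saddle-point analysis of \cite{janson1993birth} and to track how the extra factor $H(T(z))/(1-T(z))^y$ perturbs the classical computation. First I apply Cauchy's coefficient formula and then make the substitution $\tau = T(z)$, so that $z = \tau e^{-\tau}$ and $dz = (1-\tau)e^{-\tau}\,d\tau$. Using the explicit formulas $U(z)=\tau-\tau^2/2$ and $e^{V(z)} = (1-\tau)^{-1/2} e^{-\tau/2-\tau^2/4}$, the integrand collapses to $e^{\phi(\tau)}g(\tau)\,d\tau$ with
\[
\phi(\tau) = n\tau + (n-m)\log(1-\tau/2) - (m+1)\log\tau
\]
carrying the entire $n$-dependence, and
\[
g(\tau) = \frac{H(\tau)\, e^{-\tau/2-\tau^2/4}\,(1-\tau)^{1/2-y}}{(n-m)!}
\]
absorbing the slowly varying factors, in particular $H$ and the singular factor $(1-T)^{-y}$.

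Next I locate the saddle. A short computation gives $\phi'(1)=\phi''(1)=0$ exactly when $m=n/2$, so the derivative has a degenerate zero at $\tau=1$ at the critical density. With $m=\tfrac{n}{2}(1+\mu n^{-1/3})$ and $\tau_0=1+\delta$, the leading balance in $\phi'(\tau_0)=0$ reads $2\nu\delta - n\delta^2 + \cdots = 0$ with $\nu = m-n/2 = \tfrac12 \mu n^{2/3}$, giving $\delta = 2\nu/n = \mu n^{-1/3}$ and therefore $1-\tau_0 = |\mu|n^{-1/3}$. The same expansion yields $\phi''(\tau_0) = |\mu|n^{2/3}\,(1+o(1))$, so the Gaussian width at the saddle is $(|\mu|n^{2/3})^{-1/2}$, smaller than $1-\tau_0$ by a factor $|\mu|^{3/2}$. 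This clean separation, valid as $|\mu|\to\infty$, is what allows a non-degenerate quadratic saddle-point expansion. Taking a vertical contour through $\tau_0$, localising to an interval of length $C(|\mu|n^{2/3})^{-1/2}\log n$ around $\tau_0$ (the tails decay super-polynomially in $|\mu|$), and expanding $\phi$ and $g$, one finds that the Gaussian integral together with $(1-\tau_0)^{1/2-y}$, the elementary factor $e^{-\tau_0/2-\tau_0^2/4}$, the coefficient $1/(n-m)!$, and the normalisation $n!/|\mathcal G(n,m)|$ collapses, after Stirling, into exactly the constant appearing in the classical case $H\equiv 1$, $y=0$ of \cite{janson1993birth}, multiplied by $(|\mu|n^{-1/3})^{-y} H(1) = (n^{1/3}/|\mu|)^y H(1)$.

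The main obstacle is balancing three distinct sources of relative error: (i) the analyticity of $H$ at $1$ only gives $H(\tau_0)=H(1)(1+O(|\mu|n^{-1/3}))$; (ii) the expansion of $(1-\tau)^{1/2-y}$ at $\tau_0$ produces odd powers of $\tau-\tau_0$ that vanish by Gaussian symmetry and an even contribution of relative size $(|\mu|n^{2/3})^{-1}(1-\tau_0)^{-2} = |\mu|^{-3}$; (iii) the cubic saddle correction contributes, after Gaussian integration, a relative error of order $[\phi'''(\tau_0)/|\phi''(\tau_0)|^{3/2}]^2 \asymp [n/(|\mu|n^{2/3})^{3/2}]^2 = |\mu|^{-3}$. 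Sources (ii) and (iii) are intrinsically $O(|\mu|^{-3})$ independently of $n$, but source (i) is $O(|\mu|n^{-1/3})$ and depends explicitly on $n$: fitting it inside $O(|\mu|^{-3})$ requires $|\mu|n^{-1/3}\leq|\mu|^{-3}$, i.e.\ $|\mu|^4\leq n^{1/3}$, which is precisely the hypothesis $|\mu|\leq n^{1/12}$. Under this hypothesis all three corrections align at the $|\mu|^{-3}$ scale and the claim follows, with the bulk of the routine bookkeeping handled by the classical case of \cite{janson1993birth}.
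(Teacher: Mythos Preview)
The paper does not give its own proof of this lemma: it is explicitly stated ``in a slightly reformulated form without a proof'' and attributed to \cite[Lemma~3]{janson1993birth}. There is therefore no in-paper argument to compare your attempt against; the only relevant comparison is with the original saddle-point analysis of \cite{janson1993birth}, and your sketch follows that analysis faithfully. You correctly perform the Lagrangean substitution $\tau=T(z)$, identify the degenerate saddle at $\tau=1$ when $m=n/2$, compute the displaced saddle $1-\tau_0=|\mu|n^{-1/3}$ and curvature $\phi''(\tau_0)\asymp |\mu|n^{2/3}$, and isolate the three error sources whose balancing produces both the $O(|\mu|^{-3})$ correction and the constraint $|\mu|\le n^{1/12}$.

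One small point you glossed over: the paper's reformulation differs from the original in \cite{janson1993birth} not only by the factor $H(T(z))$ but also by replacing the multigraph count $|\mathcal{MG}(n,m)|$ with the simple-graph count $|\mathcal G(n,m)|$ and by inserting the unicycle factor $e^{V(z)}$. You handle $e^{V(z)}=(1-\tau)^{-1/2}e^{-\tau/2-\tau^2/4}$ correctly, and the ratio $|\mathcal{MG}(n,m)|/|\mathcal G(n,m)|$ is an elementary Stirling computation contributing a constant factor that cancels against the $e^{-\tau/2-\tau^2/4}$ term at $\tau=1$; it would be worth stating this cancellation explicitly rather than burying it in ``routine bookkeeping.'' Otherwise the sketch is sound and matches the cited source.
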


In order to apply the above lemma, it is needed to construct a graph from trees,
unicycles and a finite number of complex components as from ``builing bricks'',
and then extract the asymptotic proportion of such graphs.

\begin{remark}
For the subcritical phase \( \mu \to -\infty \), the condition \( |\mu| \leq
n^{1/12} \) can be replaced by \( |\mu| \ll n^{1/3} \), as shown
in~\cite{herve2011random} by a refined
technical analysis of the asymptotics. We do not apply this refinement in the
current paper as it would require a certain amount of additional technical details.

In a similar context, the enumeration of connected graphs has been pushed even
further in~\cite{flajolet2004airy} using purely analytic methods. The resulting
expansions are very tightly related with the Airy function and complex contour
integration in the context of the previous result~\cite{janson1993birth}. The
properties of the Airy function and the phenomenon of coalescing saddles
in combinatorics
are out of the scope of the current paper, and the interested reader can find
additional details \eg in~\cite{aldous1997brownian,banderier2001random}.
\end{remark}

\begin{remark}
    \label{remark:bicyclic:components}
Let us present an exemplary application of~\cref{lemma:sp:integration}
analogous to presented in~\cite{janson1993birth}.
If a simple graph with \( n \) vertices and \( m \) edges
contains only trees and unicycles, then the number of trees is equal to \( m-n
\), because each tree has one edge less than its number of vertices. The EGF for
such graphs is then \( e^{V(z)} U(z)^{n-m} / (n-m)! \).
The factor \( e^{V(z)} \) can be then rewritten as
\[
    e^{V(z)} = \dfrac{1}{\sqrt{1 - T(z)}} e^{-T(z)/2 - T^2(z)/4}
    .
\]
When \( m = \frac{n}{2}(1 + \mu n^{-1/3}) \),
\( \mu \to -\infty \) with \( n \), and \( |\mu| \leq n^{1/12} \),
the probability that a graph consists only of trees and unicycles, is, according
to~\cref{lemma:original:contour:integration}:
\[
    \dfrac{n!}{|\mathcal G(n,m)|} [z^{n}]
    \dfrac{U(z)^{n-m}}{(n-m)!}
    e^{V(z)}
    =
    1 - \dfrac{5 + o(1)}{24 |\mu|^{3}}
    .
\]
The interpretation of the factor \( 5/24 \) is at least twofold.
Formally, it appears as the second expansion term in the saddle point lemma with
\( 5/24 = \left.\frac{3y^2 + 3y - 1}{6}\right|_{y = 1/2} \).
At the same time, it denotes the sum of the compensation factors of the two
possible cubic bicyclic multigraphs (see~\cref{figure:bicyclic:kernels}), also
expressed as the sum of the inverse cardinalities of their automorphism groups.
Furthermore, the probability of having only one complex component which is
bicyclic, is \( \frac{5}{24} |\mu|^{-3} + O(|\mu|^{-6}) \), and the probability
of having a complex component of excess \( r \) is \( e_{r} |\mu|^{-3r} \),
where \( e_r = \frac{(6r)!}{2^{5r} 3^{2r} (3r)! (2r)!} \) corresponds to the sum
of compensation factors of cubic multigraphs of excess \( r \), see
\eg~\cite[Chapter 2]{bollobasrandomgraphs}. For complete asymptotic expansions
in powers of \( |\mu|^{-3} \) we refer to~\cref{section:full:asymptotic:expansion}.
\end{remark}

We give a variant of~\cref{lemma:sp:integration} for the case of directed graphs.

\begin{lemma}
    \label{lemma:sp:digraphs}

    As \( \mu \to -\infty \), and \( |\mu| \leq n^{1/12} \),
    \begin{equation}
        \dfrac{(2n)!}{|\mathcal D(2n,m)|} [z^{2n}]
        \dfrac{U_\to(z)^{2n-m}}{(2n-m)!}
        e^{V_\to(z)}
        \dfrac{H(T_\to(z))}{(1 - 2T_\to(z))^y}
        =
        H(1/2)
        \left(\frac{n^{1/3}}{|\mu|}\right)^y
        (1 + O(|\mu|^{-3}))
    \end{equation}
    \( H(t) \) is a function analytic at \( t = 1/2 \).
\end{lemma}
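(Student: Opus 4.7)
The plan is to reduce this to \cref{lemma:sp:integration} via the rescaling $w = 2z$, which relates the EGFs of weakly connected directed graphs to those of their undirected analogues: by the formulas in \cref{subsection:weakly:connected:digraphs}, $T_\to(z) = \tfrac12 T(2z)$, $U_\to(z) = \tfrac12 U(2z)$, and $V_\to(z) = V(2z) + T(2z)^2/8$. The ``extra'' summand $T_\to^2/2$ in $V_\to$ (which encodes the length-$2$ bidirected cycles allowed in simple digraphs) will be what produces an $e^{1/8}$ factor that must cancel against a corresponding $e^{1/8}$ appearing in the counting ratio $|\mathcal D(2n,m)|/|\mathcal G(2n,m)|$, not against the leading constant.

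Concretely, I substitute $z = w/2$ inside the bracket and use $[z^{2n}]F(z) = 2^{2n}[w^{2n}]F(w/2)$. After collecting the $2$-powers, the quantity inside $[z^{2n}]\,\cdots$ becomes
\[
2^m\,[w^{2n}]\,\frac{U(w)^{2n-m}}{(2n-m)!}\,e^{V(w)}\,\frac{\widetilde H(T(w))}{(1-T(w))^{y}},
\qquad
\widetilde H(t) := e^{t^{2}/8}\,H(t/2).
\]
The function $\widetilde H$ is analytic at $t=1$ since $H$ is analytic at $1/2$, so \cref{lemma:sp:integration} applies with $n'=2n$ vertices and with the parameter $\tilde\mu$ determined by $m = n(1+\mu n^{-1/3}) = \tfrac{2n}{2}(1+\tilde\mu (2n)^{-1/3})$, i.e.\ $\tilde\mu = 2^{1/3}\mu$. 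The factor $(n'{}^{1/3}/|\tilde\mu|)^y$ from the undirected lemma simplifies exactly to $(n^{1/3}/|\mu|)^y$, and $\widetilde H(1) = e^{1/8}H(1/2)$.

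It remains to handle the normalising ratio. Writing $N := \binom{2n}{2} = n(2n-1)$, one has $|\mathcal G(2n,m)| = \binom{N}{m}$ and $|\mathcal D(2n,m)| = \binom{2N}{m}$, and I must show
\[
\frac{|\mathcal D(2n,m)|}{2^{m}|\mathcal G(2n,m)|}
= \prod_{i=0}^{m-1}\frac{1-i/(2N)}{1-i/N}
= e^{1/8}\bigl(1+O(|\mu|^{-3})\bigr).
\]
Taking logs and using $\log(1-x)=-x-x^2/2-\cdots$, the dominant contribution is $\tfrac{1}{2N}\sum_{i<m} i \sim \tfrac{m^{2}}{4N} \sim \tfrac{n^{2}}{8n^{2}} = \tfrac18$; the subleading term is of order $m^{2}\mu n^{-1/3}/N = O(\mu n^{-1/3})$, which under the hypothesis $|\mu|\le n^{1/12}$ is $O(n^{-1/4}) = O(|\mu|^{-3})$, and the remaining tails from higher orders in the Taylor expansion are dominated by this. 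The $e^{1/8}$ thus produced cancels precisely against the $e^{1/8}$ sitting inside $\widetilde H(1)$, leaving the desired leading factor $H(1/2)$.

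The main obstacle is the third step: one must confirm that every correction beyond the explicit $e^{1/8}$ in the binomial-ratio expansion is genuinely $O(|\mu|^{-3})$ under the bound $|\mu|\le n^{1/12}$, and that this error is compatible with the error term already furnished by \cref{lemma:sp:integration} so that they combine additively. The remaining pieces — the algebraic substitution, the analyticity of $\widetilde H$ at $t=1$, and the rescaling $\tilde\mu = 2^{1/3}\mu$ — are routine once the bookkeeping of factors of $2$ is done carefully.
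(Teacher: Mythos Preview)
Your proof is correct and follows essentially the same route as the paper's: substitute $2z=w$ to convert $T_\to,U_\to,V_\to$ into their undirected analogues (absorbing the $T^2/8$ term into the analytic prefactor), invoke \cref{lemma:sp:integration} with $n'=2n$ and $\tilde\mu=2^{1/3}\mu$, and cancel the $e^{1/8}$ from $\widetilde H(1)$ against the $e^{1/8}$ in the ratio $|\mathcal D(2n,m)|/(2^m|\mathcal G(2n,m)|)$. If anything, your treatment of the binomial-ratio error is more careful than the paper's, which simply states the limit $e^{1/8}$ without verifying that the correction fits inside $O(|\mu|^{-3})$.
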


\begin{proof}
    As \( \frac{m}{n} \to 1 \) with \( n \) going to infinity, the number of
    simple digraphs with \( 2n \) vertices and \( m \) edges can be
    asymptotically related to the number of simple graphs with the same number
    of vertices and edges using the Stirling's formula
    \[
        \dfrac{|\mathcal D(2n,m)|}
        {2^m |\mathcal G(2n,m)|}
        = \dfrac{
           { 2 {n \choose 2} \choose m}
        }{
            2^m { {2n \choose 2} \choose m}
        }
        \to e^{1/8}
        , \quad
        \text{as } n \to \infty.
    \]
    Replacing \( |\mathcal D(2n,m)| \) by its asymptotic equivalent,
    \( U_\to(z) \) and \( T_\to(z) \) by
    \( \frac12 U(2z) \) and \( \frac12 T(2z) \), and also
    \( V_\to(z) \) by \( V(2z) + T(2z)^2 / 8 \), we obtain the expression
    \[
        e^{-1/8}\dfrac{(2n)! 2^{-m}}{|\mathcal G(2n,m)|} [z^{2n}]
        \dfrac{2^{-2n+m} U(2z)^{2n-m}}{(2n-m)!}
        e^{V(2z) + T(2z) / 8}
        \dfrac{H(T(2z)/2)}{(1 - T(2z))^y}
        .
    \]
    After substituting \( 2z = x \), we get an additional \( 2^{2n} \) from the
    operator of coefficient extraction \( [z^{2n}] \), and therefore, all the
    powers of two cancel out. We proceed
    by applying~\cref{lemma:sp:digraphs} and replacing each \( T(x) \) by \( 1
    \). Since the lemma is designed for a different regime, namely
    \( m = \frac{n}{2}(1 + \mu n^{-1/3}) \),
    we can adapt by replacing \( n
    \mapsto 2n \), \( \mu \mapsto 2^{1/3} \mu \).
    Finally, the factor \( e^{-1/8} \) also cancels out, because of the presence
    of the multiple \( e^{T(x) / 8} \), and the ratio of \( (2n)^{1/3} \) to
    \( 2^{1/3} |\mu| \) becomes again \( n^{1/3} / |\mu| \).
\end{proof}

\section{Extracting the asymptotics}
\label{section:asymptotic:analysis}

\subsection{Contradictory components in simple digraphs}
Before constructing the sum-representation digraphs with marked directed
subgraphs, we start by constructing the subcritical \emph{simple} digraphs first.
We are going to mark the same contradictory patterns, but without an additional
assumption that the paths are strictly distinct and that the digraph is conflict
free, and without excluding the edges \( x \to \n x \).

As we will see later in~\cref{subsection:excluding:non:sum:representations},
this set has a very similar structure to the set of sum-representation digraphs
\( \mathcal D^\circ(2n, m) \): the number of edge conflicts in a random digraph
\( D \in \mathcal D(2n, m) \) asymptotically follows a Poisson distribution with
parameter \( 1/8 \). Excluding edge conflicts is equivalent to conditioning on the
value of this Poisson variable being zero.

We start by demonstrating a result directly related to the probability of
satisfiability in the subcritical phase, which is shown by Kim
in~\cite{kim2008finding} to be asymptotically
\[
    \mathbb P(F \in \mathcal F(n,m)\text{ is SAT})
    =
    1 - \frac{1 + o(1)}{16 |\mu|^3}
\]
for \( m = n (1 + \mu n^{-1/3}) \), as \( \mu \to -\infty\) with \( n \),
and \( |\mu| = o(n^{1/3}) \). In this section we give a new explanation of the
factor \( 1/16 \) and show how to extend this result
to obtain a complete asymptotic distribution in powers of \( |\mu|^{-3} \).

The contradictory pattern that we are going to identify first, takes a form of
three sequences \( x \rightsquigarrow \n x \rightsquigarrow y \rightsquigarrow
\n y \) (\cref{fig:second:contradictory:pattern}). We start by considering the
case when this pattern is a part of a weakly connected component which is a
tree (\cref{fig:anti:pruning}).

\begin{lemma}
\label{lemma:contradictory:simple:digraphs}
Consider simple digraphs \( G \in \mathcal D(2n, m) \) with a
distinguished contradictory pattern \( x \rightsquigarrow \n x \rightsquigarrow
y \rightsquigarrow \n y \) such
that the weakly connected component containing this pattern is a tree,
and there are no weakly connected components whose non-oriented projections have
positive excess. The proportion of such digraphs among \( \mathcal D(2n,m) \)
where each such digraph is taken with weight \( 2^\ell \), where \( \ell \) is
the length of the pattern, is
\begin{equation}
    \dfrac
        {(2n)!}
        {|\mathcal D(2n, m)|}
    [z^{2n}]
    z \! \int \! z \! \int
    \left[
        \dfrac
            {U_\to(z)^{2n-m-1}}
            {(2n-m-1)!}
        e^{V_\to(z)}
        \dfrac
            {8 T_\to(z)^4 z^{-4}}
            {(1 - 2 T_\to(z))^3}
    \right]
    dz^2
    =
    \dfrac{1}{2 |\mu|^3} + O(|\mu|^{-6})
    .
\end{equation}
\end{lemma}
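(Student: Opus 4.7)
The plan is to (i) justify the stated generating function via the symbolic method and (ii) apply \cref{lemma:sp:digraphs} to extract the asymptotic.

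For step (i), every $G \in \mathcal D(2n,m)$ of the described form has its underlying projection a forest consisting of trees and unicycles, with exactly $2n-m$ tree components. One tree $C$ contains the pattern; the remaining $2n-m-1$ trees together with all unicycles contribute $U_\to(z)^{2n-m-1}/(2n-m-1)!\cdot e^{V_\to(z)}$. Within $C$, a key structural observation is that the three directed paths must be pairwise edge-disjoint (a simple tree cannot orient one edge in both directions), which forces $x, \bar x, y, \bar y$ to lie in that order on a common simple undirected path in $C$, with edges oriented forward. Thus $C$ decomposes as a directed spine carrying the four special vertices, together with side-subtrees hanging off every spine vertex. Each intermediate spine vertex contributes $z \cdot e^{2T_\to(z)} = T_\to(z)$; each spine edge carries the weight $2$ (to account for the $2^\ell$ weighting per pattern-edge); and each special vertex contributes only its side-subtree factor $e^{2T_\to(z)} = T_\to(z)/z$, being an empty node for the Link operator. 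Summing over the lengths of the three segments gives $(2/(1-2T_\to(z)))^3 = 8/(1-2T_\to(z))^3$, and multiplying by $(T_\to(z)/z)^4$ for the four empty nodes yields $8 T_\to(z)^4 z^{-4}/(1-2T_\to(z))^3$. Finally, applying $\Link^2$ installs two complementary pairs of labels on the four empty slots, producing the integrand in the statement.

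For step (ii), using the identity $T_\to(z)/z = e^{2T_\to(z)}$ (so that $T_\to^4 z^{-4} = e^{8 T_\to}$) together with $U_\to = T_\to(1-T_\to)$, the integrand rewrites as
\[
    F(z) = (2n-m)\cdot\frac{U_\to(z)^{2n-m}}{(2n-m)!}\cdot e^{V_\to(z)}\cdot\frac{H(T_\to(z))}{(1-2T_\to(z))^3}, \quad H(t) = \frac{8 e^{8t}}{t(1-t)},
\]
which fits the hypotheses of \cref{lemma:sp:digraphs} with $y=3$ and $H(1/2) = 32 e^4$. Applying the lemma yields $\frac{(2n)!}{|\mathcal D(2n,m)|}[z^{2n}] F(z) = 32 e^4(2n-m)(n^{1/3}/|\mu|)^3(1+O(|\mu|^{-3}))$.

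To recover the stated quantity, I use the coefficient identity $[z^{2n}] z\int z\int F\,dz^2 = [z^{2n-4}]F/((2n-1)(2n-3))$, together with the fact that shifting the index by $4$ multiplies the coefficient asymptotic by $z_0^4 = 1/(16 e^4)$ to leading order (where $z_0 = 1/(2e)$ is the dominant singularity; in the regime $|\mu|\leq n^{1/12}$ the saddle point sits within $O(\mu n^{-1/3})$ of $z_0$, a correction absorbed in the error term). Combining with $(2n-1)(2n-3)\sim 4n^2$ and $(2n-m)\sim n$ yields
\[
    \frac{1/(16 e^4)}{4n^2}\cdot 32 e^4 \cdot n \cdot \frac{n}{|\mu|^3} = \frac{1}{2|\mu|^3},
\]
with residual error $O(|\mu|^{-6})$ absorbed from the saddle point expansion. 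The main obstacle is the combinatorial justification of step (i): verifying that the three directed paths must be edge-disjoint and therefore combine into a single simple path in $C$, and that the $2^\ell$-weighting distributes correctly onto the spine edges to produce $(1-2T_\to)$ rather than $(1-T_\to)$ in the denominator.
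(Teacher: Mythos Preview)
Your proof is correct and follows essentially the same architecture as the paper. Step~(i) matches the paper's symbolic construction; you add the explicit combinatorial remark that the three directed paths must be edge-disjoint in a tree component, which the paper leaves implicit in its reference to \cref{fig:anti:pruning}.

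In step~(ii), however, you take an unnecessary detour. By rewriting $T_\to(z)^4 z^{-4}$ as $e^{8T_\to(z)}$ you lose the explicit $z^{-4}$ factor and are then forced to argue that $[z^{2n-4}]F \sim z_0^{4}\,[z^{2n}]F$ via a saddle-point shift heuristic; this is true but is not a direct consequence of \cref{lemma:sp:digraphs} as stated, and your one-line justification would need to be expanded to be rigorous. The paper avoids this entirely: it keeps $z^{-4}$ in the integrand and uses the \emph{exact} identity
\[
[z^{2n-4}]\Bigl(G(z)\,z^{-4}\Bigr)=[z^{2n}]G(z),
\]
so that \cref{lemma:sp:digraphs} applies directly at index $2n$ with $H(T_\to)=8T_\to^4(2n-m)U_\to^{-1}$ and $H(1/2)\sim 2n$. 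This yields $\frac{2n}{(2n-1)(2n-3)}\cdot\frac{n}{|\mu|^3}\sim\frac{1}{2|\mu|^3}$ without any appeal to the location of the saddle or to $e^4$ factors that must later cancel.
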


\begin{proof}
The weakly connected component containing a distinguished pattern \( x
\rightsquigarrow \n x \rightsquigarrow y \rightsquigarrow \n y \) can be
represented as three sequences of trees, each sequence of length at least one,
and an additional tree (\cref{fig:anti:pruning}). The generating function for
one such sequence, equipped with a weight \( 2^\ell \), where \( \ell \) is the
length of this sequence, is
\[
    \sum_{\ell \geq 1} T_\to(z)^\ell 2^\ell = \dfrac{2 T_\to(z)}
    {1 - 2 T_\to(z)}
    .
\]
Taking three such sequences and adding a triple multiple \( 2 \)
corresponding to linking three directed edges between the sequences, and by
adding the last tree, we get an EGF
\( \frac{8 T_\to(z)^4}{(1 - 2 T_\to(z))^3} \).
The next step is to divide by \( z^4 \) which corresponds to erasing the labels
of the four distinguished nodes (which will later become the labels \( x \),
\( \n x \), \( y \), and \( \n y \)).

\begin{figure}[hbt]
\centering
\begin{tikzpicture}[>=stealth',thick, node distance=1.0cm]
\draw
node[arnBleuPetit, minimum height = 1.2em, very thick](x)
                    at (0,0)           {\( x \)}
node[arnBleuPetit](x1) at (0,-1) {}
node[arnBleuPetit](x11) at (0,-2) {}
node[arnBleuPetit](a) at (1,0) {}
node[arnBleuPetit](a1) at (1,-1) {}
node[arnBleuPetit](a11) at (0.5,-2) {}
node[arnBleuPetit](a12) at (1,-2) {}
node[arnBleuPetit](a13) at (1.5,-2) {}
node[arnBleuPetit](s) at (2,0) {}
node[arnBleuPetit](s1) at (1.7,-1) {}
node[arnBleuPetit](s2) at (2.3,-1) {}
node[arnRougePetit, minimum height = 1.2em, very thick](x')
                    at (3,0)           {\( \fn x \)}
node[arnBleuPetit](d) at (4,0) {}
node[arnBleuPetit](d1) at (4,-1) {}
node[arnBleuPetit](d11) at (3.5,-2) {}
node[arnBleuPetit](d12) at (4.5,-2) {}
node[arnBleuPetit](f) at (5,0) {}
node[arnBleuPetit](f1) at (5,-1) {}
node[arnBleuPetit, minimum height = 1.2em, very thick](y)
                    at (6,0)           {\( y \)}
node[arnBleuPetit](y1) at (6,-1) {}
node[arnBleuPetit](y11) at (5.5,-2) {}
node[arnBleuPetit](y12) at (6.5,-2) {}
node[arnBleuPetit](y2) at (6.5,-1) {}
node[arnBleuPetit](g) at (7,0) {}
node[arnBleuPetit](g1) at (7,-1) {}
node[arnRougePetit, minimum height = 1.2em, very thick](y')
                    at (8,0)           {\( \fn y \)}
node[arnBleuPetit](y'1) at (8,-1) {}
;
\fromto{x}{a}     [very thick][blackred, thick][4][1];
\fromto{a}{s}     [very thick][blackred, thick][-4][-1];
\fromto{s}{x'}    [very thick][blackred, thick][4][1];
\fromto{x'}{d}    [very thick][blackred, thick][4][1];
\fromto{d}{f}     [very thick][blackred, thick][-4][-1];
\fromto{f}{y}     [very thick][blackred, thick][4][1];
\fromto{y}{g}     [very thick][blackred, thick][-4][-1];
\fromto{g}{y'}    [very thick][blackred, thick][4][1];
\sedge{x}{x1}     [][4];
\sedge{x1}{x11}   [][4];
\sedge{a}{a1}     [][4];
\sedge{a1}{a11}   [][4];
\sedge{a1}{a12}   [][4];
\sedge{a1}{a13}   [][4];
\sedge{s}{s1}     [][4];
\sedge{s}{s2}     [][4];
\sedge{d}{d1}     [][4];
\sedge{d1}{d11}   [][4];
\sedge{d1}{d12}   [][4];
\sedge{f}{f1}     [][4];
\sedge{y}{y1}     [][4];
\sedge{y}{y2}     [][4];
\sedge{y1}{y11}   [][4];
\sedge{y1}{y12}   [][4];
\sedge{g}{g1}     [][4];
\sedge{y'}{y'1}   [][4];
\end{tikzpicture}
\caption{\label{fig:anti:pruning}The case when the weakly connected component of
the contradictory pattern is a tree.}
\end{figure}
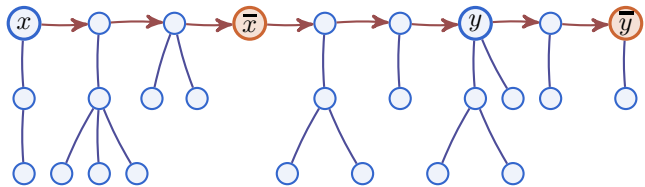

The digraphs required in the current lemma are obtained as product of the family
of directed trees and unicycles and the constructed contradictory pattern,
through literal linking construction
(see~\cref{definition:literal:linking:construction}).
By applying the complementary label
insertion operation \( \Lambda [A](z)  = z \! \int \! A(z) dz \)
from~\cref{lemma:complementary:literal:insertion:operator} twice, we obtain the
desired EGF.  The (weighted) number of digraphs is obtained by taking the \(
(2n) \)th coefficient of the EGF.

Since the number of trees in the forest of directed trees (without the
distinguished one) is \( (2n-m-1) \), the EGF of digraphs containing this forest
and set of directed unicycles is then
\[
        \dfrac
            {U_\to(z)^{2n-m-1}}
            {(2n-m-1)!}
        e^{V_\to(z)}
        ,
\]
and multiplication by the EGF of the distinguished tree with removed nodes
\(
\frac
    {8 T_\to(z)^4 z^{-4}}
    {(1 - 2 T_\to(z))^3}
\) and double application of the complementary label insertion operator finishes
the construction.

The asymptotic analysis of the expression is done in two steps.
Firstly, we get rid of the operator \( \Lambda \) by using the properties
\[
    [z^n] z F(z) = [z^{n-1}] F(z),
    \quad
    [z^n] \int F(z) dz = \dfrac{1}{n}[z^{n-1}] F(z).
\]
We then obtain
\begin{align*}
    &
    [z^{2n}]
    z \! \int \! z \! \int
    \left[
        \dfrac
            {U_\to(z)^{2n-m-1}}
            {(2n-m-1)!}
        e^{V_\to(z)}
        \dfrac
            {8 T_\to(z)^4 z^{-4}}
            {(1 - 2 T_\to(z))^3}
    \right]
    dz^2
    \\ &
    =
    \dfrac{1}{(2n-1)(2n-3)}
    [z^{2n-4}]
        \dfrac
            {U_\to(z)^{2n-m-1}}
            {(2n-m-1)!}
        e^{V_\to(z)}
        \dfrac
            {8 T_\to(z)^4 z^{-4}}
            {(1 - 2 T_\to(z))^3}
    \\ &
    =
    \dfrac{1}{(2n-1)(2n-3)}
    [z^{2n}]
        \dfrac
            {U_\to(z)^{2n-m-1}}
            {(2n-m-1)!}
        e^{V_\to(z)}
        \dfrac
            {8 T_\to(z)^4}
            {(1 - 2 T_\to(z))^3}
            .
\end{align*}
The second step is to apply~\cref{lemma:sp:digraphs} taking \( H(t) \) such that
\( H(T_\to(z)) = 8 T_\to(z)^4 (2n-m) U_\to(z)^{-1} \), \( y = 3 \).
Since \( U_\to(z) = T_\to(z) - T_\to(z)^2 \), the resulting value \( H(1/2) \)
will be \( 8/16 \cdot (2n-m) \cdot 4 \sim 2n \) when \( m/n \to 1 \).
An application of the lemma gives
\begin{align*}
&
    \dfrac{(2n)!}{|\mathcal D(2n,m)|}
    \dfrac{1}{(2n-1)(2n-3)}
    [z^{2n}]
    \dfrac{U_\to(z)^{2n-m}}{(2n-m)!}
    e^{V_\to(z)}
    \dfrac{H(T_\to(z))}{(1 - 2T_\to(z))^3}
    \\
    &
    =
    \dfrac{2n (1 + \mu n^{-1/3})}
    {(2n-1)(2n-3)}
    \left(\frac{n^{1/3}}{|\mu|}\right)^3
    (1 + O(|\mu|^{-3}))
    = \dfrac{1}{2 |\mu|^3} + O(|\mu|^{-6})
    .
\end{align*}
\end{proof}

The weakly connected component containing the pattern \( x \rightsquigarrow \n x
\rightsquigarrow y \rightsquigarrow \n y \) may happen to be a unicycle
(see~\cref{fig:weak:component:unicycle}), or a complex component of excess at
least \( 1 \)
(see~\cref{fig:weak:component:bicycle}). The constructions analogous
to~\cref{lemma:contradictory:simple:digraphs} counting simple digraphs with a
distinguished contradictory pattern weighted as \( 2^\ell \), where \( \ell \)
is the length of the pattern, yield asymptotics of order \( O(|\mu|^{-6}) \) or
smaller. This concept can be illustrated by the unicyclic case.

Instead of the three sequences with the generating function \( \frac{1}{(1 - 2
T_\to(z))^3} \) we consider 6 sequences (possibly empty). There are several
different mutual configurations of the distinguished contradidctory pattern and
the unicycle which contains this pattern, but all can be described by \( 6 \)
sequences. For one of these sequences the directions of the edges are not fixed,
but its length is not accounted in the weight of the graph. Therefore, the
generating function for such a sequence is also proportional to \( \frac{1}{1 -
2 T_\to(z)} \), where a factor \( 2 \) in front of \( T_\to(z) \) stands for the
choice of orientation of each edge. The number of trees in such a graph
remains \( (2n-m) \), and so, the contribution obtained
from~\cref{lemma:sp:digraphs} has order
\( \Theta\left(\frac{1}{(2n-1)(2n-3)} \cdot \frac{n^2}{|\mu|^{6}}\right)
    = \Theta(|\mu|^{-6}) \).

\begin{figure}[hbt]
    \begin{minipage}[t]{0.46\textwidth}
\centering
\begin{tikzpicture}[>=stealth',thick, node distance=1.0cm]
\draw
node[arnBleuPetit, minimum height = 1.2em, very thick](q1)
    at (360/8*1 : 1.2)  {\( y \)}
node[arnRougePetit, minimum height = 1.2em, very thick](q2)
    at (360/8*3 : 1.2)  {\( \fn x \)}
node[arnVertPetit](q3) at (360/8*5 : 1.2)  {}
node[arnBleuPetit, minimum height = 1.2em, very thick](q31)
    at ($(q3)+(360/8*3 : 1.2)$)  {\( x \)}
node[arnVertPetit](q4) at (360/8*7 : 1.2)  {}
node[arnRougePetit, minimum height = 1.2em, very thick](q41)
    at ($(q4)+(360/8*7 : 1.2)$)  {\( \fn y \)}
;
\fromto{q31}{q3}   [blackred, ultra thick][very thick][10][3][0.6];
\fromto{q4}{q41}   [blackred, ultra thick][very thick][-10][-3][0.6];
\fromto{q3}{q2}    [blackred, ultra thick][very thick][-30][-5][0.6];
\fromto{q2}{q1}    [blackred, ultra thick][very thick][-30][-5][0.6];
\fromto{q1}{q4}    [blackred, ultra thick][very thick][-30][-5][0.6];
\sedge{q4}{q3}     [][-30];
\end{tikzpicture}
\caption{\label{fig:weak:component:unicycle}Contradictory path in component of excess \( 0 \).}
    \end{minipage}\hfill
    \begin{minipage}[t]{0.46\textwidth}
        \centering
\begin{tikzpicture}[>=stealth',thick, node distance=1.0cm]
\draw
node[arnBleuPetit, minimum height = 1.2em, very thick](q1)
    at (30*1 : 1.2)  {\( y \)}
node[arnVertPetit](q2) at (30*3 : 1.2)  {}
node[arnRougePetit, minimum height = 1.2em, very thick](q3)
    at (30*5 : 1.2)  {\( \fn x \)}
node[arnVertPetit](q4) at (30*7 : 1.2)  {}
node[arnVertPetit](q5) at (30*9 : 1.2)  {}
node[arnVertPetit](q6) at (30*11 : 1.2)  {}
node[arnBleuPetit, minimum height = 1.2em, very thick](q41)
    at ($(q4)+(30*5 : 1.2)$)  {\( x \)}
node[arnRougePetit, minimum height = 1.2em, very thick](q61)
    at ($(q6)+(30*11 : 1.2)$)  {\( \fn y \)}
;
\fromto{q41}{q4}   [blackred, ultra thick][very thick][10][3][0.6];
\fromto{q6}{q61}   [blackred, ultra thick][very thick][-10][-3][0.6];
\fromto{q4}{q3}    [blackred, ultra thick][very thick][-10][-5][0.6];
\fromto{q3}{q2}    [blackred, ultra thick][very thick][-10][-5][0.6];
\fromto{q2}{q1}    [blackred, ultra thick][very thick][-10][-5][0.6];
\fromto{q1}{q6}    [blackred, ultra thick][very thick][-10][-5][0.6];
\sedge{q6}{q5}     [][-10];
\sedge{q5}{q4}     [][-10];
\sedge{q2}{q5}     [][-3];
\end{tikzpicture}
\caption{\label{fig:weak:component:bicycle}Contradictory path in component of excess \( 1 \).}
    \end{minipage}\hfill
\end{figure}

\begin{remark}
The same type of reasoning is used to show that the terms \( \mathbb E \xi_r \)
with \( r \geq 2 \) and \( \mathbb E \xi (\xi - 1) \cdots (\xi - k + 1) \) for
\( k \geq 2 \) do not contribute to the term of order \( \Theta(|\mu|^{-3}) \)
and start contributing from \( \Theta(|\mu|^{-6}) \). Refinement of this
technique using the complete asymptotic expansion of the saddle point lemma
(see the refinement of~\cref{lemma:sp:integration}
in~\cref{section:full:asymptotic:expansion})
can be used to obtain the complete asymptotic
expansions of the probability of satisfiability in powers of \( |\mu|^{-3} \).
The problem of enumeration of the mutual combinations of tuples of minimal
contradictory components inside an implication digraph, in order to compute the
factorial moments of \( \xi \), seems to be a challenging task.
\end{remark}

\subsection{From simple digraphs to sum-representations}
\label{subsection:excluding:non:sum:representations}

The two details peculiar to sum-representations that were not treated in the
previous section are the following: firstly, all the literals of the paths \( x
\rightsquigarrow \n x \), \( \n x \rightsquigarrow y \), \( y \rightsquigarrow
\n y \) of the distinguished pattern, except \( x \) and \( \n x \),
and \( y \) and \( \n y \), should be pairwise strictly distinct; secondly, there
should be no edge conflicts, \ie pairs of complementary edges. Note that a
presence of an edge \( x \to \n x \) automatically induces a conflict, so there
should be no such edges as well.

Excluding these cases requires inclusion-exclusion: in order to count the
instances not containing conflicts and complemetary literal pairs, we count the
instances with distinguished conflicts and distinguished complementary literal
pairs, and then take the alternating sum over them. The two
inclusions-exclusions can be done independently.

\begin{lemma}[Excluding complementary literals on the paths]
    \label{lemma:sd}
    Among the digraphs with a distinguished pattern \( x \rightsquigarrow \n x
    \rightsquigarrow y \rightsquigarrow \n y \), taken with weight \( 2 \) to
    the power of the length of the pattern, the asymptotic proportion of
    digraphs in which there exist two literals on the pattern which are not
    strictly distinct (except the pairs \( x \) and \( \n x \), and \( y \) and
    \( \n y \)), is only \( O(n^{-1/3} |\mu|^{-2}) \).
\end{lemma}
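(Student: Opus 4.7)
The plan is to apply a union bound over the location and identity of the extra complementary pair of literals on the pattern. A \emph{bad} configuration requires two vertices $u_1, u_2$ on the pattern whose underlying variables coincide (i.e.\ whose labels are complementary), and which do not form either of the canonical endpoint pairs $\{x, \n x\}$ or $\{y, \n y\}$. Since labels in the symbolic construction are globally distinct, this forces both $u_1$ and $u_2$ to be \emph{interior} tree-roots of the sequence structure of~\cref{lemma:contradictory:simple:digraphs}; no ``bad'' interior vertex can have the same label as an endpoint or its complement.

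For each such placement of $(u_1, u_2)$, I would modify the generating function construction of~\cref{lemma:contradictory:simple:digraphs} in three steps: (i) mark $u_1$ and $u_2$ as distinguished positions on the pattern, which splits the affected sequence (or sequences) into additional sub-sequences; (ii) divide by $z^2$ to remove the labels at these two positions; (iii) apply one additional literal-linking operator $\Link$ from~\cref{lemma:complementary:literal:insertion:operator} to reinsert a complementary label pair. Whether $u_1$ and $u_2$ lie on the same sequence (which becomes three sub-sequences) or on two different sequences (each split into two), the bookkeeping gives exactly two extra factors of $\frac{1}{1-2T_\to(z)}$ in the denominator, plus a $T_\to(z)^2$ contribution from the two marked tree-roots.

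I would then apply~\cref{lemma:sp:digraphs} with $y = 5$ rather than $y = 3$, picking up an additional factor of $(n^{1/3}/|\mu|)^2$ compared with the original estimate. Three applications of $\Link$ instead of two contribute an extra prefactor of order $1/(2n)$, using the identities
\[
    [z^{2n}] z F(z) = [z^{2n-1}] F(z), \qquad
    [z^{2n}] \textstyle\int F(z)\, dz = \frac{1}{2n}[z^{2n-1}] F(z).
\]
Multiplicative constants arising from the finite case analysis (which sequence is split, and whether the two markers are on the same or on two different sequences) are all $O(1)$. Combining these contributions, the overall proportion is bounded by
\[
    O\!\left(\tfrac{1}{2n}\cdot\tfrac{n^{2/3}}{|\mu|^2}\cdot\tfrac{1}{|\mu|^3}\right)
    = O\!\left(\tfrac{1}{n^{1/3}|\mu|^{5}}\right)
    = O\!\left(n^{-1/3}|\mu|^{-2}\right),
\]
which is the claimed bound.

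The main obstacle will be the careful case-by-case verification that the marker bookkeeping (markers on the same versus different sub-paths, including the degenerate possibility that a marker falls at an internal sequence boundary) truly yields the same net power of $(1-2T_\to)^{-1}$ and correctly tracks the weight $2^\ell$ after the sub-sequences have been relabelled. Configurations with two or more extra complementary pairs can be dominated by the same inclusion--exclusion-style argument: each successive pair costs an additional factor of $O(n^{-1/3}|\mu|^{-2})$, so all higher-order contributions are absorbed into the stated bound.
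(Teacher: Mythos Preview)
Your approach is essentially the paper's: mark the extra complementary pair on the pattern, split the three sequences into five, apply a third $\Lambda$, and invoke \cref{lemma:sp:digraphs} with $y=5$; the paper frames this as computing the first factorial moment $\mathbb{E}X$ in an inclusion--exclusion, you frame it as a union bound, but it is the same computation. One cosmetic slip: your displayed product $\tfrac{1}{2n}\cdot\tfrac{n^{2/3}}{|\mu|^2}\cdot\tfrac{1}{|\mu|^3}$ is the \emph{absolute} weighted proportion (pattern and bad pair together, among all digraphs), whereas the lemma asks for the \emph{conditional} proportion among patterned digraphs---dividing out the base $\Theta(|\mu|^{-3})$ yields $\Theta(n^{-1/3}|\mu|^{-2})$ directly, without the detour through $|\mu|^{-5}\le|\mu|^{-2}$.
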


\begin{proof}
    Let a random variable \( X \) denote the number of pairs of complementary
    literals on the distinguished pattern, not counting \( x \) and \( \n x \),
    and \( y \) and \( \n y \). Using the inclusion-exclusion principle, we can
    express the probability of the event \( [X = 0] \) as
    \[
        \mathbb P(X = 0) = 1 - \mathbb E X + \dfrac{1}{2!} \mathbb E X(X-1) -
        \cdots,
    \]
    where \( \mathbb E X \) corresponds to the proportion of digraphs having a
    distinguished pattern and a distinguished additional pair of complementary
    literals \( z \) and \( \n z \). Further terms correspond to distinguishing
    several pairs of complementary literals, etc.

    By marking two complementary literals in the case when a contradictory
    pattern is a tree (see~\cref{lemma:contradictory:simple:digraphs}
    and~\cref{fig:anti:pruning}), we obtain \( 5 \) sequences instead of \( 3
    \), and we need three applications of the label insertion operator \(
    \Lambda \) instead of just two. We then obtain a generating function
    \begin{equation}
        F(z)
        =
        \Lambda^3 \left(
            \dfrac{
                U_{\to}(z)^{2n-m-1}
            }{
                (2n-m-1)!
            }
            e^{V_\to(z)}
            \dfrac{2^5 T_\to(z)^6 z^{-6}}{(1 - 2 T_\to(z))^5}
        \right)
    \end{equation}
    and after extracting the asymptotics using~\cref{lemma:sp:digraphs}, we
    obtain
    \[
        \mathbb E X
        =
        \dfrac{
        [z^{2n}]
        \Lambda^3 \left(
            \dfrac{
                U_{\to}(z)^{2n-m-1}
            }{
                (2n-m-1)!
            }
            e^{V_\to(z)}
            \dfrac{2^5 T_\to(z)^6 z^{-6}}{(1 - 2 T_\to(z))^5}
        \right)
        }
        {[z^{2n}]
        \Lambda^2 \left(
            \dfrac{
                U_{\to}(z)^{2n-m-1}
            }{
                (2n-m-1)!
            }
            e^{V_\to(z)}
            \dfrac{2^3 T_\to(z)^4 z^{-4}}{(1 - 2 T_\to(z))^3}
        \right)
        }
        =
        \Theta(n^{-1/3} |\mu|^{-2})
    \]
    The same factor \( n^{-1/3} \) multiplied by a polynomial of \( |\mu|^{-1}
    \) and \( n^{-1/3} \) appears when the weakly connected component containing
    the contradictory pattern is not necessarily a tree.
    Further factorial moments of \( X \) will have respective contributions of
    order \( n^{-k/3} \), \( k \geq 2 \).
    From~\cite{janson1993birth} it is known that the components with infinite
    excess appear with an asymptotic zero probability, therefore, the interiors
    of the distinguished paths have pairwise strictly distinct literals with a
    probability of \( 1 - O(n^{-1/3} |\mu|^{-2}) \).
\end{proof}

As opposed to the first inclusion-exclusion on the number of complementary
literals in the distinguished pattern,
it can be shown that the number of complementary edge pairs in the whole graph
follows a limiting Poisson distribution with parameter \( 1/8 \), so the absence
of conflicting edges comes at a positive probability \( e^{-1/8} \). In this
paper, we focus only on the probability of not having edge conflicts.

\begin{lemma}[Excluding edge conflicts]
    With an asymptotic probability \( e^{-1/8} \), the digraphs
    from \( \mathcal D(2n, m) \)
    with a
    distinguished pattern \( x \rightsquigarrow \n x \rightsquigarrow y
    \rightsquigarrow \n y \) weighted according to \( 2 \) to the power of its
    length, are conflict-free.
\end{lemma}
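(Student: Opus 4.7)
The plan is to establish Poisson convergence of the number $Y$ of conflicting edge pairs in a random digraph from $\mathcal D(2n,m)$ carrying the distinguished pattern (weighted by $2^\ell$, where $\ell$ is the pattern length), with limiting parameter $\lambda = 1/8$. Once this is done, the proportion of conflict-free digraphs is $\mathbb P(Y=0) \to e^{-1/8}$, as required. The key quantitative input is the heuristic edge-count: a random digraph on $2n$ vertices with $m = n(1+\mu n^{-1/3}) \sim n$ edges contains roughly $2n(n-1)$ unordered complementary edge pairs in its underlying substrate, each appearing jointly with probability $\sim m(m-1)/\bigl(2n(2n-1)\bigr)^2 \sim 1/(16 n^2)$, giving $\mathbb E Y \sim 1/8$.

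The method of proof is the factorial-moment inclusion-exclusion, exactly parallel to the previous lemma: one writes
\begin{equation*}
    \mathbb P(Y = 0) \;=\; \sum_{k \geq 0} \frac{(-1)^k}{k!}\, \mathbb E\bigl(Y(Y-1)\cdots (Y-k+1)\bigr),
\end{equation*}
and shows termwise that $\mathbb E\bigl(Y(Y-1)\cdots (Y-k+1)\bigr)/k! \to (1/8)^k/k!$. Each factorial moment is interpreted combinatorially as the (weighted, normalised) count of digraphs carrying the distinguished pattern together with $k$ additionally distinguished unordered pairs of complementary edges $\{e_i, \bar e_i\}_{i=1}^k$. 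The generating function for such marked configurations is obtained by taking the GF from \cref{lemma:contradictory:simple:digraphs} and inserting, for each extra complementary pair, two directed edges anchored on four complementary literals (i.e.\ two Boolean variables). Concretely, each additional pair contributes a double application of the operator $\Lambda$ (to insert the two pairs of complementary literal labels), a factor that introduces $2$ new edges (absorbed by decrementing the number of available trees from $2n-m$ to $2n-m-1$ for each pair), and a factor $z^{-4}$ to strip the four literal slots as in the pattern construction.

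Applying the saddle-point lemma for digraphs (\cref{lemma:sp:digraphs}) then extracts the asymptotics: each marked complementary pair contributes an extra factor asymptotic to $1/8$ to the ratio, because the $\Lambda^2$-operator contributes $1/(2n-1)(2n-3) \sim 1/(4n^2)$ while the combinatorial placement of the two extra edges contributes $\sim 2n^2/4$ (two variable choices times the combinatorial weight of the two directed edges among $T_\to(z)$-trees evaluated at $T_\to = 1/2$). The leading-order saddle-point evaluation of the denominator is unchanged (it is the same as in \cref{lemma:contradictory:simple:digraphs}), so the ratio is asymptotically $(1/8)^k/k!$, the $k!$ coming from the unordered choice of the $k$ marked pairs.

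The main obstacle, and the step that needs the most care, is to show (a) that the contributions from configurations where some of the $k$ marked complementary pairs intersect the distinguished pattern or each other, or where the complementary pair is self-complementary (an edge of type $x \to \bar x$), are of lower order in $n^{-1/3}|\mu|^{-1}$ and do not affect the limit; and (b) that the inclusion-exclusion sum can be truncated uniformly, so that the exchange of limit and summation is justified. Both points are handled by the same size-bias argument as in the proof of \cref{lemma:sd}: intersections cost at least one label slot, yielding an extra factor of $O(n^{-1/3}|\mu|^{-1})$, and the factorial moments remain dominated by a convergent geometric-type series in $k$, so that dominated convergence yields $\mathbb P(Y=0) \to \sum_{k \geq 0} (-1)^k (1/8)^k / k! = e^{-1/8}$.
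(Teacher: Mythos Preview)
Your proposal is correct and follows essentially the same factorial-moment / inclusion--exclusion strategy as the paper: interpret the $k$th factorial moment as the weighted count of digraphs carrying the pattern together with $k$ distinguished conflicting edge pairs, build the corresponding EGF via extra applications of $\Lambda$, extract $(1/8)^k/k!$ with \cref{lemma:sp:digraphs}, and sum to $e^{-1/8}$. Two small points of bookkeeping where the paper is more careful: (i) it explicitly splits the location of a conflicting pair into four cases (both edges in distinct forest trees, both in the same tree, one in a tree and one in a unicycle/complex component, neither in the forest) and shows only the first gives the leading $1/8$ while the others are down by powers of $n^{-1/3}$---your item (a) covers intersections with the pattern and with each other but not these positional cases; and (ii) each marked conflicting pair removes \emph{two} trees from the forest (one per edge), so the exponent on $U_\to$ drops by $2$ per pair, not by $1$ as you wrote.
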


\begin{proof}
    Following the same inclusion-exclusion principle, we introduce a random
    variable \( X \) denoting the number of conflicting pairs of edges. The
    probability of the event \( [X = 0] \) can be expressed using the
    inclusion-exclusion principle.

    Let us start by computing \( \mathbb E X \), \ie the expected number of the
    conflicting edges. According to the linearity of the mathematical
    expectation, \( \mathbb E X \) equals to the total number of possible
    complementary edge pairs times the probability that one distinguished
    edge pair, say \( 1 \to 2 \), \( \n 2 \to \n 1 \) is involved in a conflict.

    Using~\cref{lemma:complementary:literal:insertion:operator},
    we construct the EGF for the family of digraphs containing
    a distinguished conflict \( x \to y \), \( \n y \to \n x \) by inserting
    two pairs of labels \( x, \n x \) and \( y, \n y \) at the free slots.
    This insertion can happen in several different ways: each of the edges can
    belong to the forest of directed trees, to the set of unicycles, and to the
    complex component (or to the component containing the distinguished pattern
    \( x \rightsquigarrow \n x \rightsquigarrow y \rightsquigarrow \n y\)).

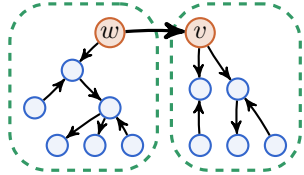
\begin{figure}[hbt]
\centering
\begin{tikzpicture}[>=stealth',thick, node distance=1.0cm]
\draw
node[arnRougePetit, minimum height = 1.1em, thick](R)
    at (0, 0) {\( v \)}
node[arnRougePetit, minimum height = 1.1em, thick](a)
    at ($(R)-(1.2,0)$) {\( w \)}
node[arnBleuPetit](a1)     at ($(a)   + (-.5,-0.5)$)            {}
node[arnBleuPetit](a11)    at ($(a1)  + (0.5,-0.5)$)           {}
node[arnBleuPetit](a111)   at ($(a11) - (0.2,0)+(0.5, -0.5)$) {}
node[arnBleuPetit](a112)   at ($(a11) - (0.2,0)+(0,   -0.5)$) {}
node[arnBleuPetit](a113)   at ($(a11) - (0.2,0)+(-0.5,-0.5)$) {}
node[arnBleuPetit](a12)    at ($(a1)  + (-.5,-0.5)$)           {}
node[arnBleuPetit](R1)     at ($(R)   + (0,-0.75)$)          {}
node[arnBleuPetit](R11)    at ($(R1)  + (0,-0.75)$)          {}
node[arnBleuPetit](R2)     at ($(R)   + (0.5,-0.75)$)        {}
node[arnBleuPetit](R21)    at ($(R2)  + (0,-0.75)$)          {}
node[arnBleuPetit](R22)    at ($(R2)  + (0.5,-0.75)$)          {}
;
\node[dashed,draw,fit=(R)(R211)(R212), very thick,
      rounded corners=5mm,inner sep= 5pt, bgreen] {};
\node[dashed,draw,fit=(a)(a12)(a11)(a111)(a113), very thick,
      rounded corners=5mm,inner sep= 5pt, bgreen] {};
\fromto{a}{R}     [ultra thick][very thick][-4][-1];
\fromto{a}{a1}     [thick][thick][4];
\fromto{a1}{a11}   [thick][thick][4];
\fromto{a111}{a11} [thick][thick][4];
\fromto{a11}{a112} [thick][thick][4];
\fromto{a11}{a113} [thick][thick][4];
\fromto{a12}{a1}   [thick][thick][4];
\fromto{R}{R1}     [thick][thick][4];
\fromto{R}{R2}     [thick][thick][4];
\fromto{R11}{R1}   [thick][thick][-4];
\fromto{R2}{R21}   [thick][thick][4];
\fromto{R22}{R2}   [thick][thick][4];
\end{tikzpicture}
\caption{\label{fig:conflict:edge}Digraph tree with a marked edge and two empty
slots.}
\end{figure}

    First, consider the case when each of the distinguished edges belongs to a
    distinct tree in a forest of directed trees. We denote the random variable
    corresponding to the number of such
    occurences as \( X_1 \) which we will show to be asymptotically equivalent
    to \( X \).
    A tree containing a
    distinguished edge \( v \to w \) can be represented as a pair of trees with
    removed roots (which will be later filled by certain labels),
    see~\cref{fig:conflict:edge}. The corresponding EGF for digraphs with two
    distinguished trees containing edges \( v \to w \), \( \n w \to \n v \), and
    containing a distinguished pattern \( x \rightsquigarrow \n x
    \rightsquigarrow y \rightsquigarrow \n y \) weighted as \( 2 \) to the power
    of the length of the pattern, is then equal to
    \begin{equation}
        \Lambda^4 \left[
            \dfrac{1}{2!}
            \left(
                \dfrac{T_\to(z)}{z}
            \right)^4
        \dfrac{U_\to(z)^{2n-m-3}}{(2n-m-3)!}
        e^{V_\to(z)}
        \dfrac{8 T_\to(z)^4 z^{-4}}{(1 - 2 T_\to(z))^3}
        \right]
        ,
    \end{equation}
    where two of the applications of the operator \( \Lambda \) stand for the
    insertion of the label pairs \( v \) and \( \n v \), and \( w \) and \( \n w
    \) for the distinguished conflict edge pair, and another two applications
    for the insertion of complementary literals \( x \) and \( \n x \), and
    \( y \) and \( \n y \) in a distinguished pattern. The term
    \( 1/2 \cdot T_\to(z)^4 z^{-4} \) is an EGF of the set of two trees with a
    distinguished edge. The cardinality \( (2n-m) \) of the forest of trees is
    then reduced by \( 3 \).

    The expected value of \( X_1 \) is then expressed as
    \[
        \mathbb E X_1 =
        \dfrac{
            [z^{2n}]
            \Lambda^4 \left(
                \dfrac{1}{2!}
                \left(
                    \dfrac{T_\to(z)}{z}
                \right)^4
            \dfrac{U_\to(z)^{2n-m-3}}{(2n-m-3)!}
            e^{V_\to(z)}
            \dfrac{8 T_\to(z)^4 z^{-4}}{(1 - 2 T_\to(z))^3}
            \right)
        }{
            [z^{2n}]
            \Lambda^2 \left(
            \dfrac{U_\to(z)^{2n-m-1}}{(2n-m-1)!}
            e^{V_\to(z)}
            \dfrac{8 T_\to(z)^4 z^{-4}}{(1 - 2 T_\to(z))^3}
            \right)
        }
        .
    \]
    This expression can be readily analysed
    by~\cref{lemma:complementary:literal:insertion:operator}.
    Using the property that the exchange of the operator \( [z^{2n}] \)
    with \( \Lambda \) is done according to the rule
    \( [z^{2n}] \Lambda F(z) = \frac{1}{2n-1}[z^{2n}] z^2 F(z) \),
    we conclude that the unique multiples in the numerator of \( \mathbb E X_1 \),
    if carefully counted, appear as follows:
    \begin{enumerate*}[label=(\textit{\roman*})]
        \item a multiple asymptotically equivalent to \( (2n)^{-2} \) from the
            double application of \( \Lambda \) and its exchange with \(
            [z^{2n}] \);
        \item a multiple \( 1/2 \) because the two trees with distinguished
            conflict form a set;
        \item a multiple \( 2^{-4} \) appear from \( T_{\to}(z)^4 \);
        \item a multiple \( 4^2 \) appears from \( U_\to(z)^2 \);
        \item a multiple \( n^2 \) from the change in the factorial \(
            (2n-m-1)! \) of the EGF of forest;
        \item all the occurences of \( z \) cancel out.
    \end{enumerate*}
    Collecting the powers of two, we finally obtain
    \[
        \mathbb E X_1 \sim
        2^{-2-1-4+4} =
        \dfrac{1}{8}.
    \]
    So far, we have considered only one particular case, but most of the work
    has been actually already done.

    Without caring too much for the exact coefficients in the asymptotics, let us
    count the expected values of the following
    random variables:
    \begin{enumerate*}[label=(\textit{\roman*})]
        \item \( X_2 \) for conflicting pairs of edges which are located in the
            same directed tree;
        \item \( X_3 \) for conflicting pairs, one located in a tree, another
            located in a unicycle (or a complex component);
        \item \( X_4 \) for conflicting pairs, neither is located in the forest.
    \end{enumerate*}
    For \( \mathbb E X_2 \), for the first conflicting edge we need to fix a
    pair of rooted trees, and assuming without loss of generality (but with a
    loss of the exact multiplicative constant) that the second conflicting edge
    is inside the first tree, we obtain a path from this edge to the root.
    Therefore, the EGF for such objects is proportional, up to a constant, to
    \(
        \frac{1}{1 - 2 T_\to(z)} (T_\to(z) / z)^4
    \), which corresponds to four trees without roots and a sequence of trees,
    each term of the sequence equipped with a weight \( 2 \) corresponding to
    the choice of the orientation of the edge in the sequence. Plugging it into
    the EGF of counted graphs yields
    \begin{equation}
        \mathbb E X_2 \sim
        \dfrac{C_2}{\Theta(|\mu|^{-3})}
            [z^{2n}]
            \Lambda^4 \left(
                \dfrac{1}{1 - 2 T_\to(z)}
                \left(
                    \dfrac{T_\to(z)}{z}
                \right)^4
            \dfrac{U_\to(z)^{2n-m-2}}{(2n-m-2)!}
            e^{V_\to(z)}
            \dfrac{8 T_\to(z)^4 z^{-4}}{(1 - 2 T_\to(z))^3}
            \right)
        .
    \end{equation}
    We immediately notice that using fewer than two trees costs an asymptotic
    multiple of \( n \) because of the factorial in the denominator for the EGF
    of forest \( U_\to(z)^{2n-m} / (2n - m)! \), while we gain only a multiple
    \( n^{1/3} \) for an additional sequence constructor. Thus, \( \mathbb E
    X_2 \) is negligible by a factor \( n^{2/3} \).
    The same story happens again for \( X_3 \) and \( X_4 \) since in all these
    cases fewer than two trees are used, and for each lost multiple \( n \) it
    is possible to obtain at most two sequence constructions, contributing each
    \( n^{1/3} \).

    Again, the same principle is applicable for computing the higher moments:
    the most probable situation is to have all the conflicting edges separately
    in distinct trees (otherwise the contribution will be negligible by a factor
    at least \( n^{2/3} \)), and in this case,
    \[
        \dfrac{1}{k!} \mathbb E X (X - 1) \cdots (X - k + 1)
        \sim \dfrac{1}{8^k k!}
        ,
    \]
    and therefore, as \( n \to \infty \), \( \mathbb P(X = 0) \to \sum_{k \geq 0}
    \frac{(-1)^k}{8^k k!} = e^{-1/8} \).
\end{proof}

\begin{corollary}[Formulation of the counting technique]
\label{corollary:counting:technique}
Note that, by simple asymptotic analysis,
\(
\frac
    {|\mathcal D(2n,m)|}
    {|\mathcal D^\circ(2n,m)|}
    \to e^{1/8}
\).
Hence, by following~\cref{lemma:sd}, we obtain that
the asymptotic probabilities obtained
by counting the (weighted) patterns in \emph{simple digraphs},
without excluding
conflicting edges and without taking care of strict distinctness of the
paths, are equal to the asymptotic probabilities that sum-representation
digraphs contain such (weighted) patterns.
\end{corollary}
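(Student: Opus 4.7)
The plan is to bundle together the two reduction lemmas from the preceding subsection and observe that the two Poisson-type $e^{-1/8}$ factors cancel in the ratio defining the probability. Concretely, I will first establish the relation $|\mathcal D(2n,m)| / |\mathcal D^\circ(2n,m)| \to e^{1/8}$, and then do the same computation for the numerators (weighted pattern counts), after which a division finishes the corollary.

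For the first asymptotic, I would apply the inclusion--exclusion / Poisson argument used in the ``Excluding edge conflicts'' lemma but \emph{without} marking a pattern. Let $X$ count conflicting edge pairs in a uniformly random $D \in \mathcal D(2n,m)$. The $k$-th factorial moment $\tfrac{1}{k!}\mathbb E X(X-1)\cdots(X-k+1)$ counts, up to normalisation by $|\mathcal D(2n,m)|$, digraphs carrying a $k$-tuple of distinguished complementary edge pairs. Exactly the case analysis already carried out (all pairs in distinct trees is dominant; anything else loses an $n^{2/3}$ factor) gives $\mathbb E X(X-1)\cdots(X-k+1)/k! \to 1/(8^k k!)$. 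Summing the alternating series yields $\mathbb P(X=0) \to e^{-1/8}$, and since $\mathcal D^\circ(2n,m)$ is precisely $\{D \in \mathcal D(2n,m) : X(D) = 0\}$, the ratio $|\mathcal D(2n,m)|/|\mathcal D^\circ(2n,m)| \to e^{1/8}$ follows.

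For the numerators, denote by $N^{\mathrm{sim}}_{n,m}$ and $N^{\circ}_{n,m}$ the $2^\ell$-weighted counts of marked contradictory patterns in $\mathcal D(2n,m)$ and $\mathcal D^\circ(2n,m)$ respectively (with the strict distinctness condition imposed in the sum-representation count). To go from $N^{\mathrm{sim}}$ to $N^\circ$ one must impose (a) pairwise strict distinctness of interior literals on the distinguished paths, and (b) absence of conflicting edge pairs in the ambient digraph. Condition (a) is handled by Lemma~\ref{lemma:sd}: the proportion of pattern-marked digraphs failing strict distinctness is $O(n^{-1/3}|\mu|^{-2})$, hence negligible. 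Condition (b) is handled by rerunning the Poisson argument of step one \emph{inside} the pattern-marked world. The key observation is that introducing the distinguished pattern only affects finitely many vertices and edges, so the dominant contribution to each factorial moment still comes from conflicting pairs living in distinct trees of the ambient forest; these are asymptotically independent of the marked pattern, and the same computation yields limiting factorial moments $1/(8^k k!)$. Consequently $N^{\circ}_{n,m}/N^{\mathrm{sim}}_{n,m} \to e^{-1/8}$.

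Combining the two steps,
\[
\frac{N^\circ_{n,m}}{|\mathcal D^\circ(2n,m)|}
=\frac{N^{\mathrm{sim}}_{n,m}}{|\mathcal D(2n,m)|}\cdot\frac{N^\circ_{n,m}/N^{\mathrm{sim}}_{n,m}}{|\mathcal D^\circ(2n,m)|/|\mathcal D(2n,m)|}
=\frac{N^{\mathrm{sim}}_{n,m}}{|\mathcal D(2n,m)|}\cdot\bigl(1+o(1)\bigr),
\]
which is exactly the claim that asymptotic probabilities computed in the simple-digraph model coincide with those in the sum-representation model. The only mildly delicate step will be verifying that the Poisson argument of the ``Excluding edge conflicts'' lemma remains valid uniformly in the presence of a marked pattern of variable length $\ell$; this reduces to checking that inserting the pattern perturbs the EGFs used in that proof only by factors that affect subleading $(1+o(1))$ terms, which is immediate from the saddle-point Lemma~\ref{lemma:sp:digraphs} since the pattern-marking contributes a factor of the form $H(T_\to(z))/(1-2T_\to(z))^3$ that is analytic and already accounted for in the leading order $\Theta(|\mu|^{-3})$.
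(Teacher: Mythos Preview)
Your proposal is correct and is essentially the same argument the paper has in mind: the corollary is stated there as an immediate consequence of the two preceding lemmas together with the ratio $|\mathcal D(2n,m)|/|\mathcal D^\circ(2n,m)|\to e^{1/8}$, and your write-up simply makes the cancellation explicit via the identity
\[
\frac{N^\circ_{n,m}}{|\mathcal D^\circ(2n,m)|}
=\frac{N^{\mathrm{sim}}_{n,m}}{|\mathcal D(2n,m)|}\cdot\frac{N^\circ_{n,m}/N^{\mathrm{sim}}_{n,m}}{|\mathcal D^\circ(2n,m)|/|\mathcal D(2n,m)|}.
\]

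One small difference worth noting: for the ratio $|\mathcal D(2n,m)|/|\mathcal D^\circ(2n,m)|$ the paper appeals to ``simple asymptotic analysis'', meaning a direct Stirling-type estimate on the two binomial coefficients $\binom{2n(2n-1)}{m}$ and $2^m\binom{2n(n-1)}{m}$, whereas you re-derive the constant by rerunning the Poisson/inclusion--exclusion argument for conflicts in an \emph{unmarked} digraph. Both routes are legitimate; the direct estimate is shorter, while your route has the virtue of making transparent that the \emph{same} limiting constant governs both the denominator and the numerator (since the dominant conflict configurations live in the forest part, which is asymptotically unaffected by the marked pattern), so that cancellation is automatic regardless of the precise value of the constant. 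Be slightly careful with the line ``$\mathcal D^\circ(2n,m)$ is precisely $\{D:X(D)=0\}$'': the definition of $\mathcal D^\circ$ also forbids edges $x\to\bar x$, which your $X$ as stated may not detect; either fold these self-complementary edges into the definition of a conflict (as the paper does informally) or handle them by a second, independent Poisson factor---in any case it again cancels between numerator and denominator and does not affect the conclusion.
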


\begin{remark}
As can be seen from the proof, the same technique can be applied to the
situations when instead of the pattern \( x \rightsquigarrow \n x
\rightsquigarrow y \rightsquigarrow \n y \) a different pattern is
distinguished. This can be either
used for the case when the expectations of \( \xi_r \) are counted, or
when
further factorial moments of \( \xi \) are being considered. In all such
cases, the edge conflicts will most probably appear in the forest component,
and excluding such conflicts will always give a multiple \( e^{-1/8} \).
\end{remark}

\begin{remark}
\label{remark:breaking:conditions}
Using the same techniques and the inclusion-exclusion method, it is possible
to consider the 2-SAT model where some of the
conditions~\ref{condition:no:double:edges}--\ref{condition:no:multigraphs}
are violated. Accordingly, in such models the loops or multiple edges may appear.
The number of the counted graphs will then be multiplied by a
certain constant appearing from inclusion-exclusion or from a different symbolic
construction,
while the total number of graphs in the denominator will
be coincidentally multiplied by the same constant. Therefore, the
probability of satistifability and its asymptotic expansion will remain
unchanged under the different models.
\end{remark}

\subsection{The structure of contradictory components}
Let us recall that the contradictory component in the implcation digraph,
defined as the set of contradictory variables \( x \rightsquigarrow \n x
\rightsquigarrow x \), forms a set of strongly connected components,
see~\cref{remark:set:cscc}. The following theorem gives a description
of the contradictory component in the subcritical phase.
\begin{theorem}
    \label{theorem:satisfiability}
    Suppose that \( m = n(1 + \mu n^{-1/3}) \), and \( \mu \to -\infty \) with
    \( n \) while remaining \( |\mu| \leq n^{1/12} \). The contradictory
    component of an implication digraph corresponding to a random formula \( F
    \in \mathcal F(n,m) \) has an excess \( r \) with probability
    \begin{equation}
        \mathbb P(\text{excess of the contradictory component} = r)
        = C_r |\mu|^{-3r} (1 + O(|\mu|^{-3})).
    \end{equation}
    Moreover, for every finite \( r \), the kernel of this contradictory component
    is cubic with probability \( 1 - O(n^{-1/3}) \).
    The coefficient \( C_r \) is equal to the sum of \( \sum_M 2^{-r}
    \varkappa(M) / (2r)! \) taken over all possible labelled cubic contradictory
    components of excess \( r \).
\end{theorem}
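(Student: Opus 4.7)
The plan is to combine the inclusion-exclusion of~\cref{lemma:pie:sat} with the sum-representation technique of~\cref{subsection:simplest:cscc}, now extended from excess $1$ to arbitrary finite excess $r$, and to extract the leading asymptotic via the saddle point~\cref{lemma:sp:digraphs}. Writing $\xi = \sum_{r' \geq 1} \xi_{r'}$, for each fixed $r$ it suffices to establish $\mathbb E \xi_r = C_r |\mu|^{-3r}(1 + O(|\mu|^{-3}))$; the negligibility of $\mathbb E \xi_{r'}$ with $r' > r$ and of the higher factorial moments $\mathbb E \xi(\xi - 1)\cdots$ follows from the same weakly connected component analysis that appears after~\cref{lemma:contradictory:simple:digraphs}, each additional unit of excess or each additional independent contradictory pattern contributing a further factor of~$|\mu|^{-3}$.

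For the main term I would decompose $\mathbb E \xi_r = \sum_M \mathbb E \xi_{r,M}$ over the isomorphism classes $M$ of the reduced kernel of the minimal contradictory component, generalising the cubic-versus-non-cubic dichotomy performed in~\cref{subsection:simplest:cscc} for $r = 1$. A cubic contradictory implication digraph of excess $r$ has $4r$ literals and $6r$ oriented edges (since $3 \cdot 4r = 2 \cdot 6r$ and $6r - 4r = 2r$), and its sum-representation keeps the same $4r$ vertices but only $3r$ edges, one from each complementary pair. For a fixed labelled cubic~$M$ I would build the EGF of sum-representation digraphs carrying a distinguished pattern reducing to~$M$, weighted by $2^\ell$ for pattern length $\ell$, by (a)~substituting the weighted oriented tree-sequence $1/(1 - 2T_\to(z))$ into each of the $3r$ sum-representation edges of~$M$, (b)~placing a rooted directed tree $T_\to(z)$ at each of the $4r$ kernel vertices, (c)~stripping the labels of those vertices, and (d)~inserting the $2r$ pairs of complementary literals via $2r$ applications of the linking operator~$\Lambda$ of~\cref{lemma:complementary:literal:insertion:operator}. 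The resulting expression is schematically of the form
\[
    \dfrac{2^{-r}\, \varkappa(M)}{(2r)!}\,
    \Lambda^{2r}\!\left(
        \dfrac{U_\to(z)^{(2n - m) + 2r}}{((2n - m) + 2r)!}\,
        e^{V_\to(z)}\,
        \dfrac{H_M(T_\to(z))}{(1 - 2T_\to(z))^{3r}}
    \right),
\]
with $H_M$ a polynomial in $T_\to$ analytic at $T_\to = 1/2$, and with the outer prefactor $2^{-r}\varkappa(M)/(2r)!$ dictated by the isomorphism count of~\cref{lemma:compensation:factor:interpretation}. Applying~\cref{lemma:sp:digraphs} with $y = 3r$, after unfolding the $\Lambda$-operators as in the proof of~\cref{lemma:contradictory:simple:digraphs}, produces the factor $(n^{1/3}/|\mu|)^{3r}$; summing over all labelled cubic kernels then yields $C_r = \sum_M 2^{-r}\varkappa(M)/(2r)!$.

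The main obstacle will be bounding the non-cubic contribution. A non-cubic kernel of excess~$r$ is obtained from a cubic one by one or more edge contractions, each contraction forcing the corresponding tree-sequence to have length zero; since the typical length of such a tree-sequence is of order $n^{1/3}/|\mu|$, the probability of a fixed tree-sequence being empty is $O(n^{-1/3})$, and tracking this probability through the saddle point analysis and aggregating over the $O(1)$ many labelled cubic kernels of excess~$r$ yields the claimed bound. Finally, to transfer the resulting estimates from simple digraphs $\mathcal D(2n, m)$ to sum-representation digraphs $\mathcal D^\circ(2n, m)$ I would invoke~\cref{corollary:counting:technique} (which absorbs a common $e^{1/8}$ factor) and~\cref{lemma:sd} (which bounds the failure of strict distinctness on path interiors by $O(n^{-1/3}|\mu|^{-2})$).
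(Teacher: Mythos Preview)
Your plan follows the same route as the paper: inclusion--exclusion on minimal contradictory components, passage to a sum-representation $\pi$ of the kernel, substitution of weighted tree-sequences into the edges of $\pi$, repeated application of the linking operator $\Lambda$, and extraction via \cref{lemma:sp:digraphs}. The paper carries one extra layer of bookkeeping: it parametrises an arbitrary $\pi$ by $(\tau,\nu,\varphi)$ because choosing one edge from each complementary pair can leave some kernel literals isolated. Your specialisation $\varphi=0$, $\nu=2r$, $\tau=3r$ is fine for the cubic leading term once you observe that a $\pi$ with no isolated vertex always exists (for each complementary vertex pair, split the three incident edge-pairs nontrivially between the two literals).

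There is, however, one concrete slip in your schematic EGF: the forest exponent $(2n-m)+2r$ is wrong. The sum-representation pattern has $4r$ vertices and $3r$ edges, hence excess $-r$, so the surrounding forest contains $(2n-m)-r$ trees; compare $r=1$, where \cref{lemma:contradictory:simple:digraphs} has exponent $2n-m-1$. With your exponent the powers of $n$ would fail to cancel after the saddle-point step. A smaller point: the factor $2^{-r}$ in your prefactor is not produced by \cref{lemma:compensation:factor:interpretation} but by the saddle-point evaluation itself (the combination of $2^{\tau}$ from edge weights, $T_\to^{4r}\to 2^{-4r}$, and the $U_\to$/$\Lambda$ contributions collapses to $2^{-r}$, exactly as in the paper's displayed asymptotic $n^{\tau/3-r}|\mu|^{-\tau}\,2^{-r}/\varkappa(\pi)$); the lemma supplies only the $\varkappa(M)/(2r)!$ part of the prefactor.
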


\begin{proof}
    Let \( \xi_r \) denote the random variable which equals to the number of
    contradictory components of excess \( r \) (each component is not
    necessarily connected and the different components may possibly overlap).
    Using a manipulation with formal power series that can be interpreted as a
    variation of the inclusion-exclusion principle, we can express the
    probability that \( \xi_r \) equals \( 1 \).

    Let \( F(x) := \sum_{k \geq 0} \mathbb P(\xi_r = k) x^k \) be the
    probability generating function (PGF) of \( \xi_r \). Then,
    \( F^{(k)}(z) = \mathbb E \xi_r (\xi_r-1)\cdots (\xi_r - k + 1) \).
    Using Taylor series expansion at \( x = 1 \), provided that \( F(x) \) is
    analytic in a circle of radius greater than \( 1 \), we obtain
    \begin{equation}
        F(x) = \sum_{k \geq 0}
        \dfrac{\mathbb E \xi_r(\xi_r-1) \cdots (\xi_r-k+1)}
        {k!} (x-1)^k
        .
    \end{equation}
    Using the fact that \( \mathbb P(\xi_r = 1) = F'(0) \), we obtain the
    expression
    \begin{equation}
        \mathbb P(\xi_r = 1)
        =
        \dfrac{
            \mathbb E \xi_r
        }{0!}
        -
        \dfrac{
            \mathbb E \xi_r (\xi_r - 1)
        }{1!}
        +
        \dfrac{
            \mathbb E \xi_r (\xi_r - 1) (\xi_r - 2)
        }{2!}
        - \cdots .
    \end{equation}
    Here, \( \mathbb E \xi_r \) can be rewritten as the number of implication
    digraphs with a distinguished contradictory component of excess \( r \),
    \( \mathbb E \xi_r(\xi_r - 1) \) is the number of implication digraphs with
    a distinguished pair of contradictory components, etc. The distinguished
    pair of contradictory components forms a contradictory component by itself,
    of excess at least \( (r + 1) \), as \eg in~\cref{example:overlap}.

    Let us choose a contradictory component \( \mathcal C \) of excess \( r \).
    We choose a \emph{contradictory pattern} \( \pi \) which is chosen by
    taking an arbitrary sum-representation of the kernel of \( \mathcal C \), so
    that no two complementary edges are chosen.
    Assume that all the isolated vertices are not included into the pattern, so
    that there might appear distinguished vertices that do not have their
    complementaries in \( \pi \).
    Finally, every edge of \( \pi \) is replaced by a sequence of directed trees
    to obtain a directed
    weakly connected component \( \mathcal P \).
    By combining the reasoning of~\cref{subsection:simplest:cscc} and using
    \cref{lemma:compensation:factor:interpretation}, we conclude that
    \( \mathbb E \xi_r \) is asymptotically worth the number of all
    sum-representation digraphs \( \mathcal D^\circ(2n, m) \) containing \(
    \mathcal P \), counted with weight \( 2^\ell \), where \( \ell
    \) is the number of edges of \( \pi \), and divided by the compensation
    factor of \( \mathcal C \).
    Note that such a weakly connected component \( \mathcal P \) may be a part
    of a larger weakly connected component in a sum-representation digraph, so
    it is required to take the sum over all possible weakly connected components
    conatining \( \mathcal P \).

    Let us define the following quantities:
    \begin{enumerate*}[label=(\textit{\roman*})]
        \item \( \tau(\pi) \) equal to the number of directed edges of \( \pi
            \);
        \item \( \nu(\pi) \) equal to the number of pairs of complementary
            variables in \( \pi \);
        \item \( \varphi(\pi) \) equal to the number of literals that do not
            have their complementaries in \( \pi \).
    \end{enumerate*}
    Then, the number of Boolean variables in \( \mathcal C \) equals \( \nu(\pi)
    + \varphi(\pi) \), and the excess \( r \) then equals
    \( r = \tau(\pi) - \varphi(\pi) - \nu(\pi) \). At the same time, the excess
    of the unoriented projection of \( \pi \) (in the sense of simple graphs) is
    equal to \( \tau(\pi) - \varphi(\pi) - 2 \nu(\pi) = r - \nu(\pi) \).

    The EGF \( f_{\mathcal P}(z) \) for digraphs \( D \in \mathcal D(2n,m) \)
    containing \( \mathcal P \) as a separate weakly connected components, can
    be expressed as
    \begin{equation}
        f_{\mathcal P}(z)
        =
        \dfrac{1}{\varkappa(\pi)}
        \Lambda^\nu
        \left(
            \dfrac{U_\to(z)^{2m-n + (r - \nu(\pi))}}
                {(2m-n + r - \nu(\pi))!}
            e^{V_\to(z)}
            \dfrac{ (T_\to(z))^{\varphi(\pi) + 2 \nu(\pi)}
            z^{-2 \nu(\pi)} 2^{\tau(\pi)}}
            {(1 - 2 T_\to(z))^\tau}
        \right) .
    \end{equation}
    In the case above, each of the literals of \( \pi \) is coloured into a
    separate distinguished colour, and only the compensation factor of \( \pi \)
    is considered.

    By analysing the asymptotics of \(
    \dfrac{(2n)!}{\mathcal D(2n,m)}
    [z^{2n}] f_{\mathcal P}(z)
    \)
    similarly to~\cref{lemma:contradictory:simple:digraphs}, we obtain
    \[
        \dfrac{(2n)!}{\mathcal D(2n,m)}
        [z^{2n}] f_{\mathcal P}(z)
        \sim
        \dfrac{1}{\varkappa(\pi)}
        \dfrac{1}{(2n)^{\nu(\pi)}}
        4^{\nu(\pi) - r}
        n^{\nu(\pi)-r}
        2^{-\varphi(\pi) - 2 \nu(\pi)}
        2^{\tau(\pi)}
        n^{\tau(\pi)/3}
        |\mu|^{-\tau(\pi)}
        .
    \]
    By using the relation \( r = \tau(\pi) - \varphi(\pi) - \nu(\pi) \), we
    obtain
    \[
        \dfrac{(2n)!}{\mathcal D(2n,m)}
        [z^{2n}] f_{\mathcal P}(z)
        \sim
        n^{\tau(\pi)/3 - r}
        |\mu|^{-\tau(\pi)}
        \dfrac{1}{\varkappa(\pi)}
        2^{-r}
        .
    \]
    If the kernel of the component \( \mathcal C \) is not a cubic multigraph,
    then \( r > \tau(\pi) / 3 \), and the contributions of such terms are
    negligible.  Otherwise, \( \tau(\pi) = 3 r \).
    In the case when \( \mathcal P \) is a part of a larger weakly connected
    component of higher excess, this results of asymptotic of order \(
    |\mu|^{-3r - 3} \) which is negligible compared to \( |\mu|^{-3r} \).

    Finally, let us obtain the asymptotics of \( \mathbb E \xi_r \).
    By~\cref{corollary:counting:technique}, enumeration inside simple digraphs
    gives the same asymptotics as the probability in sum-representations.

    The
    expected value \( \mathbb E \xi_r \) can be obtained by adding up the
    contributions of all possible contradictory components \( \mathcal C
    \) of excess \( r \) whose kernels are cubic.
    We denote such a contribution by \( \mathbb E
    \xi_{\mathcal C} \), where \( \xi_{\mathcal C} \) is the corresponding
    random variable. Then, recalling the reasoning
    from~\cref{subsection:simplest:cscc}, and by choosing a corresponding
    sum-representation \( \pi \) of the kernel of \( \mathcal C \),
    by using~\cref{lemma:compensation:factor:interpretation} and the fact
    that a cubic multigraph of excess \( r \) contains \( 2r \) vertices,
    we express \( \mathbb E \xi_{\mathcal C} \) as
\begin{align}
    \mathbb E \xi_{\mathcal C} &\sim
    \dfrac{\varkappa(\mathcal C)}{ (2r)! \varkappa(\pi) }
    \dfrac{
        \sum_{\ell = 0}^\infty
        2^\ell
        \left|\left\{
            (G, p_\ell) \ | \
            \substack{
                G \in \mathcal D^\circ(2n,m), \
                p_\ell \subset G \text{ is obtained from \( \pi \) by}
            \\
                \text{inserting sequences of
                trees of total length \( \ell \)
            }}
        \right\}\right|
    }{
        \big|\{
            G \mid
            G \in \mathcal D^\circ(2n,m)
        \}\big|
    }
    .
\end{align}
Taking the sum over all such \( \mathcal C \)
we obtain the dominant contribution of \( \mathbb E \xi_r \).
Further terms of the inclusion-exclusion for \( \mathbb P(\xi_r = 1) \) have
order at most \( |\mu|^{-3r-3} \) and are, therefore, negligible by a factor \(
|\mu|^3 \).
Collecting the dominant contributions, we conclude that
\begin{equation}
    \mathbb P(\xi_r = 1)
    \sim
    \ \
    \sum_{\mathclap{\substack{\mathcal C \text{ of excess }r\\
    \text{with cubic}\\ \text{kernels}}}}
    \ \
    \mathbb E \xi_{\mathcal C} \sim
    \sum_{\mathcal C}
    \dfrac{\varkappa(\mathcal C)}{2^r (2r)!}
    |\mu|^{-3r}
    .
\end{equation}
\end{proof}

We present a different proof of a theorem from~\cite{kim2008finding} using the
compensation factors of the contradictory components which comes as a corollary
of the above theorem.
\begin{corollary}
    For a random formula \( F \in \mathcal F(n,m) \), when \( m = n(1 + \mu
    n^{-1/3}) \) and \( \mu \to -\infty \) with \( n \)
    while remaining \( |\mu| \leq n^{1/12} \),
    \begin{equation}
        \mathbb P(F \text{ is satisfiable}) =
        \left(1 - \dfrac{1}{16 |\mu|^3}
        \right) (1 + O(\mu n^{-1/3} + \mu^{-3}))
        .
    \end{equation}
\end{corollary}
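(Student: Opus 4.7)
The plan is to apply Theorem~\ref{theorem:satisfiability} with $r = 1$, together with the observation that a formula $F$ is satisfiable if and only if its contradictory component has excess zero (equivalently, is empty), which is immediate from Definition~\ref{definition:contradictions}.

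First I would decompose
\[
\mathbb{P}(F \text{ is SAT}) = 1 - \sum_{r \geq 1} \mathbb{P}(\text{excess of the contradictory component} = r),
\]
a disjoint-event decomposition since the excess is a well-defined non-negative integer. Applying Theorem~\ref{theorem:satisfiability} term by term gives $\mathbb{P}(\text{excess} = r) = C_r\,|\mu|^{-3r}(1 + O(|\mu|^{-3}))$, so the tail $\sum_{r \geq 2} \mathbb{P}(\text{excess}=r)$ is of order $O(|\mu|^{-6})$ and can be absorbed into the multiplicative error.

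Next I would compute $C_1$ explicitly. By the formula in the theorem, $C_1 = \sum_M 2^{-1}\varkappa(M)/2!$ summed over labelled cubic contradictory kernels of excess $1$. A cubic contradictory kernel of excess $r$ has $4r$ directed vertices (every vertex has total in$+$out degree $3$, so $3 \cdot 4r = 2 \cdot \text{edges}$, giving excess $(6r - 4r)/2 = r$); for $r = 1$ there are exactly two Boolean variables and, up to Boolean variable relabeling, a unique such kernel, namely the one shown in Figure~\ref{fig:cscc:excess:one}, with doubled edges $x \to \n x$ and $y \to \n y$ joined by the single edges $\n x \to y$ and $\n y \to x$. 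The degenerate case of Figure~\ref{fig:cscc:excess:one:second} has every vertex of total degree $4$ and is therefore not cubic, hence excluded by the cubic-kernel clause of the theorem. The example after Lemma~\ref{lemma:compensation:factor:interpretation} computes the compensation factor $\varkappa = 1/(2!!)^2 = 1/4$, whence
\[
C_1 = \tfrac{1}{2} \cdot \tfrac{1/4}{2!} = \tfrac{1}{16}.
\]

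Combining, I obtain
\[
\mathbb{P}(F \text{ is SAT}) = 1 - \frac{1}{16|\mu|^3} + O(|\mu|^{-6}) = \left(1 - \frac{1}{16|\mu|^3}\right)\bigl(1 + O(|\mu|^{-3})\bigr),
\]
and the additional $O(\mu n^{-1/3})$ in the stated error absorbs the finite-$n$ corrections (from approximations such as $2n - m \sim n$ and $m/n \sim 1$ inside Lemma~\ref{lemma:sp:digraphs}, as well as the $O(n^{-1/3})$ probability of a non-cubic kernel quantified in Theorem~\ref{theorem:satisfiability}). The main obstacle is enumerating the cubic contradictory kernels of excess $1$ with the correct interpretation of the labelling convention underlying $\sum_M \varkappa(M)/(2^r (2r)!)$; once the unique relevant kernel and its compensation factor $1/4$ are identified, the rest is routine bookkeeping.
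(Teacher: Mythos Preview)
Your proposal is correct and follows essentially the same route as the paper: identify the unique cubic contradictory kernel of excess~$1$ (Figure~\ref{fig:cscc:excess:one}), read off its compensation factor $\varkappa = 1/4$, plug into the formula $C_r = \sum_M 2^{-r}\varkappa(M)/(2r)!$ from Theorem~\ref{theorem:satisfiability} to obtain $C_1 = 1/16$, and absorb all higher-excess contributions into the $O(|\mu|^{-3})$ error. Your explicit exclusion of the non-cubic kernel of Figure~\ref{fig:cscc:excess:one:second} and your bookkeeping of the $O(\mu n^{-1/3})$ finite-$n$ corrections are slightly more detailed than the paper's own proof, but the argument is the same.
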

\begin{proof}
    {\setstretch{1.1}
    In the subcritical phase,
    the compensation factor of the only possible cubic contradictory implication
    multidigraph of excess \( 1 \)
    (viz.~\cref{fig:cscc:excess:one}) is equal to \( 1/4 \).
    Therefore,
    the probability of having a contradictory component of excess \( 1 \) is \(
    \frac{2^{-2}}{2!} \cdot \frac{1}{4} |\mu|^{-3} = \frac{1}{16} |\mu|^{-3}
    \). The probability of having a contradictory component of higher excess is
    then \( \Theta(|\mu|^{-6}) \), and so, is negligible.\par
}
\end{proof}

\subsection{Number of contradictory variables}

\begin{theorem}
    \label{theorem:contradictory:variables}
    Let \( m = n(1 + \mu n^{-1/3}) \), \( \mu \to -\infty \), \( |\mu| \leq
    n^{1/12} \).
    Assuming that the excess of the contradictory component is \( r \), and this
    component has a cubic kernel, the number of contradictory variables
    \( V_n \) in a random formula \( F \in \mathcal F(n,m) \) follows
    asymptotically a Gamma law with shape parameter \( 3r \) and scale
    parameter \( n^{1/3} |\mu|^{-1} \), so that
    \begin{equation}
        \lim_{n \to \infty}
        \mathbb P\Big(V_n = x n^{1/3} |\mu|^{-1}
        \ \Big| \ \text{excess } = r\Big)
        =
        \dfrac{x^{3r-1}}{\Gamma(3r)} e^{-x}
        .
    \end{equation}
\end{theorem}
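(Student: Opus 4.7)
The plan is to compute the conditional Laplace transform $\mathbb{E}[u^{V_n}\mid E_r]$ via a bivariate generating function and a saddle-point analysis, then identify the limit as the Laplace transform of $\Gamma(3r,1)$. By~\cref{theorem:satisfiability}, conditioned on excess $r$, the contradictory component has a cubic kernel (with $2r$ variables) with probability $1-O(n^{-1/3})$, and in any sum-representation this kernel has $3r$ directed edges. Under the construction in the proof of~\cref{theorem:satisfiability}, the 2-core of the contradictory component is obtained from the kernel by replacing each of these edges with a sequence of directed rooted trees; the tree branches hanging off the sequence main-path roots (counted by the factor $T_\to^{4r}$) belong to the weakly connected component $\mathcal P$ but are not themselves contradictory, since in the implication digraph their reversals have vanishing in-degree at some point. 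Hence the only non-kernel contradictory variables are the $\ell_i$ main-path roots of the $i$-th sequence, and $V_n = 2r + \sum_{i=1}^{3r}\ell_i$.

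Introducing a marker $u$ for each contradictory variable, implemented by the substitutions $\Lambda\mapsto u\Lambda$ (one $u$ per inserted complementary pair of kernel literals) and $(1-2T_\to(z))^{-1}\mapsto(1-2uT_\to(z))^{-1}$ (one $u$ per main-path root), one obtains the bivariate generating function
\[
f_{\mathcal P}(z,u) = \frac{u^{2r}}{\varkappa(\pi)}\,\Lambda^{2r}\!\left(\frac{U_\to(z)^{2n-m-r}}{(2n-m-r)!}\,e^{V_\to(z)}\,\frac{T_\to(z)^{4r}\,z^{-4r}\,2^{3r}}{(1-2uT_\to(z))^{3r}}\right),
\]
so that $\mathbb{E}[u^{V_n}\mid E_r] = \sum_{\mathcal P}[z^{2n}]f_{\mathcal P}(z,u)/\sum_{\mathcal P}[z^{2n}]f_{\mathcal P}(z,1)$. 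Setting $u = e^{-s|\mu|/n^{1/3}}$ for fixed $s>0$, the saddle $z_s$ of the dominant factor $U_\to(z)^{2n-m}/z^{2n}$ satisfies $T_\to(z_s) = m/(2n) = 1/2+\mu/(2n^{1/3})$, and hence $1-2T_\to(z_s) = |\mu|/n^{1/3}$. Evaluating the $u$-dependent singular factor at $z_s$,
\[
1-2uT_\to(z_s) = (1-u) + u\,(1-2T_\to(z_s)) \sim (s+1)\,|\mu|/n^{1/3},
\]
so the ratio of the singular contributions in numerator and denominator equals $(1+s)^{-3r}$. An extension of~\cref{lemma:sp:digraphs} to this bivariate integrand (the $u$-perturbation displaces the saddle only by a subleading amount) then yields $\mathbb{E}[u^{V_n}\mid E_r]\to(1+s)^{-3r}$, the Laplace transform of $\Gamma(3r,1)$; by L\'evy's continuity theorem $V_n|\mu|/n^{1/3}$ converges in distribution to a random variable with density $x^{3r-1}e^{-x}/\Gamma(3r)$, which is the statement of the theorem.

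The main obstacle is the saddle-point analysis with a parameter-dependent singularity: one must rigorously track the displacement of the saddle under the $u$-perturbation and verify that the contour integration delivers exactly the factor $(1+s)^{-3r}$ without additional corrections, uniformly for $u=e^{-s|\mu|/n^{1/3}}$ in a neighbourhood of $1$. This can be achieved by adapting the complete saddle-point expansion sketched in~\cref{section:full:asymptotic:expansion} so as to treat $T_\to(z)$ and $uT_\to(z)$ with their distinct singular behaviours on the same contour.
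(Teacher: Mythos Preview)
Your proposal and the paper's proof start from the same bivariate generating function
\[
F_{\mathcal P}(z,u)\;\propto\;\Lambda^{2r}\!\left(\frac{U_\to(z)^{2n-m-r}}{(2n-m-r)!}\,e^{V_\to(z)}\,\frac{T_\to(z)^{4r}z^{-4r}2^{3r}}{(1-2uT_\to(z))^{3r}}\right),
\]
but diverge in how the limit law is read off. The paper computes the \emph{factorial moments}: differentiating $k$ times in $u$ and setting $u=1$ simply raises the exponent of $(1-2T_\to(z))^{-1}$ from $3r$ to $3r+k$, so each moment is obtained by a direct application of \cref{lemma:sp:digraphs} with $y=3r+k$, yielding $\mathbb E[V_n^{\underline k}\mid\mathcal P]\sim\Gamma(3r+k)/\Gamma(3r)\cdot(n^{1/3}|\mu|^{-1})^k$; Carleman's condition then identifies the limit as $\Gamma(3r)$. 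You instead substitute $u=e^{-s|\mu|/n^{1/3}}$ and evaluate the singular factor at the saddle to obtain the Laplace transform $(1+s)^{-3r}$ and conclude via L\'evy's continuity theorem.

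Both routes are valid, but the paper's is technically lighter: each moment computation fits the hypothesis of the existing saddle-point lemma verbatim, whereas your approach requires the extension you flag at the end (a parameter-dependent singularity with $u$ approaching $1$ at rate $|\mu|/n^{1/3}$), which is exactly the kind of uniformity estimate the moment method sidesteps. On the other hand, your argument supplies something the paper glosses over: the justification that the contradictory variables are precisely the kernel variables plus the main-path roots of the $3r$ sequences (the pendant tree branches being non-contradictory). The paper simply places $u$ in the sequence factor without comment; your paragraph explaining why this is the right marking is a genuine addition.
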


\begin{proof}
    Fix a contradictory component \( \mathcal C \) of excess \( r \) with a
    cubic kernel.
    Construct a contradictory pattern \( \pi \) by taking an arbitrary
    sum-representation of the kernel of \( \mathcal C \), and replace every
    oriented edge of \( \pi \) by a sequence of directed trees, thus obtaining a
    digraph \( \mathcal P \). Consider an EGF
    \( F_{\mathcal P}(z, u) \)
    for directed graphs with a distinguished weakly connected component \(
    \mathcal P \) counted with weight \( 2^\ell \) where \( \ell \) denotes the
    length of the \( 2 \)-core of \( \mathcal P \). The variable \( u \) then
    marks all the contradictory variables on \( \mathcal C \). Then,
    using the similar constructions as in~\cref{theorem:satisfiability}, we
    express \( F_{\mathcal P}(z, u) \):
    \begin{equation}
        F_{\mathcal P}(z, u)
        =
        \dfrac{1}{\varkappa(\pi)}
        \Lambda^{2r}
        \left(
            \dfrac{U_\to(z)^{2m-n - r}}
                {(2m-n - r)!}
            e^{V_\to(z)}
            \dfrac{ (T_\to(z))^{4r}
            z^{-4r} 2^{3r}}
            {(1 - 2 u T_\to(z))^{3r}}
        \right) .
    \end{equation}
    The expected value of the number of contradictory variables conditioned on
    this pattern \( \mathcal P \) is then
    \begin{equation}
        \mathbb E [V_n \mid \mathcal P] =
        \dfrac{\partial_u[z^{2n}] F_{\mathcal P}(z, u)|_{u = 1}}
        {[z^{2n}] F_{\mathcal P}(z, 1)}
        \sim
        3r \cdot n^{1/3} |\mu|^{-1},
    \end{equation}
    and more generally, \( k \)-th factorial moment can be expressed as
    \begin{equation}
        \mathbb E [V_n \cdots (V_n-k+1) \mid \mathcal P] =
        \dfrac{\partial_u^k [z^{2n}] F_{\mathcal P}(z, u)|_{u = 1}}
        {[z^{2n}] F_{\mathcal P}(z, 1)}
        \sim
        \dfrac{\Gamma(3r+k)}{\Gamma(3r)!}
        (n^{1/3} |\mu|^{-1})^k
        .
    \end{equation}
    Note that the resulting factorial moments do not depend on the choice of \(
    \mathcal C \), therefore, these are also the asymptotic moments of the
    unconditioned variable \( V_n \).

The sequence of moments of the scaled random variable \( \widehat{V}_n := V_n
n^{-1/3} |\mu| \) coincides with the sequence of moments of the Gamma
distribution with
    shape parameter \( 3r \): if its density is \( f(x) = x^{3r} e^{-x} /
    \Gamma(3r) \), then \( k \)-th moment is calculated as
    \begin{equation}
        \int_0^\infty
        \dfrac{x^{3r + k - 1}}{\Gamma(3r)} e^{-x}
        dx
        =
        \dfrac{\Gamma(3r+k)}{\Gamma(3r)!}.
    \end{equation}
    By checking Carleman's condition for Stieltjes moment problem on \( (0,
    +\infty) \), we conclude that this distribution is uniquely defined by its
    moments, which finishes the proof.
\end{proof}

\begin{corollary}
    Since in the subcritical phase a random 2-CNF has a contradictory component
    of excess \( 1 \) with probability \( \frac{1}{16 |\mu|^{3}} \),
    and components of
    higher excess with a negligible probability, the distribution of the number
    of contradictory variables can be approximated by a mixture of
    a deterministic value \( 0 \) with probability
    \( \mathbb P(V = 0) = 1 - \frac{1}{16 |\mu|^{3}} \)
    and
    of Gamma distribution with parameter \( 3 \) and scale
    \( n^{1/3} |\mu|^{-1} \) with probability \( \frac{1}{16 |\mu|^{3}} \).
\end{corollary}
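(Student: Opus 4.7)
The plan is to decompose the distribution of the number of contradictory variables $V_n$ according to the excess $r$ of the contradictory component, and then plug in the two preceding results. I would start by writing the law of total probability
\[
\mathbb P(V_n \in A) = \sum_{r \geq 0} \mathbb P(V_n \in A \mid \text{excess} = r)\,\mathbb P(\text{excess} = r),
\]
and observing that the case $r = 0$ corresponds exactly to an empty contradictory component, i.e.\ a satisfiable formula, in which case $V_n = 0$ deterministically. By the corollary following \cref{theorem:satisfiability}, this event has probability $1 - \frac{1}{16|\mu|^{3}}(1 + O(\mu n^{-1/3} + \mu^{-3}))$, which produces the atom at $0$ of the claimed mixture.

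Next I would control the contributions of higher excesses using \cref{theorem:satisfiability}: the probability of excess exactly $r$ is $C_r|\mu|^{-3r}(1 + O(|\mu|^{-3}))$. Thus every term with $r \geq 2$ contributes a mass of order at most $|\mu|^{-6}$, and their sum is still $O(|\mu|^{-6})$ in the subcritical regime, hence negligible compared to the $|\mu|^{-3}$ weight carried by the $r = 1$ piece. It therefore suffices to describe the conditional distribution of $V_n$ given $\{\text{excess} = 1\}$, which is exactly the content of \cref{theorem:contradictory:variables} applied at $r = 1$: the conditional law of $V_n$ is asymptotically Gamma with shape $3$ and scale $n^{1/3}|\mu|^{-1}$. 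Substituting these three ingredients into the total-probability decomposition yields precisely the asserted mixture.

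The main (mild) obstacle is a bookkeeping step: confirming that the aggregated $O(|\mu|^{-6})$ mass carried by contradictory components of excess $r \geq 2$ genuinely gets absorbed into the error term without distorting either the atomic part at $0$ or the shape of the Gamma component on the scale $n^{1/3}|\mu|^{-1}$. This reduces to noting that, since the scaled conditional variable $V_n n^{-1/3}|\mu|$ has bounded moments of every order under each conditioning, the contribution of the higher-excess cases to any fixed moment or interval probability is dominated by $\sum_{r \geq 2} C_r|\mu|^{-3r} = O(|\mu|^{-6})$, which is strictly smaller than the Gamma weight $\frac{1}{16}|\mu|^{-3}$ and can be folded into the approximation error.
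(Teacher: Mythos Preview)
Your proposal is correct and matches the paper's approach: the paper provides no explicit proof for this corollary, treating it as an immediate consequence of \cref{theorem:satisfiability} (giving the $\frac{1}{16|\mu|^3}$ probability for excess $1$ and negligible probability for higher excess) together with \cref{theorem:contradictory:variables} at $r=1$ (giving the conditional Gamma$(3)$ law), which is exactly the total-probability decomposition you spell out.
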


\subsection{Structure of the spine}
In the paper~\cite{bollobas2001scaling} it is proven that the expected size of
the spine in the subcritical phase, \ie for \( m = n(1 + \mu n^{-1/3}) \) when
\( \mu \to -\infty \) with \( n \), is asymptotically \( \frac12 |\mu|^{-2}
n^{2/3} \).
As proven in~\cref{lemma:minimal:spinal:path}, for every literal \( y \) from
the spine of a formula \( F \in \mathcal F(n,m) \) there exists a minimal spinal
path of the form \( y \rightsquigarrow x \rightsquigarrow \n x \), such that all
the internal nodes of the paths \( y \rightsquigarrow x \) and \( x
\rightsquigarrow \n x\) are pairwise strictly distinct. We show that for almost
all literals \( y \) from the spine such a minimal spinal path is unique.

\begin{theorem}
    \label{theorem:spine}
    Consider random formulae \( F \in \mathcal F(n,m) \) in the subcritical phase,
    \( m = n(1 + \mu n^{-1/3}) \), \( \mu \to -\infty \), \( |\mu| \leq n^{1/12}
    \).
    The expected number of spine variables \( y \) that have exactly \( k \)
    unique paths from \( y \) to \( \n y \) is asymptotically equal to \( C_k
    n^{2/3} |\mu|^{-2-3k} \), where \( C_k \) is some algorithmically computable
    constant.
    In particular, the expected proportion of spine variables \( y \) having a
    unique path \( y \rightsquigarrow \n y \) is
    \( 1 - O(|\mu|^{-3}) \).
\end{theorem}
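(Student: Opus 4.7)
The plan is to extend the inclusion--exclusion methodology developed in the proofs of \cref{theorem:satisfiability} and \cref{lemma:pie:sat} from the contradictory setting to the spinal setting. Let $M(y)$ denote the number of distinct minimal spinal paths through a literal $y$ as described by \cref{lemma:minimal:spinal:path}, and let $N_k := \mathbb{E}[\#\{y : M(y) = k+1\}]$, interpreting the index $k$ as the number of extra minimal spinal paths beyond the canonical one. By M\"obius inversion applied to the probability generating function of $M(y)$ near $u=1$, one has $N_k = \sum_{j \geq k+1} (-1)^{j-k-1} \binom{j-1}{k} T_j$, where $T_j := \mathbb{E} \sum_y \binom{M(y)}{j}$ counts pairs $(y, \{P_1,\ldots,P_j\})$ of a literal together with an unordered $j$-tuple of distinct minimal spinal paths through it. The problem thus reduces to estimating the falling factorial moments $T_j$ for each $j \geq 1$.

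Following the sum-representation framework of \cref{section:sum:representation,subsection:simplest:cscc}, each such pair $(y, \{P_1,\ldots,P_j\})$ in the implication digraph corresponds, up to a fixed combinatorial multiplicity, to a distinguished pattern $\pi$ in a sum-representation digraph with every edge of $\pi$ weighted by $2$. I classify each pattern by three parameters: the number $\nu$ of complementary literal pairs $(x_i, \n{x_i})$ in its kernel, the number $\varphi$ of non-complementary distinguished vertices (including $y$ itself and any internal branching vertices), and the number $\tau$ of sequence-edges of the kernel. For each topological configuration, the corresponding EGF takes the form
\begin{equation*}
f_\pi(z) = \frac{1}{\varkappa(\pi)} \eLink^{\nu}\left[\frac{U_\to(z)^{2n-m-s}}{(2n-m-s)!}\, e^{V_\to(z)} \cdot \frac{T_\to(z)^{\varphi + 2\nu}\, 2^\tau}{z^{2\nu}(1-2T_\to(z))^{\tau}}\right],
\end{equation*}
in direct analogy with the formula used in the proof of \cref{theorem:satisfiability}.

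Applying \cref{lemma:sp:digraphs} exactly as in that proof, each configuration contributes an asymptotic of order $n^{\tau/3 - r}\,|\mu|^{-\tau}$, where $r := \tau - \nu - \varphi$. The target asymptotic $n^{2/3}\,|\mu|^{-(3j-1)}$ forces $\tau = 3j-1$ and $r = j-1$; combined with the tree-kernel relation $\tau = 2\nu + \varphi - 1$ (equivalent to undirected kernel excess $-1$), these constraints yield $\nu = \varphi = j$. The dominant configurations are therefore the \emph{caterpillar}-type patterns: a directed tree kernel rooted at $y$, with $j-1$ internal branching vertices and $j$ terminal loop-chains $x_i \rightsquigarrow \n{x_i}$. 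Non-caterpillar configurations, such as the \emph{theta} of \cref{subsection:simplest:cscc} ($\tau=3$, $r=1$), the \emph{spider} of $j$ disjoint chains ($\tau=2j$, $r=j-1$), or any configuration whose kernel contains a cycle, produce either a smaller $n$-exponent or a more negative $|\mu|$-exponent, so contribute only to the relative $O(|\mu|^{-3})$ correction. Summing over caterpillar configurations weighted by $\varkappa(\pi)^{-1}$ yields $T_j \sim C'_{j-1}\, n^{2/3}\,|\mu|^{-(3j-1)}$, and substitution into the M\"obius inversion gives $N_k \sim C_k\, n^{2/3}\,|\mu|^{-2-3k}$ with $C_k$ algorithmically computable; the specialisation $k=0$ yields the claimed $1 - O(|\mu|^{-3})$ proportion. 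The main technical obstacle is the case analysis step that identifies the caterpillar-type configurations as the unique dominant class, which proceeds by the same excess-versus-edges argument that drives the cubic-kernel analysis in \cref{theorem:satisfiability}.
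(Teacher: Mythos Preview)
Your approach is essentially the same as the paper's: both reduce the count of spine literals with a prescribed number of paths to factorial moments $T_j$ (the paper's $\mathbb{E}\,P_n^{(j)}$) via inclusion--exclusion, compute these moments by distinguishing weighted patterns in sum-representation digraphs, and extract asymptotics with \cref{lemma:sp:digraphs}, concluding that the tree-kernel (``almost cubic'') patterns dominate. Your write-up is in fact more systematic than the paper's own proof, which only works out $j=1$ explicitly and waves at $j\geq 2$; your $(\nu,\varphi,\tau)$ bookkeeping and the identification of the caterpillar family make the dominance step more transparent.

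Two minor points. First, the binomial coefficient in your M\"obius inversion should be $\binom{j}{k+1}$ rather than $\binom{j-1}{k}$ (expand $\sum_j T_j(u-1)^j$ and extract $[u^{k+1}]$); this does not affect the leading term since both give coefficient $1$ at $j=k+1$. Second, your sentence ``the target asymptotic \ldots forces $\tau=3j-1$'' reads as assuming the answer; what you actually need (and what your subsequent case analysis supplies) is that among all valid $j$-path configurations the $n$-exponent $\nu+\varphi-2\tau/3 = (\varphi-\nu-2e)/3$ is maximised at $e=-1$, $\varphi=\nu=j$, which is exactly the caterpillar. The paper's proof is no more careful on this point.
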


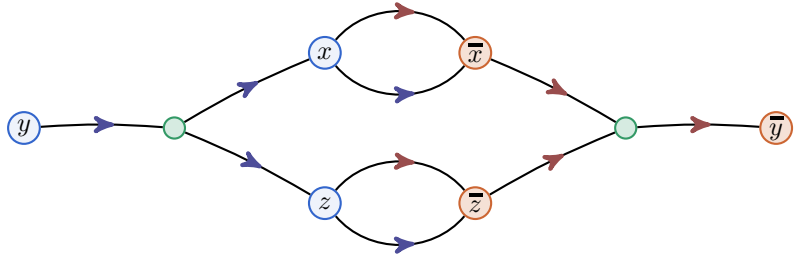
\begin{figure}[hbt]
    \centering
\begin{tikzpicture}[>=stealth',thick, node distance=1.0cm]
\draw
node[arnBleuPetit, minimum height = 1.2em,  thick](x)
                    at (0,1) {\( x \)}
node[arnRougePetit, minimum height = 1.2em, thick](x')
                    at (2,1) {\( \fn x \)}
node[arnBleuPetit, minimum height = 1.2em,  thick](z)
                    at (0,-1) {\( z \)}
node[arnRougePetit, minimum height = 1.2em, thick](z')
                    at (2,-1) {\( \fn z \)}
node[arnVertPetit](w)
                    at (-2,0) {}
node[arnVertPetit](w')
                    at (4,0) {}
node[arnBleuPetit, minimum height = 1.2em,  thick](y)
                    at (-4,0){\( y \)}
node[arnRougePetit, minimum height = 1.2em, thick](y')
                    at (6,0) {\( \fn y \)}
;
\fromto{x}{x'}     [ultra thick,blackred] [thick]
                    [-50][-5][0.6];
\fromto{x}{x'}     [ultra thick,blackblue][thick]
                    [50][5][0.6];
\fromto{z}{z'}     [ultra thick,blackred] [thick]
                    [-50][-5][0.6];
\fromto{z}{z'}     [ultra thick,blackblue][thick]
                    [50][5][0.6];
\fromto{w'}{y'}     [ultra thick,blackred][thick]
                    [-4][-1][0.6];
\fromto{y}{w}     [ultra thick,blackblue] [thick]
                    [-4][-1][0.6];
\fromto{x'}{w'}     [ultra thick,blackred][thick]
                    [-4][-1][0.6];
\fromto{w}{x}     [ultra thick,blackblue] [thick]
                    [-4][-1][0.6];
\fromto{z'}{w'}     [ultra thick,blackred][thick]
                    [-4][-1][0.6];
\fromto{w}{z}     [ultra thick,blackblue] [thick]
                    [-4][-1][0.6];
\end{tikzpicture}
\caption{\label{fig:spine:further}
    One possible configuration of two distinct paths \( y \rightsquigarrow \n y
    \).}
\end{figure}

\begin{proof}
    Let a random variable \( P_n \) denote the number of spine literals \( y
    \) of a random formula \( F \in \mathcal F(n,m) \),
    each variable counted with a multiplicity of the number of paths \( y
    \rightsquigarrow \n y \). We also define \( P_n^{(2)} \) which counts the
    number of spine literals \( y \) counted with the multiplicities of the
    pairs of distinct paths \( y \rightsquigarrow \n y \), and similarly
    \( P_n^{(\ell)} \) for \( \ell \)-tuples of distinct paths.

    The cardinality of the spine \( \mathcal S(F) \),
    \ie the number of literals \( y \) for which there exists at least one such
    path, can be then counted using the variant of the inclusion-exclusion
    approach:
    \begin{equation}
        \mathbb E \mathcal S(F) = \mathbb E P_n
        - \mathbb E P_n^{(2)}
        + \dfrac{1}{2!}
        \mathbb E P_n^{(3)}
        - \dfrac{1}{3!}
        \mathbb E P_n^{(4)}
        + \cdots
        .
    \end{equation}
    In order to prove the theorem, we show that
    \( \mathbb E P_n = \Theta(n^{2/3} \mu^{-2}) \),
    \( \mathbb E P_n^{(2)} = \Theta(n^{2/3} \mu^{-5}) \),
    and, generically,
    \( \mathbb E P_n^{(k)} = \Theta(n^{2/3} \mu^{-3k + 1}) \).

    The expected value of minimal spinal paths can be again counted using
    sum-representations. By distinguishing a pattern \( y \rightsquigarrow x
    \rightsquigarrow \n x \) and counting such graphs with weight \( 2^\ell \)
    where \( \ell \) is the length of the pattern, we are counting the minimal
    spinal paths with multiplicity \( 2 \), as there are \( 2 \) distinct paths
    \( x \rightsquigarrow \n x \).

    Knowing that exclusion of the conflicting edges gives the same result as
    counting the proportions of graphs with a distinguished pattern in simple
    digraphs (see~\cref{subsection:excluding:non:sum:representations}), we pass
    directly to the counting in simple digraphs. The corresponding EGF for
    simple digraphs \( D \in \mathcal D(2n, m) \) with distinguished pattern
    which forms a weakly connected component which is a tree, is then (taking
    into account the compensating factor \( 1/2 \) arising from the multiplicity
    mentioned above)
    \[
        \dfrac{1}{2}
        \Lambda \left(
            \dfrac{U_\to(z)^{2n-m-1}}
            {(2n-m-1)!} e^{V_\to(z)}
            {4 T_\to(z)^3  z^{-2}}
            { (1 - 2 T_\to(z))^2}
        \right)
    \]
    and therefore, by applying~\cref{lemma:sp:digraphs}, we obtain the expected
    value of the dominant term
    \[
        \mathbb E P_n
        \sim
        \dfrac{1}{2} \cdot \dfrac{1}{2n} \cdot 4 \cdot n \cdot 4 \cdot \dfrac{1}{8}
        n^{2/3} \mu^{-2}
        =
        \dfrac{1}{2}
        n^{2/3} \mu^{-2}
        .
    \]
    The above expectation also includes the cases when the excess of the weakly
    connected component containing the pattern \( y \rightsquigarrow x
    \rightsquigarrow \n x \) is greater than \( -1 \), but such cases give
    contribution at most \( O(n^{2/3} \mu^{-5}) \) and therefore are negligible.

    Similarly, by constructing all possible cases of having two distinct paths
    between \( y \) and \( \n y \) in the implication digraph, we note that the
    dominant contributions come only from the case when vertices of degree \(
    3 \) are inserted into the core of the pattern, and therefore the pattern is
    almost cubic (except for the nodes \( y, x \) and \( \n x \), and the pairs
    of complementary literals, where each such pair has a summary degree \( 3
    \)). It is easy to show that such components contribute a factor
    \( O(n^{2/3} \mu^{-5}) \) (one example of such configuration is given in~\cref{fig:spine:further}).
    Considering spine literals of further complexity
    gives the next orders of asymptotics. This observation finishes the proof.
\end{proof}

\begin{corollary}
    \label{corollary:spine}
    By removing on average a \( \Theta(|\mu|^{-3}) \) proportion of the spine
    literals, the spine breaks down into non-intersecting tree-like components,
    viz.~\cref{fig:spine:tree}. For each component there exists a distinct
    literal \( x \) such that every literal \( y \) from this component has a
    unique path to \( x \), and the path \( x \rightsquigarrow \n x\) is unique
    and strictly distinct.
\end{corollary}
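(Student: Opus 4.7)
The plan is to combine \cref{theorem:spine} with \cref{lemma:minimal:spinal:path} to extract a canonical tree-like decomposition of the spine after deletion of a negligible subset. Call a spine literal $y$ \emph{good} if there is a unique directed path $y \rightsquigarrow \n y$ in the implication digraph, and \emph{bad} otherwise. Summing the estimates of \cref{theorem:spine} over $k \geq 2$, the expected number of bad spine literals is
\begin{equation*}
\sum_{k \geq 2} C_k\, n^{2/3} |\mu|^{-2-3k} = O(n^{2/3} |\mu|^{-5}),
\end{equation*}
which is an $O(|\mu|^{-3})$ fraction of the total expected spine size $\tfrac{1}{2} n^{2/3}|\mu|^{-2}$. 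Remove all bad literals from the spine.

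Next, apply \cref{lemma:minimal:spinal:path} to each surviving good literal $y$ to obtain a pivot $x(y)$ and the decomposition $y \rightsquigarrow x(y) \rightsquigarrow \n x(y) \rightsquigarrow \n y$, with the core $x(y) \rightsquigarrow \n x(y)$ strictly distinct. Since $y$'s path $y \rightsquigarrow \n y$ is unique and the proof of \cref{lemma:minimal:spinal:path} picks $x(y)$ canonically along this path (the earliest vertex whose complementary is also on the path and which admits a strictly distinct core segment), the pivot $x(y)$ is well-defined. Partition the set of good spine literals into equivalence classes by the relation $y_1 \sim y_2$ iff $x(y_1) = x(y_2)$; these will be the tree-like components of the statement.

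For a fixed pivot $x$, the uniqueness of each $y$'s path forces all paths $y \rightsquigarrow x$ within the class to merge consistently: if two such paths were to split and then re-merge before reaching $x$, a second distinct path $y \rightsquigarrow \n y$ would be produced by swapping the diverging segments, contradicting $y$ being good. Hence the union of these paths is a directed tree converging at $x$, as in the blue side of \cref{fig:spine:tree}; the red side $\n x \rightsquigarrow \n y$ is its edge-by-edge complementary tree, and the core $x \rightsquigarrow \n x$ (together with its complementary copy) supplies the central arc.

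The step I expect to require the most care is disjointness of trees belonging to different pivots. I would argue as follows: suppose $z$ is a good spine literal lying on the path $y \rightsquigarrow x(y)$ for some good $y$. Taking the suffix of $y$'s unique path starting at $z$ produces the path $z \rightsquigarrow x(y) \rightsquigarrow \n x(y) \rightsquigarrow \n z$ (the final segment existing by complementarity of $y \rightsquigarrow z$), and by uniqueness for $z$ this must be the only $z \rightsquigarrow \n z$ path. Re-running the minimality selection of \cref{lemma:minimal:spinal:path} on this suffix then identifies $x(z) = x(y)$, so $z$ sits in the same equivalence class as $y$, giving disjointness across distinct pivots. The main subtlety lies precisely here, since one must verify that uniqueness is inherited under taking suffixes and that the canonical pivot choice is compatible with truncation; once this is established, the uniqueness of $x(y) \rightsquigarrow \n x(y)$ inside the component is automatic, for any second strictly distinct core would, composed with $y \rightsquigarrow x$ and $\n x \rightsquigarrow \n y$, violate the uniqueness of $y$'s path.
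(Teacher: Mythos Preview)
The paper states this corollary without proof, presenting it as an immediate consequence of \cref{theorem:spine}; your write-up is therefore more detailed than anything the paper offers, and the overall strategy---remove the literals with multiple paths, assign to each survivor the canonical pivot furnished by \cref{lemma:minimal:spinal:path}, and group by pivot---is exactly the intended reading.

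Two small remarks. First, a bookkeeping slip: quoting the formula $C_k\,n^{2/3}|\mu|^{-2-3k}$ from the statement of \cref{theorem:spine} and summing over $k\geq 2$ gives a leading term $|\mu|^{-8}$, not $|\mu|^{-5}$. Your $|\mu|^{-5}$ in fact matches the exponent $|\mu|^{-3k+1}$ appearing in the \emph{proof} of \cref{theorem:spine} (the theorem's displayed exponent seems to be off by one in $k$); either way the $\Theta(|\mu|^{-3})$ proportion follows, so the conclusion is unaffected. Second, the disjointness step you flag as delicate does go through as you outline: for a good literal $z$ on the segment $y\rightsquigarrow x(y)$, the path $z\rightsquigarrow x(y)\rightsquigarrow \n{x(y)}\rightsquigarrow \n z$ is forced to be $z$'s unique path, and re-running the minimal-index selection of \cref{lemma:minimal:spinal:path} on it returns the same pivot because every earlier candidate $a_j$ has $\n{a_j}$ also on the path, so the segment $a_j\rightsquigarrow \n{a_j}$ is never strictly distinct until one reaches $a_k=x(y)$. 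One point worth making explicit is that the pivot $x$ itself has two paths $x\rightsquigarrow \n x$ (the core arc and its complementary), hence is bad and is among the removed literals; this is consistent with the picture in \cref{fig:spine:tree}, where $x$ is the boundary of the tree rather than an interior leaf.
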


\section{Conclusions and open problems}
\label{section:open:problems}

While the analysis of random graphs and the curve of their phase transition have
been described in details in~\cite{janson1993birth}, there is a certain obstacle
which doesn't immediately allow to go inside the critical phase
of 2-SAT phase transition using the inclusion-exclusion approach.
In~\cite{collet2018threshold}, a notion of a
\emph{patchwork} has been specifically designed for a very similar reason.
However, their approach also doesn't suggest any explicit way for obtaining such
patchworks. These considerations give rise to the following questions.

\begin{problem}
    \label{problem:enumeration:of:cscc}
    What is the number of cubic strongly connected contradictory multigraph
    components with given excess? How many minimal contradictory (cubic)
    multigraphs implication multigraphs are there?
\end{problem}

A variation of the above question, directly related to the inclusion-exclusion
method, leads to the following question.

\begin{problem}
    Using a version of~\cite[Lemma 3]{janson1993birth} when \( |\mu| = \Theta(1)
    \), is it possible to express the probability of
    satisfiability as a \emph{converging} sum of Airy functions \( A(y, \mu) \)?
\end{problem}

Some of the simulations suggested that the introduced notion of the excess
correctly captures the discrete nature of the phase transition. According to the
results, the distribution of the excess is discrete for finite \( \mu \) and
doesn't depend on \( n \) in the limit.

\begin{conjecture}
    \label{conjecture:discrete:excess}
Let \( \xi \) denote the total excess of the strongly connected contradictory
component in a random implication digraph corresponding to a formula \( F \)
chosen uniformly at random from \( \mathcal F(n, m) \).
If \( \mu \) is constant and \( m = n (1 + \mu n^{-1/3}) \), the distribution of
\( \xi \) is discrete. Moreover, when \( \mu \) is constant, the number of
strongly connected components of the contradictory component digraph follows a
discrete limiting law; when \( \mu \to +\infty \), the contradictory component
is strongly connected with high probability.
\end{conjecture}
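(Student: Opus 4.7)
The conjecture has three parts, and for the first two — discreteness of the total excess $\xi$ and of the number of contradictory strongly connected components in the critical window $\mu = \Theta(1)$ — my plan is to continue the analysis of~\cref{theorem:satisfiability} into that window. I would keep the inclusion-exclusion identity
\[
    \mathbb{P}(\xi = r) = \sum_{k \ge 0} \frac{(-1)^k}{k!}\, \mathbb{E}\,\xi_r(\xi_r-1)\cdots(\xi_r-k+1)
\]
of~\cref{lemma:pie:sat}, representing each factorial moment as the $[z^{2n}]$-coefficient of a product of the EGFs $U_\to(z)^{2n-m-j}/(2n-m-j)!$, $e^{V_\to(z)}$, and $P(T_\to(z))/(1-2T_\to(z))^{q}$ wrapped in suitable $\Lambda$-operators, exactly as in the subcritical proof. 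For $\mu = \Theta(1)$ the saddle coalesces with the singularity $T_\to = 1/2$, and I would replace~\cref{lemma:sp:digraphs} by an Airy-type saddle analysis in the spirit of~\cite{flajolet2004airy,banderier2001random,janson1993birth}, expecting termwise convergence $\mathbb{P}(\xi = r) \to c_r(\mu)$ with $c_r(\mu)$ equal to the sum of compensation factors of cubic contradictory kernels of excess $r$ evaluated against an Airy integral. For the distribution of the number of contradictory SCCs, I would mark each such component with a formal variable $v$; by~\cref{remark:set:cscc} the SCCs are pairwise incomparable under reachability, so the EGF for $k$ of them factors as $\frac{1}{k!}$ times a $\Lambda$-integrated $k$-th power of the EGF of a single contradictory SCC, and the marginal in $v$ delivers the second claim.

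For the third part — strong connectedness of the contradictory component in the supercritical regime $\mu \to +\infty$ — the inclusion-exclusion method no longer applies, and my plan is to borrow the sprinkling and uniqueness arguments used for the giant strongly connected component of a random digraph (\cite{luczak2009critical,pittel2017birth}) and to combine them with the involution $x \mapsto \n x$. The key observation is that once an implication digraph contains a strongly connected component $\mathcal{C}$ of size $\Omega(n)$, closing $\mathcal{C}$ under the involution and applying~\cref{remark:set:cscc} forces $\mathcal{C}$ and its complement $\n{\mathcal{C}}$ to lie inside a single contradictory SCC. Reducing uniqueness to a first-moment estimate on pairs of disjoint large SCCs, controlled via the standard branching-process approximation of the implication digraph above the threshold, should then complete the proof.

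The principal obstacle is the Airy-type saddle analysis in the critical window. Unlike the random-graph setting, the 2-SAT implication digraph carries the involution $x \mapsto \n x$, which introduces the contradictory-specific compensation factors of~\cref{subsection:compensation:factors}; one must verify that the Airy integrand, weighted by these double-factorial compensation factors and integrated along the appropriate contour, still produces a summable limiting series. A second obstacle is the explicit evaluation of the constants $c_r(\mu)$, which requires resolving~\cref{problem:enumeration:of:cscc} — the enumeration of cubic strongly connected contradictory multigraphs of given excess — left open in the paper. Finally, interpolating cleanly between the subcritical, critical, and supercritical regimes calls for a uniform remainder bound derived from the refined saddle expansion of~\cref{section:full:asymptotic:expansion}, suitably adapted to the involutive directed setting.
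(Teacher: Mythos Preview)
The statement you are addressing is \emph{Conjecture~\ref{conjecture:discrete:excess}}, and the paper offers no proof of it; it is listed among the open problems, immediately followed by the question of whether the probability of satisfiability for $|\mu|=\Theta(1)$ can be expressed as a \emph{converging} sum of Airy functions. So there is nothing in the paper to compare your argument against, and your write-up is, appropriately, a research plan rather than a proof.

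For the first two parts your plan is precisely the direction the paper gestures toward: push the inclusion--exclusion of~\cref{lemma:pie:sat} and the EGF machinery of~\cref{theorem:satisfiability} into the coalescing-saddle regime and read off Airy-type limits for each excess $r$. You have correctly identified the two genuine obstructions. First, summability: in the subcritical phase each fixed $r$ contributes $\Theta(|\mu|^{-3r})$ and the series is manifestly convergent, but at $\mu=\Theta(1)$ every excess contributes at the same scale, and whether the compensation-weighted Airy series converges is exactly the open problem the paper states right after the conjecture. Second, the constants $c_r(\mu)$ require the enumeration of cubic contradictory kernels of excess $r$, which is \cref{problem:enumeration:of:cscc}. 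Neither obstacle is resolved by your outline, so this part remains a plan, not a proof.

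For the supercritical part there is a real gap beyond the ones you flag. Your appeal to~\cref{remark:set:cscc} is misplaced: that remark says distinct contradictory SCCs are mutually unreachable, which does not by itself force a large SCC $\mathcal C$ and its image $\n{\mathcal C}$ to merge. What you actually need is the uniqueness of the \emph{giant} SCC in the implication digraph (so that $\mathcal C=\n{\mathcal C}$, hence $\mathcal C$ is contradictory), and you still use sprinkling for that --- fine. But even granting a unique giant contradictory SCC, the conjecture asserts that the contradictory component is a \emph{single} SCC, and your first-moment bound on pairs of disjoint \emph{large} SCCs does nothing to exclude additional \emph{small} contradictory SCCs (short directed circuits through some $x$ and $\n x$) disjoint from the giant one. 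Ruling those out is a separate estimate --- one must show that whp every literal $x$ with $x\rightsquigarrow\n x\rightsquigarrow x$ already lies in the giant SCC --- and it does not follow from the digraph results you cite, since the implication digraph is not a uniform random digraph but carries the involution.
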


When \( \mu \to +\infty \), the expected value of the excess of the complex
component in simple graphs is known and is \( \frac{2}{3} \mu^3 \). Some
simulations suggest that exactly the same asymptotics may hold
when \( \mu \to \infty \) for the expectated excess of the contradictory
component.

\begin{problem}
    When \( m = n (1 + \mu n^{-1/3}) \) and \( \mu \to +\infty \), what is the
    expected excess of the contradictory component of a random 2-CNF formula?
\end{problem}

One of the motivations to study the phase transition in \( 2 \)-SAT is its
similarity to the phase transition of the appearance of the strongly connected
component in directed graphs. The papers~\cite{luczak2009critical,pittel2017birth}
seem to come the closest to resolving the question, but is it possible to give
the exact description?

\begin{problem}
    It is possible to describe a ``giant strong component'' of a critical
    directed graph having \( n \) vertices and \( m = n(1 + \mu n^{-1/3}) \)
    oriented edges in the terms of cubic components and their excesses?
    Is it then possible to express the probability of having a strong component of
    excess \( r \) in terms of Airy function depending on \( r \)?
\end{problem}

One of the applications of analysis with the help of generating functions, is
the study of \( 2 \)-SAT with a given set of degree constraints, similarly
to~\cite{de2016graphs} and~\cite{dovgal2018shifting}. It is clear that the
same analysis can be done for the case of formulae with literal set degree
constraints, however the present analysis is done only for the subcritical
phase.

\begin{conjecture}
    The point \( r \) of the phase transition in the 2-SAT model with literal
    degree constraints from a set \( \Delta = \{ d_1, d_2, \ldots \} \)
    (possibly weighted) can be computed from the system of equations
    \begin{align}
        z \dfrac{\omega''(z)}{\omega'(z)} = 1,
        \quad
        z \dfrac{\omega'(z)}{\omega(z)} = r,
    \end{align}
    where \( \omega(z) := \sum_{d \in \Delta} \dfrac{z^d}{d!} \) and \( \Delta
    \) satisfies the condition \( 1 \in \Delta \).
\end{conjecture}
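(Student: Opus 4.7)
The plan is to extend the symbolic and saddle-point framework of \cref{section:generating:functions,section:asymptotic:analysis} to the degree-constrained setting by replacing the exponential series \( e^z \), which encodes unrestricted degree choices at each literal in the standard model, with the weighted series \( \omega(z) \) specified in the statement. First I would construct the EGF \( T_{\omega}(z) \) for the analogue of directed Cayley trees in the implication digraph, in which every literal has its degree constrained to lie in \( \Delta \). By the standard labelled tree construction, \( T_\omega \) satisfies an implicit functional equation of the form \( T_\omega(z) = z\, \omega'(T_\omega(z)) \), up to a factor of \( 2 \) coming from edge orientations, analogous to the identity \( T_\to(z) = \tfrac12 T(2z) \) used throughout the paper. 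The assumption \( 1 \in \Delta \) ensures that leaves are permitted, so the functional equation admits a genuine power series solution with positive radius of convergence.

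Next I would carry out the singularity analysis of \( T_\omega \) via the smooth implicit-function schema. Implicit differentiation gives \( T_\omega'(z) = \omega'(T_\omega(z))/(1 - z\,\omega''(T_\omega(z))) \), so the dominant singularity \( z_0 \) is the unique positive value for which \( z_0\, \omega''(\tau) = 1 \), where \( \tau := T_\omega(z_0) \). Combining this with the defining relation \( \tau = z_0\, \omega'(\tau) \) eliminates \( z_0 \) and yields \( \tau\, \omega''(\tau)/\omega'(\tau) = 1 \), which is the first of the two conjectured equations (with the variable \( z \) in the statement playing the role of the singular value \( \tau \)).

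The second step is to replicate the construction of EGFs for forests of degree-constrained trees and unicycles — the analogue of \( e^{V_\to(z)} U_\to(z)^{2n-m}/(2n-m)! \) — and to derive the degree-constrained version of \cref{lemma:sp:digraphs}. The phase transition occurs at the value of \( m/n \) for which the saddle point in the coefficient-extraction integral coincides with the dominant singularity of \( T_\omega \); a standard computation identifies this critical ratio as the mean degree of a typical literal under the Boltzmann-sampled distribution with parameter \( \tau \), namely \( r = \tau\, \omega'(\tau)/\omega(\tau) \), which is precisely the second conjectured equation. The classical unconstrained case \( \omega(z) = e^z \) gives \( \tau = 1 \) and \( r = 1 \), matching the known 2-SAT threshold \( m = n \), which is a useful sanity check.

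The main obstacle is adapting the sum-representation framework of \cref{section:sum:representation} to the degree-constrained setting: the degree constraint imposed on a literal \( x \) entails a dual constraint on \( \n x \), because every edge \( x \to y \) is paired with its complementary \( \n y \to \n x \), so the EGF must weight the two endpoints of every complementary pair consistently. This will likely require introducing a bivariate generating function in which the degree contributions at \( x \) and at \( \n x \) are tracked by separate marking variables and then matched through a Lagrange-inversion-type identity. Verifying that the saddle-point analysis of \cref{lemma:sp:digraphs} remains valid once this coupling is installed — in particular that the leading singular exponents continue to produce the desired transition behaviour — is the conceptual crux of the argument; once this is in place, the conjectured system of equations should follow by matching the singular exponents on both sides of the resulting coefficient asymptotics.
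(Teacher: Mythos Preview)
The statement you are attempting to prove is labelled as a \emph{Conjecture} in the paper's concluding section on open problems, and the paper provides no proof whatsoever --- it is offered explicitly as an unresolved question, motivated by analogy with the degree-constrained graph models in the cited works~\cite{de2016graphs,dovgal2018shifting}. There is therefore no ``paper's own proof'' against which to compare your proposal.

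That said, your sketch is a reasonable heuristic outline of how one would expect such a result to be approached: replacing \( e^z \) by \( \omega(z) \) in the tree equation, locating the singularity via the implicit-function schema, and identifying the critical ratio as a Boltzmann mean degree are exactly the steps one carries out in the analogous undirected problem. You have also correctly identified the genuine difficulty, namely that the degree constraint on a literal \( x \) is entangled with that on \( \n x \) through the complementary-edge pairing in the sum-representation, and this coupling is not handled by the standard degree-constrained machinery for simple graphs. Resolving that coupling rigorously --- and verifying that the saddle-point lemma survives it --- is precisely what would turn the conjecture into a theorem, and the paper gives no indication of how to do so. Your proposal should therefore be read as a plausible strategy toward an open problem, not as a proof.
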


\subsection*{Acknowledgements.} The author is indebted to \'{E}lie de
Panafieu and Vlady Ravelomanana for uncountable very inspiring discussions and
for their support and encouragement.

{\footnotesize
\bibliography{birth-contradictory}
}

\appendix

\section{Complete asymptotic expansion for the saddle point lemma}
\label{section:full:asymptotic:expansion}
Let us show how a complete asymptotic expansion in powers of \( \mu^{-3} \) can
be obtained in~\cref{lemma:sp:integration}. In~\cite{janson1993birth}, the
following formulation is given.
\begin{lemma}[{\cite[Lemma 3]{janson1993birth}}]
    \label{lemma:original:contour:integration}
    If \( m = \tfrac12 (1 + \mu n^{-1/3}) \) and \( y \) is any real constant,
    \( \mathcal{MG}(n,m) \) is the set of all labelled multigraphs with \( n \)
    vertices and \( m \) edges, then, as \( \mu \to -\infty \) with \( n \)
    while remaining \( |\mu| \leq n^{1/12} \),
    \[
        \dfrac{n!}{|\mathcal{MG}(n,m)|}
        [z^n] \dfrac{U(z)^{n-m}}{(n-m)! (1 - T(z))^y}
        = \sqrt{2 \pi} A(y, \mu) n^{y/3 - 1/6}
        \Big(
            1 + O(\mu^4 n^{-1/3})
        \Big)
        ,
    \]
    where
    \[
        A(y, \mu) =
        \dfrac{1}{\sqrt{2 \pi} |\mu|^{y - 1/2}}
        \left(
            1 - \dfrac{3 y^2 + 3 y - 1}{6 |\mu|^3}
            + O(\mu^{-6})
        \right)
        .
    \]
\end{lemma}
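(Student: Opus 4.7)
The plan is to represent the coefficient by Cauchy's formula, make the standard tree-function substitution, and then push the Laplace expansion around the resulting saddle point to successive orders in $|\mu|^{-3}$.

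First I would apply the classical substitution $z = T e^{-T}$, which linearises the tree function via $T(T e^{-T}) = T$ and has Jacobian $dz = (1-T) e^{-T}\,dT$. Using $U(z) = T(1 - T/2)$ and $1 - T(z) = 1 - T$, the coefficient becomes
\begin{equation*}
    [z^n]\frac{U(z)^{n-m}}{(n-m)!(1-T(z))^y}
    \;=\;
    \frac{1}{(n-m)!}\cdot\frac{1}{2\pi i}\oint
    \frac{(1 - T/2)^{n-m}\,e^{nT}}{T^{m+1}(1 - T)^{y-1}}\,dT,
\end{equation*}
a contour integral with large exponent $n\,h(T) - \tfrac12 \mu n^{2/3}\,k(T)$, where $h(T) = \tfrac12\log(1 - T/2) + T - \tfrac12\log T$ and $k(T) = \log T + \log(1 - T/2)$, multiplied by the algebraic prefactor $G(T) = (1-T)^{1-y}/T$.

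Next I would locate the saddle. Expanding in $u = T - 1$ one obtains the clean series
\begin{equation*}
    h(T) - h(1) = -\sum_{j \geq 1} \frac{u^{2j+1}}{2j+1},
    \qquad
    k(T) - k(1) = -\sum_{j \geq 1} \frac{u^{2j}}{j},
\end{equation*}
so $h$ has a degenerate inflection at $T = 1$ with leading cubic $-u^3/3$ and \emph{no} quartic term, while $k$ starts with a pure quadratic $-u^2$. The rescaling $u = t\,n^{-1/3}$ turns the dominant part of the exponent into $-t^3/3 + \mu t^2/2$, whose non-trivial saddle at $t = \mu$ (that is, $T^\star = 1 + \mu n^{-1/3}$) governs the integral in the subcritical regime. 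A routine steepest-descent computation along the perpendicular direction $t = \mu + i\tau$ produces the Gaussian factor $\sqrt{2\pi/|\mu|}$; together with the analogous saddle-point asymptotics of the normalising constant $|\mathcal{MG}(n,m)|$, the exponentially small factors $e^{n h(T^\star)}$ cancel and leave the stated scale $\sqrt{2\pi}\, n^{y/3 - 1/6}\,|\mu|^{1/2 - y}$.

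For the complete expansion in $|\mu|^{-3}$, I would extend the Taylor expansion of every ingredient to the order required: the exponent picks up $-u^5 n/5 - u^7 n/7 - \cdots$ from $h$ and $\mu n^{2/3}(u^4/4 + u^6/6 + \cdots)$ from $k$; the algebraic prefactor $G(T)$ expands in $u$ around $u = 0$ and also contributes corrections via the factor $(-u)^{1-y}$ after the rescaling; and the further change of variable $t = \mu(1 + w)$, appropriate near $t = \mu$ in the Laplace regime $\mu \to -\infty$, turns every correction into a Gaussian moment of $w$. Each such moment supplies a definite power of $|\mu|^{-3}$. The first-order correction, once fully assembled, yields exactly the coefficient $-(3y^2 + 3y - 1)/6$ of $|\mu|^{-3}$ in $A(y,\mu)$; the same scheme produces every subsequent coefficient algorithmically. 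The main obstacle I expect is not the Laplace expansion itself, which is mechanical once the saddle is isolated, but rather the uniformity of the relative error $O(\mu^4 n^{-1/3})$ across the whole range $|\mu| \leq n^{1/12}$: one must check that the contour can be chosen so that the portion away from $T^\star$ is exponentially smaller with constants independent of $\mu$, and that each intermediate asymptotic step preserves the claimed relative error. A secondary, purely combinatorial challenge is the bookkeeping of the Laplace moments at higher orders, which can be automated via generating-function identities rather than term-by-term calculation.
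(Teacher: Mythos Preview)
Your approach is correct and is essentially the standard saddle-point derivation that leads to the result. Note, however, that the paper does \emph{not} prove this lemma itself: it is quoted from \cite{janson1993birth}, and the appendix only explains how to extract the full asymptotic series for $A(y,\mu)$ once the integral representation
\[
    A(y,\mu)=\frac{1}{2\pi\alpha^{y-1/2}}\int_{-\infty}^{\infty}\Bigl(1+\tfrac{it}{\alpha^{3/2}}\Bigr)^{1-y}e^{-t^{2}/2-\,it^{3}/(3\alpha^{3/2})}\,dt,\qquad\alpha=-\mu,
\]
has already been established in that reference. Your Cauchy-integral computation with the tree substitution $z=Te^{-T}$ and the rescaling $u=tn^{-1/3}$ is precisely the route by which Janson et al.\ arrive at this representation, so you are effectively reconstructing the cited proof rather than taking a different path.

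Where you do diverge slightly is in the mechanics of the expansion. The paper's appendix keeps the Gaussian weight $e^{-t^{2}/2}$ intact, introduces $\beta=i\alpha^{-3/2}$, expands $(1+\beta t)^{1-y}e^{-\beta t^{3}/3}$ as a formal power series in $\beta$, and integrates term by term against standard Gaussian moments $(2k-1)!!$; odd powers of $\beta$ vanish and even powers give $c_{2r}(y)\,|\mu|^{-3r}$ directly. Your plan instead shifts to the true saddle via $t=\mu(1+w)$ and computes Gaussian moments in $w$. Both schemes are valid and yield the same coefficients, but the paper's $\beta$-expansion is computationally tidier: each coefficient $c_{2r}(y)$ comes out as a single finite sum of binomials and double factorials, whereas your shifted variable requires re-expanding the cubic and the prefactor around $w=0$ at every order. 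If your goal is algorithmic computation of many terms, the paper's organisation is worth adopting.
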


\noindent
\textbf{Complete asymptotic expansion.}
In the proof, it is mentioned that in principle, it is possible to obtain a
complete asymptotic series of \( |\mu|^{-3} \). Let us describe the procedure
that can be used to compute these coefficients.

Let \( \alpha = - \mu \). As shown in the proof of~\cite[Lemma
3]{janson1993birth}, the function \( A(y, \mu) \) can be represented in the form
\[
    A(y,\mu) =
    \dfrac{1}{2 \pi \alpha^{y - 1/2}}
    \int_{-\infty}^{\infty}
    \left(
        1 + \dfrac{it}{\alpha^{3/2}}
    \right)^{1-y}
    e^{-t^2/2 - it^3 / (3 \alpha^{3/2})}
    dt
    .
\]
In order to express \( A(y,\mu) \) in the form of a complete asymptotic
expansion, we introduce \( \beta := i\alpha^{-3/2} \) and obtain:
\[
    A(y,\mu)
    =
    \dfrac{1}{\sqrt{2 \pi} |\mu|^{y - 1/2}}
    \int_{-\infty}^{+\infty}
    \left(
        1 + \beta t
    \right)^{1-y}
    e^{ - \beta t^3 / 3 }
    dt
    \sim
    \dfrac{1}{2 \pi |\mu|^{y - 1/2}}
    \sum_{r \geq 0} c_r(y) \beta^r
    ,
\]
where \( (c_r(y))_{r=0}^\infty \) are polynomials in \( y \).
The coefficient \( [\beta^k]
    \left(
        1 + \beta t
    \right)^{1-y}
    e^{ - \beta t^3 / 3 }
\) can be expressed as the convolution of two generating functions
\[
    [\beta^r]
    \left(
        1 + \beta t
    \right)^{1-y}
    e^{ - \beta t^3 / 3 }
    =
    \sum_{k = 0}^r
    t^k
    { 1 - y \choose k}
    \dfrac{(- t^3 / 3)^{r-k}}{(r-k)!}
    =
    \sum_{k = 0}^r
    \dfrac{(-1/3)^{r-k}}{(r-k)!}
    t^{3r - 2k}
    { 1 - y \choose k}
\]
A formal term by term integration (the series is most likely not convergent, but
the expansion could be extended up to \( r \)th term for any finite \( r \))
yields for even \( r \)
\[
    c_{2r}(y) =
    \sqrt{2 \pi}
    \sum_{k=0}^{2r}
    \dfrac{(-1/3)^{2r-k}}{(2r - k)!}
    {1 - y \choose k}
    \int_{-\infty}^{\infty}
    e^{-t^2/2}
    t^{6r - 2k}
    dt
    =
    \sum_{k=0}^{2r}
    \dfrac{(-1/3)^{2r-k}}{(2r - k)!}
    {1 - y \choose k}
    (6r - 2k - 1)!!
    ,
\]
where the double factorial notation is used \( (2n-1)!! := 1 \cdot 3 \cdot
\cdots \cdot (2n-1) \); for odd \( r \) the principal value of the integral
equals zero, and so, \( c_{2r + 1}(y) = 0 \).

As an example, the first nontrivial term \( c_2(y) \) can be computed as
\[
    c_{2}(y) = \dfrac{1/9}{2!} \cdot 5!!
    - \dfrac{1/3}{1!} \cdot (1 - y) \cdot 3!!
    + \dfrac{(1 - y)(-y)}{2} \cdot 1!!
    = \dfrac{3 y^2 + 3y - 1}{6}
\]
and a factor \( -1 \) in the terms \( c_{4r + 2}(y) \) in the expansion of \(
A(y,\mu) \) appears because a multiple \( i^{4r + 2} = -1 \)
should be extracted from \( \beta = i \alpha^{-3/2} \).

\end{document}